\ProcessOptions \RequirePackage{amsmath}
\def\ep{\varepsilon}
\newcommand{\D}{\mathbb{D}}
\newcommand{\Fpa}{\mathcal{F}^p_{\alpha}}
\newcommand{\Lpaom}{\mathcal{L}^p_{\alpha,\om}}
\newcommand{\Fpaom}{\mathcal{F}^p_{\alpha,\om}}
\newcommand{\Lpa}{\mathcal{L}^p_{\alpha}}
\newcommand{\N}{\mathbb{N}}
\newcommand{\C}{\mathbb{C}}
\newcommand{\Z}{\mathbb{Z}}
\newcommand{\R}{\mathbb{R}}
\newcommand{\vp}{\vp}
\newcommand{\Ap}{A^{restricted}_{p}}
\newcommand{\Ainfty}{A^{restricted}_{\infty}}
\newcommand{\Apo}{A^{restricted}_{p_0}}
\newcommand{\Aq}{A^{restricted}_{q}}
\newcommand{\Apprime}{A^{restricted}_{p'}}
\def\a{\alpha}       \def\b{\beta}        \def\g{\gamma}
           \def\e{\varepsilon} 
     \def\om{\omega}      
                  \def\z{\zeta}
\newtheorem{theorem}{Theorem}[section]
\newtheorem{lemma}[theorem]{Lemma}
\newtheorem{proposition}[theorem]{Proposition}
\newtheorem{corollary}[theorem]{Corollary}
\newtheorem{remark}[theorem]{Remark}
\newtheorem{lettertheorem}{Theorem}
\newtheorem{letterlemma}[lettertheorem]{Lemma}
\newtheorem{letterproposition}[lettertheorem]{Proposition}
\theoremstyle{definition}
\newtheorem{definition}[theorem]{Definition}
\newenvironment{Pf}{\noindent{\emph{Proof of}}}{$\hfill\Box$ }
\numberwithin{equation}{section}
\begin{document}
\title[Weighted Fock spaces]
{Littlewood-Paley formulas and Carleson measures for weighted Fock
spaces induced by $A_\infty$-type weights}

\author[C. Cascante]{Carme Cascante}
\address{Carme Cascante\\Departament de Matem\`{a}tica Aplicada i Analisi\\
Universitat de Barcelona\\
Gran Via 585, 08007 Barcelona\\
Spain} \email{cascante@ub.edu}

\author[J.  F\`{a}brega]{Joan  F\`{a}brega}
\address{Joan  F\`{a}brega \\Departament de Matem\`{a}tica Aplicada i Analisi\\
Universitat de Barcelona\\
Gran Via 585, 08007 Barcelona\\
Spain}
\email{joan$_{-}$fabrega@ub.edu}

\author[J. A. Pel\'aez]{Jos\'e A. Pel\'aez}
\address{Jos\'e A. Pel\'aez\\Departamento de An\'alisis Matem\'atico\\
Facultad de Ciencias\\
29071, M\'alaga\\
Spain. } \email{japelaez@uma.es}

%\thanks{}
\date{\today}

\thanks{The research of the first two authors was supported in part by
	Ministerio de Econom\'{\i}a y Competitividad, Spain,   projects MTM2014-51834-P and MTM2015-69323-REDT, and Generalitat de Catalunya, project
	2014SGR289.\newline
	The research of the third author was supported in part by
	Ministerio de Econom\'{\i}a y Competitividad, Spain,   projects
	MTM2014-52865-P, and MTM2015-69323-REDT; La Junta de Andaluc{\'i}a,
	project FQM210}

\subjclass[2010]{30H20, 42B25, 46E35}

\keywords{Fock spaces,
Littlewood-Paley formula, Carleson measures, pointwise multipliers}

\begin{abstract}
	We obtain Littlewood-Paley formulas for  Fock spaces $\mathcal{F}^q_{\beta,\omega}$  induced by weights
 $\omega\in\Ainfty= \cup_{1\le p<\infty}A^{restricted}_{p}$, where $A^{restricted}_{p}$ is the class of weights such that
 the Bergman projection $P_\alpha$, on the classical Fock space $\mathcal{F}^2_{\alpha}$, is bounded on

$$\mathcal{L}^p_{\alpha,\om}:=\left\{f:\, \int_{\C}|f(z)|^pe^{-p\frac{\a}{2}|z|^2}\,\om(z)dA(z)<\infty \right\}. $$
	Using these equivalent norms for $\mathcal{F}^q_{\beta,\omega}$  we
	characterize the  Carleson measures for weighted Fock-Sobolev spaces $\mathcal{F}^{q,n}_{\beta,\om}$.
\end{abstract}
\maketitle

\section{Introduction}

\par Let $\C$ be  the complex plane
and denote by $H(\C)$ the space of entire functions.  For
$0<p,\a<\infty$, let $\Lpa$ be  the space of measurable functions such that
$$ \|f\|^p_{\Lpa}:=\frac{p\a}{2\pi}\int_{\C}|f(z)|^pe^{-p\frac{\a}{2}|z|^2}\,dA(z)<\infty,$$
where $dA$ denotes the Lebesgue measure in $\C$. The
classical Fock
spaces  $\Fpa$ consists of the $f\in H(\C)$ such that  $f\in\Lpa$ \cite{tung,ZhuFock}. A function $\om:\C\to [0,\infty)$ is a weight if $\om$ belongs
to $L^1_{loc}(\C,dA)$. In this paper, we will deal
  with the weighted  spaces
of measurable functions,
$$\Lpaom:=\left\{f:\, \|f\|^p_{\Lpaom}=\int_{\C}|f(z)|^pe^{-p\frac{\a}{2}|z|^2}\,\om(z)dA(z)<\infty \right\}, $$
and the weighted Fock spaces $\Fpaom=\Lpaom\cap H(\C)$.
\par The
  orthogonal projection  $P_\a:\mathcal{L}^2_\a\to \mathcal{F}^2_\a$
 coincides with the integral operator
\begin{equation*}
P_\a(f)(z):= \frac{\a}{\pi}\int_{\C}
f(\z)e^{\a\overline{\z}z}e^{-\a|\z|^2}\,dA(\z), \quad f\in
\mathcal{L}^2_\a.
\end{equation*}
The boundedness of projections on $L^p$-spaces   is a classical and interesting  topic,
with a flurry of activity in the recent years,
  which has plenty of applications on
operator theory \cite{AlCo,BB,IsraJOT2014,PelRatproj,Zhu,ZhuFock}. It is known that $P_\a$, and
the positive linear operator
\begin{equation*}\begin{split}
P^+_\a(f)(z) &:= \frac{\a}{\pi}\int_{\C}
f(\z)|e^{\a\overline{\z}z}|e^{-\a|\z|^2}\,dA(\z),
\quad
f\in \mathcal{L}^2_\a,
 \end{split}\end{equation*}
are  bounded on the classical $L^p(\C,
e^{-\a|\cdot|^2}\,dA)$ if and only if $p=2$.
However they are bounded  on
$\Lpa$ for any $p\ge 1$, \cite[Theorem~2.20]{ZhuFock} (see also
\cite{tung}).
Recently, the weights such that $P_\alpha$ is bounded on $\Lpaom$ have been
described in
\cite[Theorem~3.1]{IsraJOT2014}.
In order to state this result, we need  a bit more of notation.
 If $E\subset\C$ is measurable, we denote by
$|E|$  its Lebesgue area measure and let
$\om(E):=\int_{E}\om(z)\,dA(z)$.  Throughout the paper
$Q$  denotes a square in $\R^2$.
We write $l(Q)$ for its
side length and
as usual,  $\frac{1}{p}+\frac{1}{p'}=1$, for $1\le p\le \infty$.

Let $A_{p,r}$ be the class of weights $\om$ on $\C$ such
that $\om(z)>0$ a.e. on $\C$ and
\begin{equation*}
\mathcal{C}_{p,r}(\om):=\sup_{Q,\,l(Q)=r}\left( \frac{1}{|Q|} \int_{Q}
\om\,dA\right)\left(  \frac{1}{|Q|}\int_{Q} \om^{-\frac{p'}{p}}\,dA
\right)^{\frac{p}{p'}}<\infty.
\end{equation*}
If $z_0\in\C$
and $\g\in\mathbb{R}$, $\om(z)=(1+|z+z_0|)^\g$  and
$\om(z)=e^{\g|z+z_0|}$ belong to $\cup_{p> 1}\Ap$ (see the proof of Lemma~\ref{le:weightdistor} below).

 We recall that for a measurable function $f$ in $\C$ the Berezin transform of $f$ is given by
$$\widetilde{f}^{(\a)}(z):=\frac{\a}{\pi}\int_{\C}e^{-\alpha|z-u|^2}f(u)\,dA(u).$$
\begin{lettertheorem}\label{th:isra}
Let $\beta>0$, $1<p<\infty$ and $\om$ a weight  such
that $\om(z)>0$ a. e. on $\C$. Then, the
following conditions are equivalent;
\begin{enumerate}
\item \label{item:isra1} $\om$ belongs to $A_{p,r}$ for some $r>0$;
\item \label{item:isra2} $\om$ belongs to $A_{p,r}$ for any  $r>0$;
\item \label{item:isra3} For each
$\a, \g>0$
\begin{equation*}
\sup_{z\in \C} \widetilde{\om}^{(\a)}(z)\left(
\widetilde{\om^{-\frac{p'}{p}}}^{(\g)}(z)\right)^{\frac{p}{p'}}<\infty;
\end{equation*}
\item \label{item:isra4} $P_\beta$ is bounded on $\mathcal{L}^p_{\beta,\om}$;
\item \label{item:isra5} $P^+_\beta$ is bounded on $\mathcal{L}^p_{\beta,\om}$.
\end{enumerate}
\end{lettertheorem}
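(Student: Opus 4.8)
The plan is to run the cycle $(\ref{item:isra1})\Rightarrow(\ref{item:isra3})\Rightarrow(\ref{item:isra5})\Rightarrow(\ref{item:isra4})\Rightarrow(\ref{item:isra1})$ together with the scale-independence $(\ref{item:isra1})\Leftrightarrow(\ref{item:isra2})$, exploiting that two of the arrows are essentially free: $(\ref{item:isra2})\Rightarrow(\ref{item:isra1})$ is trivial, and $(\ref{item:isra5})\Rightarrow(\ref{item:isra4})$ follows at once from the pointwise domination $|P_\beta(f)(z)|\le P^+_\beta(|f|)(z)$, so that boundedness of $P^+_\beta$ on $\mathcal{L}^p_{\beta,\om}$ forces that of $P_\beta$. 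The substance lives in the remaining three implications, which I would organize around the comparison between averages of $\om$ over squares and Gaussian (Berezin) averages.

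First I would establish the geometric dictionary $(\ref{item:isra1})\Rightarrow(\ref{item:isra3})$. Fix $\a,\g>0$ and tile $\C$ by a grid of squares $\{Q_k\}$ of side $r$. Since $e^{-\a|z-u|^2}\ge c$ for $u$ in the square $Q_0$ centered at $z$ and decays like $e^{-c\a(mr)^2}$ on the $m$-th ring of squares around $z$, the Berezin transform $\widetilde{\om}^{(\a)}(z)$ is trapped between a constant multiple of $\frac{1}{|Q_0|}\int_{Q_0}\om\,dA$ and the Gaussian-weighted sum $\sum_k e^{-c\a(m_k r)^2}\,\frac{1}{|Q_k|}\int_{Q_k}\om\,dA$, and similarly for $\widetilde{\om^{-p'/p}}^{(\g)}$. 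The delicate point is that forming the product of the two Berezin transforms produces off-diagonal cross terms $\big(\frac{1}{|Q_j|}\int_{Q_j}\om\big)\big(\frac{1}{|Q_k|}\int_{Q_k}\om^{-p'/p}\big)^{p/p'}$ with $j\ne k$, which are not directly controlled by $\mathcal{C}_{p,r}(\om)$. To tame them I would prove, as a preliminary lemma, that $A_{p,r}$ self-improves: applying the $A_{p,r}$ bound to squares of side $r$ that straddle the common edge of two adjacent grid squares forces a doubling property at scale $r$, so the averages of $\om$ over squares at grid-distance $m$ differ by at most a factor growing exponentially in $m$. This exponential growth is beaten by the super-exponential Gaussian decay $e^{-c\a(mr)^2}$, which lets the cross terms be absorbed into the diagonal ones and summed, yielding the uniform bound in $(\ref{item:isra3})$.

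Next, $(\ref{item:isra3})\Rightarrow(\ref{item:isra5})$ is a weighted Schur test. Conjugating by the Gaussian density $e^{-\frac{\beta}{2}|\cdot|^2}$ turns $\mathcal{L}^p_{\beta,\om}$ into $L^p(\om\,dA)$ and turns $P^+_\beta$ into the positive operator with kernel $\widetilde K(z,u)=\frac{\beta}{\pi}e^{-\frac{\beta}{2}|z-u|^2}$ acting on $L^p(\om\,dA)$. I would test it against an auxiliary function $h=\big(\widetilde{\om^{-p'/p}}^{(\g)}\big)^{\theta}$ for a suitable exponent $\theta$ and suitable $\g$. The two Schur inequalities then reduce to estimates of $\int e^{-c|z-u|^2}\,h(u)^{p'}\,\om(u)\,dA(u)$ and its dual with $h^{p}$, which I would control by comparing the inner Gaussian average against its value at $z$ and invoking the uniform product bound of $(\ref{item:isra3})$. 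Choosing $\a,\g$ and $\theta$ so that both halves of the test close simultaneously is the technical heart of this step.

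Finally, for the necessity $(\ref{item:isra4})\Rightarrow(\ref{item:isra1})$ I would test the assumed boundedness of $P_\beta$ on a concrete family: for a square $Q$ of fixed side length take $f=\om^{-p'/p}\mathbf{1}_Q$ (times the appropriate Gaussian normalization), evaluate $P_\beta f$ from below at the centre of $Q$ using that $|e^{\beta\overline{u}z}|e^{-\beta|u|^2}$ is bounded below on $Q\times Q$, and compare with $\|f\|_{\mathcal{L}^p_{\beta,\om}}$; the operator-norm inequality then reproduces exactly the $A_{p,r}$ product bound, giving $(\ref{item:isra1})$. The scale-independence $(\ref{item:isra1})\Leftrightarrow(\ref{item:isra2})$ comes for free once the dictionary of the first step is read in reverse: since $(\ref{item:isra3})$ holds for \emph{every} $\a,\g>0$ and the average of $\om$ over a square of side $s$ is comparable to $\widetilde{\om}^{(\a)}$ with $\a\sim s^{-2}$, condition $(\ref{item:isra3})$ yields $A_{p,s}$ for all $s>0$, i.e. $(\ref{item:isra2})$. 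I expect the reverse-H\"older/doubling self-improvement of $A_{p,r}$ in the first step to be the main obstacle, since it is precisely what makes the whole equivalence robust across scales.
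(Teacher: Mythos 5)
First, a point of reference: the paper does not prove Theorem~\ref{th:isra} at all. It is quoted from \cite[Theorem~3.1]{IsraJOT2014}, with the remark that conditions \eqref{item:isra3} and \eqref{item:isra5} follow easily from the proof given there. So your attempt has to stand on its own. Much of it does: \eqref{item:isra5}$\Rightarrow$\eqref{item:isra4} is indeed free; your route \eqref{item:isra1}$\Rightarrow$\eqref{item:isra3}$\Rightarrow$\eqref{item:isra2} is correct, and the self-improvement you identify as the key obstacle (averages of $\om$ over lattice squares at grid-distance $m$ differ by at most $M^{m}$, which the Gaussian decay $e^{-c\a m^2r^2}$ beats) is precisely Isralowitz's Lemma~3.4, reproduced as Lemma~\ref{le:isra34}; your straddling-squares derivation of it is sound. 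In \eqref{item:isra3}$\Rightarrow$\eqref{item:isra5} you defer the verification that both Schur inequalities close; be warned that with $h=\bigl(\widetilde{\om^{-p'/p}}^{(\g)}\bigr)^{\theta}$ the first Schur integral produces an extra factor of $\widetilde{\om'}$ that must be traded away via \eqref{item:isra3}, and it is not evident the exponents match. The safer (and essentially Isralowitz's) route is discrete: H\"older on each lattice square via \eqref{eq:meanapdesc} and Lemma~\ref{le:isra34} to sum the Gaussian tails.

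The genuine gap is in \eqref{item:isra4}$\Rightarrow$\eqref{item:isra1}. The quantity $|e^{\beta\overline{u}z}|e^{-\beta|u|^2}=e^{\beta\mathcal{R}(\overline{u}(z-u))}$ is \emph{not} bounded below on $Q\times Q$ uniformly in the position of $Q$: for $Q=Q_r(z_0)$ it can be as small as $e^{-c\beta r|z_0|}$. The correct normalization (conjugating by $e^{-\frac{\beta}{2}|\cdot|^2}$) yields the modulus $e^{-\frac{\beta}{2}|z-u|^2}$, which \emph{is} bounded below on $Q\times Q$ — but that only controls the positive operator $P^+_\beta$. The kernel of $P_\beta$ retains the oscillatory factor $e^{i\beta\,\mathrm{Im}(\overline{u}z)}$, whose argument sweeps an interval of length $\sim\beta r|z_0|$ as $z,u$ range over $Q_r(z_0)$, so for a single real-signed test function $\om^{-p'/p}\mathbf{1}_Q$ there is no pointwise lower bound for $|P_\beta f|$ on $Q$ (and a lower bound at the single centre point, a set of measure zero, cannot bound $\|P_\beta f\|_{\mathcal{L}^p_{\beta,\om}}$ from below anyway). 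The standard repair is a duality argument: pair $P_\beta f$ against $g$, both supported in $Q$, absorb the separable part $\mathrm{Im}\bigl(\overline{z_0}(z-u)\bigr)$ of the phase into the phases of $f$ and $g$, and note that the residual phase $\mathrm{Im}\bigl((\overline{u}-\overline{z_0})(z-z_0)\bigr)$ is $O(r^2)$, hence harmless for $r$ small; the general $r$ then follows from \eqref{item:isra1}$\Leftrightarrow$\eqref{item:isra2}. Proving \eqref{item:isra5}$\Rightarrow$\eqref{item:isra1} instead (where positivity removes the phase problem) would not rescue your cycle, since \eqref{item:isra4} would then no longer be linked back to the other conditions.
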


The conditions \eqref{item:isra3} and \eqref{item:isra5} in Theorem~\ref{th:isra} do not appear
in the statement of \cite[Theorem~3.1]{IsraJOT2014}. The equivalence
of \eqref{item:isra5} with \eqref{item:isra2} follows easily from the proof of \cite[Theorem~3.1]{IsraJOT2014}. Moreover, the \lq\lq
invariant\rq\rq description of $A_{p,r}$ weights  obtained via the
 Berezin transform was proved
 in \cite[p.~397]{IsraJOT2014} (it also
follows from Lemma~\ref{le:2} below).

\par In order to complete the picture about the boundedness of the Bergman projection on
$\mathcal{L}^p_{\beta,\om}$, we will deal with the remaining case
$p=1$.
 In particular, fixed $r>0$, we say that a weight $\om\in
A_{1,r}$ if $\om(z)>0$ a.e. on $\C$ and there is a constant $C=C(r,\om)$ such that for any
square with $l(Q)=r$,
\begin{equation*}
\frac{\om(Q)}{|Q|}\le C \om(u),\quad\text{a. e.
$u\in Q$.}
\end{equation*}
We denote
\begin{equation*}
\mathcal{C}_{1,r}(\om):=\sup_{Q,\,l(Q)=r}\frac{\om(Q)}{|Q|\text{infess}_{u\in
Q}\om(u)}.
\end{equation*}

We will prove an analogous  version  of Theorem \ref{th:isra} for $A_{1,r}$ weights (see Proposition \ref{pr:p=1} below).

In view of Theorem~\ref{th:isra} and Proposition~\ref{pr:p=1},
  the condition  $A_{p,r}$ does
not depend on $r$. So from now on, following the notation in \cite{IsraJOT2014}, we will write $\Ap$ for these
classes of weights.
 Moreover, $\om\in \Ap$ if and only if  $P^+_\a$ is
bounded on $\Lpaom$ for some (equivalently for any) $\a>0$.

Similarly to Muckenhoupt weights, we denote
$$
\Ainfty:= \bigcup_{ 1\le q<\infty}\Aq .
$$
 A  discussion about properties and examples of  $\Ap$-weights,
 as well as similarities and differences with the classical Muckenhoupt weights,
  will be provided in Section \ref{sec:apv}.
 In particular,  we  sketch the proof of a
 characterization of $\Ainfty$-weights, in the sense of Kerman and Torchinsky \cite{KT}, obtained in \cite{DMOMathZ16}.

Equipping a space of functions with different equivalent norms
results to be quite effective in the study of function and concrete operator theory \cite{AlCo,ConsPelJGA2016,PelRatproj,Zhu,ZhuFock}. In the context of Fock spaces,
 it is known that (see \cite{ChoZhu2012,ConsPAMS2012,ConsPelJGA2016}) for
any $\a,p>0$
\begin{equation}\label{eq:LPfcla}
\| f\|^p_{\Fpa}\asymp \sum_{j=0}^{k-1}|f^{(j)}(0)|^p+
\int_{\C}\frac{|f^{(k)}(z)|^p}{(1+|z|)^{kp}}e^{-p\frac{\a}{2}|z|^2}\,dA(z),\quad
f\in H(\C),
\end{equation}
that is, the distortion function in the above Littlewood-Paley
formula is $\psi(z)=\frac{1}{1+|z|}$. We will use Theorem~\ref{th:isra}, among other techniques,
to prove
an extension of \eqref{eq:LPfcla} to weighted
Fock spaces
induced by $\Ainfty$~-~weights.

\begin{theorem} \label{th:LPformula}
 Let $\om\in \Ainfty$. Then, for $0<p<\infty$, $k\in\N$ and
$\a>0$
\begin{equation*}
\| f\|^p_{\Fpaom}\asymp
 \sum_{j=0}^{k-1}|f^{(j)}(0)|^p+
\int_{\C}|f^{(k)}(z)|^pe^{-p\frac{\a}{2}|z|^2}\frac{\om(z)}{(1+|z|)^{kp}}\,dA(z),\quad
f\in H(\C).
\end{equation*}
\end{theorem}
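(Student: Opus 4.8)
The plan is to reduce everything to the case $k=1$ by an inductive, telescoping argument and then to establish the one--derivative equivalence. Writing $T_j(f):=\int_{\C}|f^{(j)}(z)|^pe^{-p\frac{\a}{2}|z|^2}\frac{\om(z)}{(1+|z|)^{jp}}\,dA(z)$, so that $T_0(f)=\|f\|^p_{\Fpaom}$, the claim becomes $T_0(f)\asymp\sum_{j=0}^{k-1}|f^{(j)}(0)|^p+T_k(f)$. The point is that $T_j(f)$ is the weighted norm of the entire function $f^{(j)}$ against the weight $v_j:=\om(z)(1+|z|)^{-jp}$; since on every square of fixed side length the factor $(1+|z|)^{-jp}$ has bounded oscillation, it changes the characteristic $\mathcal{C}_{p,r}$ by at most a constant, so $\om\in\Ainfty$ forces $v_j\in\Ainfty$ (this is essentially the content of Lemma~\ref{le:weightdistor}). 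Granting the case $k=1$ for every $A_\infty$ weight and applying it to $g=f^{(j)}$ with weight $v_j$ yields $T_j(f)\asymp|f^{(j)}(0)|^p+T_{j+1}(f)$, and telescoping over $0\le j\le k-1$ gives the theorem. It thus remains to prove, for each $v\in\Ainfty$,
\begin{equation*}
\int_{\C}|g|^pe^{-p\frac{\a}{2}|z|^2}v\,dA\asymp|g(0)|^p+\int_{\C}|g'|^pe^{-p\frac{\a}{2}|z|^2}\frac{v}{(1+|z|)^p}\,dA.
\end{equation*}

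The enabling step is to regularize the weight. Every $v\in\Ainfty$ is doubling, and using the Berezin--transform comparison of Theorem~\ref{th:isra}\eqref{item:isra3} together with Lemma~\ref{le:2} I would replace $v$ in both integrals by its Fock--disk average $\widehat v(z)\asymp v\big(D(z,\delta/(1+|z|))\big)\big/|D(z,\delta/(1+|z|))|$, which is comparable to $v$ in the relevant integrated sense and behaves, on the scale of the Fock disks, like an $A_1$ weight. This is precisely the place where the full $A_\infty$ structure (rather than mere doubling) is needed, because it removes the pointwise--versus--average discrepancy between $v$ and its local means.

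For the inequality controlling the derivative by the function I would work on the Fock disks $D_z:=D\!\left(z,\delta/(1+|z|)\right)$, whose Euclidean radius is matched to the distortion factor. A Cauchy estimate on $D_z$ gives $|g'(z)|\lesssim (1+|z|)\,\delta^{-1}\max_{D_z}|g|$, so the weight $(1+|z|)^{-p}$ on the right exactly cancels the growth produced by differentiation. On $D_z$ the Gaussian $e^{-p\frac{\a}{2}|\cdot|^2}$ is comparable to its value at $z$ (since $|w|^2-|z|^2=O(1)$ there), and a sub--mean--value estimate for the subharmonic function $|g|^pe^{-p\frac{\a}{2}|\cdot|^2}$ followed by a bounded--overlap Fubini argument transfers its local $L^p(\widehat v)$ mass onto $z$; the regularization of the previous paragraph is what makes the transfer clean. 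The boundedness of point evaluation, $|g(0)|^p\lesssim\int_{\C}|g|^pe^{-p\frac{\a}{2}|z|^2}v\,dA$, follows from the same local and doubling estimates.

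The reverse inequality, recovering $g$ from $g'$, is the main obstacle. The natural route is to reconstruct $g$ from $g'$ through the reproducing formula $g=P_\a g$ and an integration by parts in the kernel $e^{\a\overline{w}z}$, which writes $g(z)-g(0)$ as an integral of $g'(w)$ against a kernel whose modulus is that of the Fock reproducing kernel divided by $(1+|w|)$; dominating this pointwise reduces the bound to the boundedness on $\Lpaom$ of the positive Bergman--type operator $P^+_\a$. When $v$ happens to lie in $A_p$ this is exactly Theorem~\ref{th:isra}\eqref{item:isra5} (with $p=1$ covered by Proposition~\ref{pr:p=1}). The genuine difficulty is that $v\in\Ainfty$ only guarantees $v\in A_{q_0}$ for \emph{some} $q_0$, whereas here $p$ ranges over all of $(0,\infty)$, so the projection is in general \emph{not} bounded on $\Lpaom$ for the prescribed $p$. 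To obtain a uniform argument I would therefore discretize: choosing a lattice $\{z_m\}$ whose Fock disks cover $\C$ with bounded overlap, I would use the sub--mean--value property and the regularized weight to reduce both sides of the $k=1$ equivalence to comparable sums over the lattice, and compare the function sum with the derivative sum by an elementary telescoping/finite--difference estimate that uses only the $p$--triangle inequality valid for $0<p\le1$ together with the doubling of $v$. I expect the careful verification of the bounded--overlap covering and of these weight comparisons, for general $A_\infty$ weights and the full range of $p$, to be the most technical part of the argument.
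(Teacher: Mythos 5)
Your overall skeleton matches the paper's: reduce to $k=1$ via Lemma~\ref{le:weightdistor}, prove the hard direction by reconstructing $g$ from $g'$ through the reproducing formula with the kernel divided by $\overline{w}$ (this is exactly Lemma~\ref{le:representation_formula}), and reduce to the boundedness of $P^+_\a$. You also correctly isolate the crux: $\om\in\Ainfty$ only gives $\om\in\Apo$ for \emph{some} $p_0$, while $p$ ranges over all of $(0,\infty)$. However, at precisely that point your proposal has a genuine gap. The paper resolves it with a concrete device you do not supply: Corollary~\ref{cor:subhP}, namely $[P_\a^+(|g|)]^{\theta}\lesssim P^+_{\theta\a}(|g|^{\theta})$ for $\theta\in(0,1]$ (proved via the embedding $\mathcal{F}^1_{2\a}\supset\mathcal{F}^{\theta}_{2\a}$ applied to $g(\cdot)e^{\a(\cdot)\overline z}$). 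Taking $\theta=p/p_0$ converts the required $\mathcal{L}^p_{\a,\om}$ bound for $P^+_\a(|g|)$ into an $\mathcal{L}^{p_0}_{\a p/p_0,\om}$ bound for $P^+_{p\a/p_0}(|g|^{p/p_0})$, where Theorem~\ref{th:isra} applies directly (Lemma~\ref{lem:subhP}). Your substitute --- discretizing over a lattice and comparing the function sum with the derivative sum ``by an elementary telescoping/finite-difference estimate'' --- is not an argument: it is not explained how summing $|g(z_m)|^p$ over a lattice is controlled by $|g(0)|^p$ plus the lattice sum of $|g'(z_m)|^p(1+|z_m|)^{-p}$ for entire functions of Gaussian growth, and no mechanism replacing the projection bound is actually exhibited.

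A second, independent problem is your regularization step. The disks $D(z,\delta/(1+|z|))$ of shrinking radius are the wrong objects here: the classes $\Ap$ are defined at a \emph{fixed} scale, and the paper's own remark in Section~\ref{sec:apv} (the weight $v(z)=|z|^2(\log\frac{1}{|z|})^2$) shows that $\Ainfty$ weights do \emph{not} satisfy uniform $A_p$-type conditions over small squares. Consequently your claim that the average of $v$ over $D(z,\delta/(1+|z|))$ is comparable to $v$ ``in the relevant integrated sense'' and ``behaves like an $A_1$ weight'' is unsupported and false in general. The correct sub-mean-value input is Lemma~\ref{le:1}, which works on disks of fixed radius and produces the average of the weight over such a disk; the factor $(1+|z|)^{-p}$ should come not from Cauchy estimates on shrinking disks but from the identity $f'(z)=\a P_\a(\overline{\zeta}f(\zeta))(z)$ together with $|\zeta|/(1+|\zeta|)\le 1$, as in Proposition~\ref{pr:in1}. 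As written, the proposal does not constitute a proof.
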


As for the proof of Theorem~\ref{th:LPformula}, it is enough to prove the case $k=1$ because
$$\om_{\gamma}(z):= \frac{\om(z)}{(1+|z|)^{\gamma}}\in \Ainfty,\qquad \g\in\mathbb{R},$$
  whenever $\om\in\Ainfty$ (see Lemma \ref{le:weightdistor} below).

\par Later on, we apply Theorem~\ref{th:LPformula} to study the
boundedness of the differentiation and integration operator. We denote
 $D^{(n)}(f)=f^{(n)}$, \, $n\in \N$,
$D^{(0)}(f)=f$, $D^{(-1)}(f)(z)=\int_0^z f(\z)\,d\z$ and
$D^{(-n)}(f)=D^{(-n+1)}\circ D^{(-1)}(f)$,\, $n\in\N$.
 Given $0<\a, p, q<\infty$,  a positive Borel measure
$\mu$ on $\C$ is a $q$-Carleson measure for $\Fpaom$ if the identity
$I_d: \Fpaom\to L^q(\mu)$ is a bounded operator.  We  denote by $D(z,r)$
the Euclidean disc of center $z$ and radius $r>0$.

\begin{theorem}\label{th:differentiation}
 Let
 $\a\in (0,\infty)$, $n\in\Z$, $\om\in \Ainfty$ and
$\mu$ a  finite positive Borel measure on $\C$ such that $\sup_{k\in \N}\| z^k\|_{L^q(\mu)}<\infty$.
\par If $0<p\le q<\infty$, the following conditions are equivalent:
\begin{enumerate}
\item $D^{(n)}\,:\Fpaom\to L^q(\mu)$ is a bounded operator;
\item $\mu$ is a $q$-Carleson measure for $
\mathcal{F}^p_{\alpha, \om_{np}}$;
\item The function
$$
G(a):=\frac{1}{\left(\om_{np}(D(a,1))\right)^{\frac{q}{p}}}\int_{D(a,1)}
	e^{\a\frac{q}{2}|z|^2}\,d\mu(z)
$$
is bounded on $\C$.
\end{enumerate}
Moreover,   if $n\in\N\cup\{0\}$
\begin{equation}\label{normidentityfs}
\|I_d\|_{\mathcal{F}^p_{\alpha, \om_{np}}\to L^q(\mu)}^q\asymp
\|D^{(n)}\|_{\Fpaom\to L^q(\mu)}^q \asymp \|G\|_{L^\infty},
\end{equation}
and for $n$  a negative integer,
\begin{equation}\label{normidentityfsnegative}
\|I_d\|_{\mathcal{F}^p_{\alpha, \om_{np}}\to L^q(\mu)}^q +C_{\mu,n}\asymp
\|D^{(n)}\|_{\Fpaom\to L^q(\mu)}^q+ C_{\mu,n} \asymp \|G\|_{L^\infty}+C_{\mu,n}.
\end{equation}
where $C_{\mu,n}=\max_{k=0,\dots, -n-1}\|z^k\|^q_{L^q(\mu)}<\infty$.
\par If $0<q<p<\infty$, the following conditions are equivalent:

\begin{enumerate}
\item $D^{(n)}\,:\Fpaom\to L^q(\mu)$ is a bounded operator;
\item $\mu$ is a $q$-Carleson measure for $
\mathcal{F}^p_{\alpha, \om_{np}}$;
\item The function
$$H(u):= \frac{1}{\om_{np}\left(D(u,1)\right)}\int_{D(u,1)}
e^{\frac{q\a |z|^2}{2}}\,d\mu(z)
$$
is  in
$L^{\frac{p}{p-q}}(\mathbb{C},\om_{np}).$
\end{enumerate}

Moreover, if $n\in\N\cup\{0\}$,

\begin{equation}\label{normidentityfs2}
\|I_d\|_{\mathcal{F}^p_{\alpha, \om_{np}}\to L^q(\mu)}^q \asymp
\|D^{(n)}\|_{\Fpaom\to L^q(\mu)}^q
 \asymp \|H\|_{L^{\frac{p}{p-q}}(\C,\om_{np})},
\end{equation}
and if $n$ is a negative integer,
\begin{equation}\label{normidentityfs2negative}
\|I_d\|_{\mathcal{F}^p_{\alpha, \om_{np}}\to L^q(\mu)}^q + C_{\mu,n} \asymp
\|D^{(n)}\|_{\Fpaom\to L^q(\mu)}^q +C_{\mu,n}
 \asymp \|H\|_{L^{\frac{p}{p-q}}(\C,\om_{np})}+ C_{\mu,n}.
\end{equation}
\end{theorem}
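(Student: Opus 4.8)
The plan is to use the Littlewood-Paley formula of Theorem~\ref{th:LPformula} to transfer the whole problem to a genuine Carleson-measure question, obtaining the equivalence (1)$\Leftrightarrow$(2) almost for free, and then to settle (2)$\Leftrightarrow$(3) through the Carleson-measure description of the weighted Fock spaces $\mathcal{F}^p_{\alpha,\sigma}$ with $\sigma\in\Ainfty$. The key algebraic observation is that $\frac{\om(z)}{(1+|z|)^{np}}=\om_{np}(z)$, so that Theorem~\ref{th:LPformula} with $k=|n|$ converts a norm involving the $|n|$-th derivative into a plain Fock norm with a shifted weight. Concretely, since $D^{(-1)}(h)(0)=0$, the $j$-th derivative at the origin of an iterated antiderivative $D^{(-m)}(h)$ vanishes for $0\le j\le m-1$; hence Theorem~\ref{th:LPformula} gives, for $n\in\N$,
\[
\|D^{(-n)}(g)\|^p_{\Fpaom}\asymp \|g\|^p_{\mathcal{F}^p_{\alpha,\om_{np}}},
\]
and, for $n=-m<0$,
\[
\|D^{(-m)}(f)\|^p_{\mathcal{F}^p_{\alpha,\om_{np}}}\asymp \|f\|^p_{\Fpaom},
\]
the point being that in each case the finite sum of derivatives at $0$ occurring in the formula drops out. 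By Lemma~\ref{le:weightdistor} one has $\om_{np}\in\Ainfty$, so the target spaces are of the same type.

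Second, I would prove (1)$\Leftrightarrow$(2) for $n\ge 0$; note that this reduction is insensitive to the relation between $p$ and $q$. For $n=0$ it is a tautology. For $n\in\N$ the operator $D^{(n)}\colon\Fpaom\to\mathcal{F}^p_{\alpha,\om_{np}}$ is onto: given $g\in\mathcal{F}^p_{\alpha,\om_{np}}$, the function $f=D^{(-n)}(g)$ lies in $\Fpaom$ with $f^{(n)}=g$ and $\|f\|_{\Fpaom}\asymp\|g\|_{\mathcal{F}^p_{\alpha,\om_{np}}}$ by the first displayed equivalence. Since $\int_{\C}|f^{(n)}|^q\,d\mu=\int_{\C}|g|^q\,d\mu$, the boundedness of $D^{(n)}$ on $\Fpaom$ is literally the statement that $\mu$ is a $q$-Carleson measure for $\mathcal{F}^p_{\alpha,\om_{np}}$, with matching norms; together with the last step this yields the first lines of \eqref{normidentityfs} and \eqref{normidentityfs2}.

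Third, and this is where the bookkeeping is delicate, comes $n=-m<0$. Now $D^{(-m)}$ is injective but its range is only the subspace of $\mathcal{F}^p_{\alpha,\om_{np}}$ consisting of functions vanishing to order $m$ at the origin. To pass between the Carleson condition on all of $\mathcal{F}^p_{\alpha,\om_{np}}$ and the operator $D^{(-m)}$, I would split an arbitrary $g\in\mathcal{F}^p_{\alpha,\om_{np}}$ as $g=P+R$, where $P$ is its Taylor polynomial of degree $<m$ and $R=D^{(-m)}(g^{(m)})$ vanishes to order $m$; the boundedness of the coefficient functionals on $\mathcal{F}^p_{\alpha,\om_{np}}$ controls $\|P\|_{L^q(\mu)}$ by $\sum_{k=0}^{m-1}\|z^k\|_{L^q(\mu)}\,\|g\|$, while $R$ is governed by $D^{(-m)}$. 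This is exactly what forces the correction $C_{\mu,n}=\max_{0\le k\le m-1}\|z^k\|^q_{L^q(\mu)}$: the identity on $\mathcal{F}^p_{\alpha,\om_{np}}$ must also control the monomials $z^k$, $0\le k<m$, which $D^{(-m)}$ never sees, so the two operator norms coincide only after adding $C_{\mu,n}$, finite by the hypothesis $\sup_{k}\|z^k\|_{L^q(\mu)}<\infty$. Tracking this carefully produces \eqref{normidentityfsnegative} and \eqref{normidentityfs2negative}. I expect this step to be the main obstacle, since one must verify that the finite-dimensional polynomial contribution is captured \emph{exactly} by $C_{\mu,n}$ and that the comparisons hold with the additive constant on both sides, rather than only as a consequence of the plain equivalences (1)$\Leftrightarrow$(2)$\Leftrightarrow$(3).

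Finally, (2)$\Leftrightarrow$(3) is the Carleson-measure characterization for the weighted Fock space $\mathcal{F}^p_{\alpha,\sigma}$ with $\sigma=\om_{np}\in\Ainfty$. For the upper bounds one invokes the sub-mean-value estimate $|g(z)|^pe^{-p\frac{\a}{2}|z|^2}\lesssim \frac{1}{\sigma(D(z,1))}\int_{D(z,1)}|g(w)|^pe^{-p\frac{\a}{2}|w|^2}\sigma(w)\,dA(w)$; writing $\int|g|^q\,d\mu=\int\big(|g(z)|e^{-\frac{\a}{2}|z|^2}\big)^q e^{q\frac{\a}{2}|z|^2}\,d\mu(z)$ then produces the kernels $e^{\frac{q\a}{2}|z|^2}d\mu$ and the averages $\sigma(D(\cdot,1))$ appearing in $G$ and $H$. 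The case $p\le q$ yields a pointwise (sup) bound on $G$, while the case $q<p$ yields, by Hölder/duality with exponent $\frac{p}{p-q}$, the integrability $H\in L^{p/(p-q)}(\C,\sigma)$. The reverse inequalities are obtained by testing against normalized reproducing kernels $k_a(z)\asymp e^{\a\bar a z-\frac{\a}{2}|a|^2}$ for $p\le q$, and against suitable superpositions of them indexed by a lattice covering of $\C$ for $q<p$. Feeding this into the preceding steps closes all four norm identities.
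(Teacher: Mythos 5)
Your proposal is correct and follows essentially the same route as the paper: the case $n=0$ is established first as a Carleson-measure theorem for $\mathcal{F}^p_{\alpha,\sigma}$, $\sigma\in\Ainfty$ (sub-mean-value estimate for the sufficiency, reproducing kernels for $p\le q$, and Khinchine-type lattice superpositions for $q<p$), and the general $n$ is then reduced to it via the Littlewood--Paley formula of Theorem~\ref{th:LPformula}, with the additive constant $C_{\mu,n}$ arising for $n<0$ exactly from the Taylor-polynomial part that $D^{(-m)}$ does not see. Your handling of the vanishing derivatives at the origin and of the polynomial correction matches the paper's bookkeeping.
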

\par It is worth noticing that the  technical assumption  $\sup_{k\in \N}\| z^k\|_{L^q(\mu)}<\infty$ in Theorem~\ref{th:differentiation},
which is only used when $n$ is a negative integer,
is not a real restriction (see Lemma~\ref{le:2} below).
Let us also consider the Fock-Sobolev spaces $\mathcal{F}^{p,n}_{\a,\om}$,
 $n\in\N$,
  of the entire functions such that
$$
\|f\|^p_{\mathcal{F}^{p,n}_{\a,\om}}:=
\sum_{k=0}^{n-1}|f^{(k)}(0)|^p+
\int_{\C}|f^{(n)}(z)|^pe^{-p\frac{\a}{2}|z|^2}\,\om(z)dA(z)<\infty.
$$
 By Theorem \ref{th:LPformula} and Lemma~\ref{le:weightdistor} (below),  $\mathcal{F}^{p,n}_{\a,\om}$ coincides with $\mathcal{F}^{p}_{\a,\om_{-np}}$, so
Theorem~\ref{th:differentiation} also gives a  description of the $q$-Carleson
measures for  Fock-Sobolev spaces induced by $\Ainfty$-weights.
A characterization of $q$-Carleson measures  for classical Fock-Sobolev spaces is provided in \cite{MeCAOT14}, see also \cite{ChoZhu2012, Isralowitz-Zhu}.

As a consequence of Theorem \ref{th:differentiation} we obtain characterizations of pointwise multipliers between weighted Fock spaces,
$\Fpaom$ and $\mathcal{F}^q_{\b,\eta}$, for $0<q,p<\infty$, $\eta$ a weight and $\om\in\Ainfty$, which are particularly neat for
$\om=\eta$,   see Theorem~\ref{thm:PMom} below.

This paper is organized as follows. In Section \ref{sec:apv}
we study $\Ap$-weights and prove a collection of preliminary results which will be employed to
prove  Proposition~\ref{pr:p=1}. These estimates will  also be used to prove  Theorem \ref{th:LPformula}
in  Section \ref{sec:LP}.
 We prove Theorem \ref{th:differentiation} in Section \ref{sec:Carleson}.
Finally, in Section \ref{sec:PM-E} we apply the descriptions on Carleson measures for $\Fpaom$ to obtain
 characterizations of  pointwise multipliers between weighted Fock spaces.

For two real-valued functions $E_1,E_2$ we write $E_1\asymp E_2$, or
$E_1\lesssim E_2$, if there exists a positive constant $k$
independent of the argument such that $\frac{1}{k} E_1\leq E_2\leq k
E_1$, respectively $E_1\leq k E_2$.
Given a weight $\om\in \Ap$, $1<p<\infty$, we denote the  conjugate weight $\om^{-p'/p}$ by $\om'$.
We write $\mathcal{R}(z)$ for the
real part of a complex number $z$.

\section{$\Ap$ weights versus classical
	 Muckenhoupt $A_p$ weights.}\label{sec:apv}

In this section we briefly discuss some aspects of the theory of $\Ap$-weights, in particular we
 give examples  and descriptions of these classes of weights. Some of these characterizations are not used in the proofs
 of further results in this work. However we consider relevant to understand the basic features of $\Ap$-weights,
specially comparing them with their analogues into the theory of classical Muckenhoupt weights.
We recall that $\om$ belongs to the class of Muckenhoupt weights
$A_p$, $1<p<\infty$, if
\begin{equation*}
A_p(\om):= \sup_{Q} \frac{\om(Q)}{|Q|} \left(
\frac{\om'(Q)}{|Q|}
\right)^{\frac{p}{p'}}<\infty,
\end{equation*}
where the supremum runs over all the squares $Q\subset \C$.
In addition, $\om\in A_1$ if \begin{equation*}
A_1(\om):=
\sup_{Q} \frac{\om(Q)}{|Q|\text{infess}_{u\in Q} \om(u)} <\infty.
\end{equation*}
An extensive  study  of these weights can be found, for instance, in  \cite{GarciaCuerva,SteinHarmonic93}.

\subsection{The class $\Ap$ with $1\le p<\infty$}\quad\par

 We begin gathering some 
 properties of  $\Ap$-weights
 which are analogous to the properties of  Muckenhoupt weights (see \cite[p.~195]{SteinHarmonic93}). We write $Q_r(z)$
 for the square of center $z\in\C$, with sides parallel to the coordinate axes and $l(Q)=r$.

 \begin{lemma}\label{le:weightdistor}
 	Let $p\ge 1$, $\g\in\mathbb{R}$ and  $\om\in \Ap$. Then
 	$$\om_\g=\frac{\om(z)}{(1+|z|)^\g}\in\Ap.$$
 \end{lemma}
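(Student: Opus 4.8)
The plan is to exploit the feature that most sharply distinguishes $\Ap$ from the classical Muckenhoupt class $A_p$: the condition $A_{p,r}$ tests only squares of one fixed side length $r$, and on such squares the distortion factor $(1+|\cdot|)^{\g}$ is \emph{essentially constant}. Multiplying $\om$ by a factor that is bounded above and below by constants on each test square cannot change the $A_{p,r}$ quotient by more than a fixed factor, so $\om_\g$ inherits the condition from $\om$.

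First I would record the elementary geometric estimate: if $w\in Q_r(z)$ then $\bigl||w|-|z|\bigr|\le|w-z|\le r/\sqrt2$, whence $1+|w|\asymp 1+|z|$ with constants depending only on $r$, and therefore $(1+|w|)^{-\g}\asymp(1+|z|)^{-\g}$ with constants depending on $r$ and $\g$. Writing $Q=Q_r(z)$, this gives at once
$$\frac{1}{|Q|}\int_Q \om_\g\,dA\asymp (1+|z|)^{-\g}\,\frac{1}{|Q|}\int_Q \om\,dA .$$
For $p>1$ I would treat the conjugate weight the same way, using $\om_\g^{-p'/p}=\om^{-p'/p}(1+|\cdot|)^{\g p'/p}$ to obtain
$$\left(\frac{1}{|Q|}\int_Q \om_\g^{-p'/p}\,dA\right)^{p/p'}\asymp (1+|z|)^{\g}\left(\frac{1}{|Q|}\int_Q \om^{-p'/p}\,dA\right)^{p/p'}.$$
Multiplying the two displays, the factors $(1+|z|)^{-\g}$ and $(1+|z|)^{+\g}$ cancel exactly, so the $A_{p,r}$ quotient of $\om_\g$ on $Q_r(z)$ is comparable to that of $\om$, uniformly in $z$; taking the supremum yields $\mathcal{C}_{p,r}(\om_\g)\asymp\mathcal{C}_{p,r}(\om)<\infty$. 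The case $p=1$ is the same but simpler: both $\om_\g(Q)/|Q|$ and $\text{infess}_{u\in Q}\om_\g(u)$ acquire the factor $\asymp(1+|z|)^{-\g}$, which cancels in the quotient defining $\mathcal{C}_{1,r}$. Since $\Ap=A_{p,r}$ for every $r$ by Theorem~\ref{th:isra} and Proposition~\ref{pr:p=1}, this proves $\om_\g\in\Ap$.

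The same mechanism yields the examples announced after the definition of $A_{p,r}$. The constant weight $1$ trivially lies in $A_{p,r}$ for every $p>1$, so applying the computation above with $\om\equiv1$ gives $(1+|z|)^{\g}\in\Ap$; translation invariance of the family of squares of side $r$ then upgrades this to $(1+|z+z_0|)^{\g}\in\Ap$. For $e^{\g|\cdot+z_0|}$ I would note that $\bigl||w+z_0|-|z+z_0|\bigr|\le r/\sqrt2$ on $Q_r(z)$ shows this weight is again essentially constant on squares of side $r$, so the product of the two averages in $\mathcal{C}_{p,r}$ is $\asymp\om(z)\,\om(z)^{-1}=1$, placing it in $\Ap$ for every $p>1$.

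I do not expect a genuine obstacle here; the entire argument is bookkeeping around a single idea. The only point requiring care is checking that the exponents on $(1+|z|)$ cancel—which they do precisely because the inner power $\g p'/p$ is undone by the outer power $p/p'$. It is worth emphasizing that this cancellation argument relies on testing only squares of a single bounded side length: over an unbounded family of squares the factor $(1+|\cdot|)^{\g}$ is no longer essentially constant, which is exactly why the statement is special to the restricted classes $\Ap$ rather than to the classical Muckenhoupt weights.
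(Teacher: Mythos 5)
Your proof is correct and uses exactly the same idea as the paper's: on a square of fixed side length $r$ the factor $(1+|z|)^{\g}$ is essentially constant, so the $A_{p,r}$ quotient of $\om_\g$ is comparable to that of $\om$. The paper states this in one line; you have simply written out the cancellation of the $(1+|z|)^{\pm\g}$ factors and the $p=1$ case explicitly.
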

 \begin{proof}
 	Fix $r>0$. Then for any  $z_0\in\C$,
 	$$ \frac{(1+|z|)}{1+r}\le (1+|z_0|)\le  (1+r)(1+|z|), \quad z\in
 	Q_r(z_0).$$
 So, it follows
 	that
 	$\frac{\om(z)}{(1+|z|)^\g}\in \Ap$.
 \end{proof}

\begin{proposition} \label{prop:aprest}
	Let $\om$ be a weight  such
that $\om(z)>0$ a. e. on $\C$. Then,
\begin{enumerate}
\item\label{item:aprest1}  If $1<p<\infty$, $\om\in \Ap$  if and only if
$\om'=\om^{-p'/p}\in \Apprime$.
\item\label{item:aprest3}   If $1\leq p<\infty$,  $\om\in \Ap$ if and only if for some (equivalently for any) $r>0$ there is a constant
$C_r>0$ such that for any nonnegative measurable function $f$
\begin{equation}\label{eq:meanapdesc}
\left(\frac{1}{|Q|}\int_{Q} f dA\right)^p\le
\frac{C_r}{\om(Q)}\int_{Q} f^p\om dA,\quad\text{for any $Q$ with
$l(Q)=r$. }
\end{equation}
\item\label{item:aprest2} If $1\leq p< q<\infty$, $\Ap\subsetneq \Aq$.
\item\label{item:aprest4} If $1\leq p<\infty$, $A_p\subsetneq \Ap$.

\end{enumerate}
\end{proposition}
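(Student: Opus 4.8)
Throughout I work on a fixed side length $r$; this is legitimate because, by Theorem~\ref{th:isra} (and Proposition~\ref{pr:p=1} for $p=1$), membership in $A_{p,r}$ does not depend on $r$. For a square $Q$ with $l(Q)=r$ I write $X_Q=\frac{1}{|Q|}\int_Q\om\,dA$ and $Y_Q=\frac{1}{|Q|}\int_Q\om^{-p'/p}\,dA$, so that $\mathcal{C}_{p,r}(\om)=\sup_Q X_Q\,Y_Q^{p/p'}$. For \eqref{item:aprest1} the plan is a direct computation exploiting the involutions $(\om')'=\om$ and $(p')'=p$. Since $(\om')^{-p/p'}=\om$, the conjugate condition reads $\mathcal{C}_{p',r}(\om')=\sup_Q Y_Q\,X_Q^{p'/p}=\bigl(\sup_Q X_Q\,Y_Q^{p/p'}\bigr)^{p'/p}=\mathcal{C}_{p,r}(\om)^{p'/p}$, so one constant is finite exactly when the other is. For \eqref{item:aprest3} with $1<p<\infty$ the forward implication comes from Hölder's inequality: writing $f=(f\om^{1/p})\,\om^{-1/p}$ and applying Hölder with exponents $p,p'$ on $Q$ gives $\bigl(\frac{1}{|Q|}\int_Q f\bigr)^p\le \frac{1}{|Q|^p}\bigl(\int_Q f^p\om\bigr)\bigl(\int_Q\om^{-p'/p}\bigr)^{p/p'}$, and the identity $\frac{1}{|Q|^p}\bigl(\int_Q\om^{-p'/p}\bigr)^{p/p'}=\frac{1}{\om(Q)}X_QY_Q^{p/p'}\le \frac{\mathcal{C}_{p,r}(\om)}{\om(Q)}$ yields \eqref{eq:meanapdesc}. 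The reverse implication follows by testing \eqref{eq:meanapdesc} with the truncations $f=\min\{\om^{-p'/p},m\}$ and letting $m\to\infty$ via monotone convergence, since the resulting inequality is precisely the $A_{p,r}$ condition; for $p=1$ the $A_1$ inequality $\om(Q)/|Q|\le C\,\text{infess}_Q\om$ gives \eqref{eq:meanapdesc} directly, with the converse obtained by concentrating $f$ where $\om$ is near its essential infimum.

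For the inclusion in \eqref{item:aprest2} I would apply the power-mean (Jensen) inequality to $g=\om^{-1/(q-1)}$ with exponent $s=(q-1)/(p-1)>1$, so that $\frac{1}{|Q|}\int_Q g\le\bigl(\frac{1}{|Q|}\int_Q g^s\bigr)^{1/s}$; raising to the power $q-1$ and multiplying by $X_Q$ gives $\mathcal{C}_{q,r}(\om)\le\mathcal{C}_{p,r}(\om)$, and the case $p=1$ follows directly from the $A_1$ estimate. Strictness is witnessed by $\om(z)=|z|^{2(p-1)}$ when $p>1$ (and by $\om(z)=|z|^a$ with any $a\in(0,2(q-1))$ when $p=1$): such a weight lies in the classical $A_q$ class, hence in $\Aq$ by \eqref{item:aprest4}, because $-2<2(p-1)<2(q-1)$, whereas $\om^{-p'/p}=|z|^{-2}$ is not locally integrable, so $\mathcal{C}_{p,r}(\om)=\infty$ on any square of side $r$ meeting the origin and thus $\om\notin\Ap$.

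For \eqref{item:aprest4} the inclusion is immediate: the family $\{Q:l(Q)=r\}$ is contained in the family of all squares, whence $\mathcal{C}_{p,r}(\om)\le A_p(\om)$. For strictness I would take $\om(z)=e^{|z|}$. By the distortion estimate used in the proof of Lemma~\ref{le:weightdistor}, $\om$ is comparable to the constant $e^{|z_0|}$ on every square $Q_r(z_0)$, with a comparability constant depending only on $r$; this makes $\mathcal{C}_{p,r}(\om)<\infty$, so $\om\in\Ap$ for all $p\ge 1$. On the other hand, evaluating the classical $A_p$ quotient on large squares centered at the origin makes $\om(Q)/|Q|$ grow exponentially, while $\om'(Q)/|Q|$ (with $\om'=e^{-(p'/p)|z|}$ globally integrable) decays like $|Q|^{-1}$; hence the product blows up and $A_p(\om)=\infty$, and the same computation with $\text{infess}_Q\om=1$ handles $p=1$.

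\textbf{Main obstacle.} The inclusions are one-line, and parts \eqref{item:aprest1} and \eqref{item:aprest3} are Hölder/Jensen bookkeeping (the only delicate point being the truncation needed in the converse of \eqref{item:aprest3} to avoid integrability issues for $\om^{-p'/p}$). The genuine content lies in the strictness examples of \eqref{item:aprest2} and \eqref{item:aprest4}: one must check carefully both that these weights satisfy the \emph{local} restricted condition via the distortion inequality, and that they fail the relevant \emph{global} classical condition (non-integrability of $|z|^{-2}$ near the origin, respectively exponential growth on large squares), which is exactly the place where the restricted class $\Ap$ genuinely departs from the Muckenhoupt class $A_p$.
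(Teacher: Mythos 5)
Your proof is correct and follows essentially the same route as the paper's: H\"older/Jensen for the inclusions, a regularized test function $f\approx\om^{-p'/p}$ for the converse of \eqref{eq:meanapdesc}, a power weight whose conjugate fails to be locally integrable at the origin for the strictness in part (iii), and a weight that is essentially constant on squares of fixed side length but violates the global Muckenhoupt condition for part (iv). The only differences are cosmetic: you truncate $\min\{\om^{-p'/p},m\}$ where the paper uses $(\om+\ep)^{-p'/p}$ and argues the $p=1$ converse via the limit $p\to1^+$; you take the boundary exponent $|z|^{2(p-1)}$ where the paper takes $|z|^{2(\delta-1)}$ with $\delta\in(p,q)$; and you use the single example $e^{|z|}$ for all $p\ge1$ in (iv), where the paper uses $(1+|z|^2)^{p-1}$ for $p>1$ and reserves $e^{|z|}$ for $p=1$.
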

\begin{proof}
\eqref{item:aprest1} follows from the definition of  $\Ap$-weights.
Let us prove \eqref{item:aprest3}. If $p>1$,  H\"older's inequality  gives that any $\om\in \Ap$ satisfies \eqref{eq:meanapdesc}. Conversely  if  a
weight satisfies \eqref{eq:meanapdesc}, by choosing
$f=(\om+\ep)^{-p'/p}$, $\ep>0$, and taking limit $\ep\to
0^+$, we deduce that $\om\in\Ap$. Now,
if $\om\in A_1^{restricted}$, then for any $f \geq 0$, and any square $Q$,
\begin{equation}\label{eqn:a1restricted}
\frac{1}{|Q|}\int_{Q} f dA\leq \frac{C_r}{ \om(Q)}\int_Q f\om dA.
\end{equation}
On the other hand, if $\om$ satisfies \eqref{eqn:a1restricted} then H\"older's inequality gives that for any $1<p<\infty$,
\begin{equation}\label{eqn:Ap}\left(\frac{1}{|Q|}\int_{Q} f dA\right)^p\leq \frac{C_r^p}{ \om(Q)}\int_Q f^p\om dA,
\end{equation}
which implies that $\om\in \Ap$ and $\mathcal{C}_{p,r}(\om)\leq C_r^p$.
Since
$$\lim_{p\to 1^+}\left(  \frac{1}{|Q|}\int_{Q} \om^{-\frac{p'}{p}}\,dA
\right)^{\frac{p}{p'}}=
\text{supess}_{u\in
Q}\om^{-1}(u),
$$
  we conclude that $\om\in A_1^{restricted}$.

 \eqref{item:aprest2} for $p>1$ is a consequence of H\"older's inequality, where  $\om(z)=|z|^{2(\delta-1)}$, for $\delta\in (p,q)$ proves that the inclusion is strict.
If  $\om\in A_1^{restricted}$, the assertion follows from the fact that  \eqref{eqn:Ap} holds for any $1<p<\infty$.

Finally, it is
clear that $A_p\subset \Ap$. In order to see that the embedding
is strict, one can choose the weight $\om(z)=(1+|z|^2)^{p-1}$, for
$p>1$. In fact,
 Lemma~\ref{le:weightdistor}
 shows that $\om\in\Ap$ and since
\begin{equation}\begin{split}\label{eq:ex1}
&\frac{1}{|D(0,r)|} \int_{D(0,r)} \om\,dA\left(
\frac{1}{|D(0,r)|}\int_{D(0,r)} \om^{-p'/p}\,dA
\right)^{\frac{p}{p'}}
\\ &
\asymp \left(\log\frac{1}{1+r^2}\right)^{p-1}\to \infty, \quad r\to \infty,
\end{split}\end{equation} $\om$ is not a classical $A_p$-Muckenhoupt
weight.
 As for $p=1$, $\om(z)=e^{|z|}\in A^{restricted}_1\setminus A_1$.
\end{proof}

The  following lemma is proved in
\cite[Lemma~3.4]{IsraJOT2014}.

\begin{letterlemma}\label{le:isra34}
	For
each $r > 0$, let $r\mathbb{Z}^2$ denote the set $\{rk_1 +i rk_2:\,
k_1,k_2\in \mathbb{Z}\}$. If  $\om\in
	\Ap$, then there is a constant $M=M(\om,r,p)$ such that
	$$\frac{\om (Q_{r}(\nu))}{\om (Q_{r}(\nu'))}\le M^{|\nu-\nu'|}$$
	for all $\nu,\nu'\in r\mathbb{Z}^2$.

\end{letterlemma}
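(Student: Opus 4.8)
The plan is to follow the classical strategy for Muckenhoupt weights (cf. \cite[p.~195]{SteinHarmonic93}), but to run it through the \emph{scale-fixed} mean-value characterization of $\Ap$ rather than a scale-invariant one. The argument splits into a local comparison between neighbouring lattice squares and a chaining step, and it works uniformly for all $p\ge 1$.

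First I would prove the adjacency estimate: there is a constant $M_0=M_0(\om,r,p)\ge 1$ such that $\om(Q_r(\nu))\asymp\om(Q_r(\nu'))$ whenever $\nu,\nu'\in r\Z^2$ with $|\nu-\nu'|=r$. Given two such horizontally or vertically adjacent squares, let $\tilde Q$ be the square of side length $2r$ centred at the midpoint $(\nu+\nu')/2$; then $\tilde Q\supset Q_r(\nu)\cup Q_r(\nu')$ and $|Q_r(\nu)|/|\tilde Q|=1/4$. Applying inequality~\eqref{eq:meanapdesc} of Proposition~\ref{prop:aprest} at scale $2r$ to $\tilde Q$ with the test function $f=\chi_{Q_r(\nu)}$ gives
\begin{equation*}
4^{-p}=\left(\frac{1}{|\tilde Q|}\int_{\tilde Q}\chi_{Q_r(\nu)}\,dA\right)^p\le \frac{C_{2r}}{\om(\tilde Q)}\int_{\tilde Q}\chi_{Q_r(\nu)}\,\om\,dA=\frac{C_{2r}}{\om(\tilde Q)}\,\om(Q_r(\nu)),
\end{equation*}
so that $\om(\tilde Q)\le 4^p C_{2r}\,\om(Q_r(\nu))$, and symmetrically $\om(\tilde Q)\le 4^pC_{2r}\,\om(Q_r(\nu'))$. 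Since trivially $\om(Q_r(\nu)),\om(Q_r(\nu'))\le\om(\tilde Q)$, this yields the desired two-sided comparison with $M_0=4^pC_{2r}$. Here I use that \eqref{eq:meanapdesc} is available at the doubled scale $2r$, which is exactly the content of Proposition~\ref{prop:aprest}\,\eqref{item:aprest3} (``for some, equivalently for any, $r$''), the constant $C_{2r}$ depending only on $\om,p,r$; the same estimate covers the case $p=1$ directly, since there \eqref{eq:meanapdesc} reduces to \eqref{eqn:a1restricted}. Note also $M_0\ge 1$, immediate from $C_{2r}\ge 4^{-p}$.

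Finally I would chain. Writing $\nu=r(k_1+ik_2)$ and $\nu'=r(k_1'+ik_2')$, I connect $\nu$ to $\nu'$ by a lattice path of $N=|k_1-k_1'|+|k_2-k_2'|$ unit steps, each joining neighbouring points of $r\Z^2$. Multiplying the adjacency estimate along the path gives $\om(Q_r(\nu))/\om(Q_r(\nu'))\le M_0^{N}$. Since $N\le\sqrt{2}\,|\nu-\nu'|/r$, setting $M=M_0^{\sqrt{2}/r}$ produces $\om(Q_r(\nu))/\om(Q_r(\nu'))\le M^{|\nu-\nu'|}$, as required (the bound is consistent at $N=0$ because $M\ge 1$).

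The only delicate point is the adjacency step, and even there the work is light: the single genuine ingredient is that the mean-value inequality \eqref{eq:meanapdesc} holds at the doubled scale $2r$, which lets two neighbouring squares be compared through the common container $\tilde Q$. Everything else is elementary geometry and bookkeeping.
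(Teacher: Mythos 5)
Your argument is sound, but note that the paper does not prove this lemma at all: it is quoted verbatim from \cite[Lemma~3.4]{IsraJOT2014}, so your chaining proof is a self-contained substitute rather than a variant of anything in the text. The two steps — a two-sided comparison of adjacent lattice squares through a common container, followed by multiplication along a lattice path of length $N\le\sqrt{2}\,|\nu-\nu'|/r$ — are both correct, the constants $M_0=4^pC_{2r}\ge 1$ and $M=M_0^{\sqrt2/r}$ check out, and the exponential (rather than uniform) bound is exactly what the chaining produces.

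The one point you should treat more carefully is the appeal to \eqref{eq:meanapdesc} \emph{at the doubled scale} $2r$. For $p>1$ the scale-independence of $A_{p,r}$ is Theorem~\ref{th:isra}, an external result, so there is no problem. For $p=1$, however, the equivalence of $A_{1,r}$ over different values of $r$ is established in this paper only in Proposition~\ref{pr:p=1}, whose proof of \eqref{item:p=11}$\Rightarrow$\eqref{item:p=13} goes through Lemma~\ref{le:2}, which in turn invokes Lemma~\ref{le:isra34} — precisely the statement you are proving. As written, your "the same estimate covers the case $p=1$ directly" therefore risks circularity within the paper's logical order. This is easily repaired in either of two ways: (a) observe that $\om\in\Auno$ implies $\om\in A_{2,r}$ \emph{at the same scale} $r$ by the elementary H\"older step \eqref{eqn:Ap} in the proof of Proposition~\ref{prop:aprest}, and then apply Theorem~\ref{th:isra} to pass to scale $2r$ with $p=2$; or (b) avoid changing scale altogether by replacing the container $\tilde Q$ with the shifted square $Q_r\bigl((\nu+\nu')/2\bigr)$, which meets each of $Q_r(\nu)$, $Q_r(\nu')$ in a set of measure $r^2/2$, and applying \eqref{eq:meanapdesc} at scale $r$ to the three squares with $f=\chi_{Q_r(\nu)\cap Q_r((\nu+\nu')/2)}$ and its companion; this yields $\om(Q_r(\nu))\asymp\om\bigl(Q_r((\nu+\nu')/2)\bigr)\asymp\om(Q_r(\nu'))$ with $M_0=2^pC_r$ and no scale-independence input. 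With either fix the proof is complete and uniform in $p\ge1$.
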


 \begin{remark}\label{rem:discoscomparables}
 {\rm
The above lemma shows that  for each $L>0$, $\om(Q_r(\nu))\asymp \om(Q_r(\nu'))$ if  $|\nu-\nu'|<L$ and $\om\in\Ainfty$.
So, fixed $N>r$ we have that $$\om(Q_r(\nu))\asymp \om(Q_{Nr}(\nu')).$$
 Combining these results it follows that
 $$
 \om(Q_r(z))\asymp \om(Q_R(w)),\quad\text{for any $z,\,w$ such that $|z-w|<L$,}
 $$
whenever $\om\in\Ainfty$, $r<L$ and $R\in(r,Nr)$.

 In particular, if $\om\in\Ainfty$, for any $t>0$ and $N\in\N$ there is
 $C=C(\om,N,p)$ such that
 \begin{equation}\label{eq:discoscomparables}
 \om\left( D(a,t)\right)\le  \om\left( D(a,Nt)\right)\le C \om\left(
 D(a,t)\right),\quad a\in\C.
 \end{equation}
 \par
 As a consequence, squares of fixed length can be replaced by
discs of  fixed radius in the study of $\Ap$ weights.
}
\end{remark}
Now, let us observe that the class $\Ap$  is invariant under
translations.
\begin{lemma}\label{le:translation}
	Let $p\in [1,\infty)$. Then, a weight $\om$  belongs to $\Ap$ if and
	only for any (equivalently for some) $a\in\C$, $\om_{[a]}(u):=\om(a+u)\in \Ap$.
	Moreover,
	$$\mathcal{C}_{p,r}(\om)=\mathcal{C}_{p,r}(\om_{[a]}).$$
\end{lemma}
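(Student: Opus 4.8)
The plan is to reduce everything to the single identity $\mathcal{C}_{p,r}(\om)=\mathcal{C}_{p,r}(\om_{[a]})$, from which the stated equivalence follows at once: by Theorem~\ref{th:isra} (together with Proposition~\ref{pr:p=1} for the endpoint $p=1$), membership in $\Ap$ is equivalent to $\mathcal{C}_{p,r}(\cdot)<\infty$ for some (equivalently any) $r>0$, so the equality of the two constants forces $\om\in\Ap$ precisely when $\om_{[a]}\in\Ap$. Note also that $\om_{[a]}(u)=\om(a+u)>0$ a.e. if and only if $\om>0$ a.e., since translations preserve Lebesgue-null sets, so the standing positivity hypothesis transfers freely.

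The core computation is a change of variables. Fix $r>0$ and a square $Q$ with $l(Q)=r$. Writing $v=a+u$ and using the translation invariance of Lebesgue area measure (so that $dA(v)=dA(u)$ and $|a+Q|=|Q|$), one has for any nonnegative measurable $g$
\begin{equation*}
\frac{1}{|Q|}\int_{Q}g(a+u)\,dA(u)=\frac{1}{|a+Q|}\int_{a+Q}g(v)\,dA(v).
\end{equation*}
Applying this with $g=\om$ and with $g=\om^{-p'/p}$ when $1<p<\infty$, the product of averages that appears under the supremum defining $\mathcal{C}_{p,r}(\om_{[a]})$, evaluated at $Q$, equals the corresponding product for $\om$ evaluated at the translated square $a+Q$. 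In the endpoint case $p=1$ the same substitution gives $\om_{[a]}(Q)=\om(a+Q)$ and $\text{infess}_{u\in Q}\,\om_{[a]}(u)=\text{infess}_{v\in a+Q}\,\om(v)$, so again the quotient over $Q$ for $\om_{[a]}$ coincides with the quotient over $a+Q$ for $\om$.

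Finally I would take suprema. Since the map $Q\mapsto a+Q$ is a bijection of the family of all squares of side length $r$ onto itself, letting $Q$ run over every square with $l(Q)=r$ makes $a+Q$ run over exactly the same family; hence the two suprema agree and $\mathcal{C}_{p,r}(\om_{[a]})=\mathcal{C}_{p,r}(\om)$ for every $r>0$, covering both $1<p<\infty$ and $p=1$ simultaneously. There is no genuine obstacle here: the only point requiring a moment's care is this last bijectivity remark, which guarantees that translating the weight is matched exactly by translating the family of test squares, so that no squares are lost or gained in passing to the supremum.
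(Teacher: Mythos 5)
Your proof is correct and is essentially the paper's own argument: the authors dispose of this lemma in one line by citing the translation invariance of Lebesgue measure and the definition of $\Ap$, which is precisely the change-of-variables and square-bijection computation you carry out in detail.
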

\begin{proof}
	The proof follows immediately from the invariance under translations of the Lebesgue measure and the definition of the $\Ap$-weights.
	
\end{proof}

 A fundamental property of the classical Muckenhoupt weights is the reverse H\"older inequality \cite[Theorem~7.4]{Duandilibro}.
 So, it is natural to ask whether or not  for each  $\Ap$-weight $\om$
 there exists $\ep>0$ and  $C(\ep,r)>0$ such that
\begin{equation}\label{eq:reverse}
\left(\frac{1}{|Q|}\int_{Q}
\om^{1+\ep}(\z)\,dA(\z)\right)^{\frac{1}{1+\ep}} \ \le C(\ep,r)
\frac{1}{|Q|}\int_{Q} \om(\z)\,dA(\z),
\end{equation}
for every cube $Q$ with $l(Q)=r$.
The following considerations provide a negative answer to this question.

\begin{remark}

\begin{itemize}
\item
The weight
  $\om(z)=|z|^{-2}\left(\log \frac{1}{|z|}\right)^{-2}
\in A^{restricted}_2$,  but for each $\ep>0$, $\om^{1+\ep}\notin A_2^{restricted}$. So $\om$ does not satisfy  \eqref{eq:reverse}.
\item
The weight
$$
v(z)=|z|^{2}\left(\log \frac{1}{|z|}\right)^{2}$$ is in $
A^{restricted}_2\setminus \cup_{1<p<2}\Ap
.$
 That is, the $\Ap$-weights
do not satisfy the natural analogue of the $(p-\e)$-condition which
holds for the classical $A_p$-Muckenhoupt weights. Moreover, sincee
\begin{equation*}\begin{split}
&\frac{1}{|D(0,r)|^2}\left( \int_{D(0,r)} v(z)\,dA(z) \right)\left(\int_{D(0,r)}
v^{-1}(z)\,dA(z)\right) \\ & \asymp
\log\frac{1}{r}\to \infty, \quad r\to 0,
\end{split}\end{equation*}
we deduce that for any  $r_0>0$, the $\Ap$-weights
do not
satisfy  the uniform condition
\begin{equation*}
\sup_{Q_r, r\le r_0} \frac{1}{r^{2p}} \int_{Q_r}
\om\,dA\left( \int_{Q_r} \om^{-\frac{p'}{p}}\,dA
\right)^{\frac{p}{p'}}<\infty.
\end{equation*}
\end{itemize}
\end{remark}

\subsection{The class $\Ainfty$}\quad\par

\par In the classical setting, there are a good number of equivalent conditions which describe
the class  $A_\infty=\bigcup_{1\le q<\infty} A_q$
 (see \cite{DMOMathZ16}, \cite[Chapter~5]{SteinHarmonic93} or
\cite[p.~149]{Duandilibro}). So, it is natural to ask
whether or not the class $\bigcup_{q\ge 1}\Aq$ can be described by
neat analogous conditions to those describing the class $A_\infty$.
With this aim, we introduce the following class of weights.

\begin{definition}
 A weight $\om$ satisfies the
$KT_r$-property, $r>0$, if there exist constants $r>0$, $\delta\in (0,1)$ and
$C_r>0$ such that for any square $Q$ with
 $l(Q)=r$ and every
measurable set $E\subset Q$ it holds that
\begin{equation}\label{eq:KTr}
\frac{|E|}{|Q|}\le C_r\left( \frac{\om(E)}{\om(Q)}\right)^{\delta}.
\end{equation}
\end{definition}

If we replace in \eqref{eq:KTr} the constant $C_r$ by an absolute constant $C$
and $Q$ runs over all the squares $Q$, it is obtained a  condition
which describes
the class $A_\infty$ of
Muckenhoupt weights \cite[Theorem~3.1]{DMOMathZ16}. It was
introduced by Kerman and Torchinsky in \cite[Proposition~1]{KT} in
order to describe the restricted weak-type for the Hardy-Littlewood
maximal operator. It follows from \cite[Theorem~3.1]{DMOMathZ16} (a
result which holds for general basis) that, for each $r>0$, a weight
$\om\in \Ainfty$  if and only if it satisfies
the $KT_r$-property.
For the sake of completeness  we offer a
direct short proof  based on the ideas
of the proof of \cite[Thorem~3.1]{DMOMathZ16}.

\begin{letterproposition}\label{pr:DKS}
Let $\om$ be a weight. Then, the following conditions are
equivalent:
\begin{enumerate}
\item \label{item:DKS1} $\om \in \Ainfty$;
\item \label{item:DKS2} $\om$ satisfies the Kerman-Torchinsky
$KT_{r}$-property for any $r>0$;
\item \label{item:DKS3} $\om$ satisfies the Kerman-Torchinsky
$KT_{r}$-property for some $r>0$.
\end{enumerate}
\end{letterproposition}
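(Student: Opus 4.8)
The plan is to establish the cyclic chain \eqref{item:DKS1}$\Rightarrow$\eqref{item:DKS2}$\Rightarrow$\eqref{item:DKS3}$\Rightarrow$\eqref{item:DKS1}. The middle implication \eqref{item:DKS2}$\Rightarrow$\eqref{item:DKS3} is trivial, and \eqref{item:DKS1}$\Rightarrow$\eqref{item:DKS2} is a short application of H\"older's inequality, so essentially all the content lies in the return passage \eqref{item:DKS3}$\Rightarrow$\eqref{item:DKS1}, which I expect to be the main obstacle.

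For \eqref{item:DKS1}$\Rightarrow$\eqref{item:DKS2}, suppose $\om\in\Aq$; by the strict inclusion in Proposition~\ref{prop:aprest}\,\eqref{item:aprest2} I may assume $q>1$. Fix $r>0$, a square $Q$ with $l(Q)=r$, and a measurable $E\subset Q$. I would split $1=\om^{1/q}\om^{-1/q}$ and apply H\"older with exponents $q,q'$, getting
\[
|E|\le \om(E)^{1/q}\Big(\int_Q\om^{-q'/q}\,dA\Big)^{1/q'}.
\]
The definition of $\mathcal{C}_{q,r}(\om)$ bounds the last factor by $\mathcal{C}_{q,r}(\om)^{1/q}|Q|\,\om(Q)^{-1/q}$, and dividing by $|Q|$ produces \eqref{eq:KTr} with $\delta=1/q\in(0,1)$ and $C_r=\mathcal{C}_{q,r}(\om)^{1/q}$. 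Since $\mathcal{C}_{q,r}(\om)<\infty$ for every $r$ by Theorem~\ref{th:isra}, the $KT_r$-property follows for all $r$.

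The core is \eqref{item:DKS3}$\Rightarrow$\eqref{item:DKS1}. Assume $KT_r$ for a fixed $r$, with constants $\delta\in(0,1)$ and $C_r$; applying \eqref{eq:KTr} to $E=\{z\in Q:\om(z)=0\}$ first shows $\om>0$ a.e. Writing $\om_Q:=\om(Q)/|Q|$ and $E_\lambda:=\{z\in Q:\om(z)<\lambda\,\om_Q\}$, the elementary bound $\om(E_\lambda)\le\lambda\,\om(Q)$ together with \eqref{eq:KTr} yields the key distributional estimate $|E_\lambda|\le C_r\lambda^{\delta}|Q|$. I would then fix $s\in(0,\delta)$ and control the negative moment by the layer-cake formula: since $|E_t|\le|Q|\min(1,C_rt^{\delta})$,
\[
\frac{1}{|Q|}\int_Q\Big(\frac{\om}{\om_Q}\Big)^{-s}dA=\int_0^\infty s\,t^{-s-1}\frac{|E_t|}{|Q|}\,dt\le 1+\frac{sC_r}{\delta-s}=:K,
\]
where the convergence at $t=0$ is exactly what $s<\delta$ guarantees. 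Hence $\frac{1}{|Q|}\int_Q\om^{-s}\,dA\le\om_Q^{-s}K$. Choosing $q:=1+1/s>1$, so that $q'/q=s$ and $q/q'=1/s$, the two factors combine to
\[
\Big(\tfrac{1}{|Q|}\int_Q\om\,dA\Big)\Big(\tfrac{1}{|Q|}\int_Q\om^{-q'/q}\,dA\Big)^{q/q'}\le \om_Q\big(\om_Q^{-s}K\big)^{1/s}=K^{1/s},
\]
a bound independent of $Q$. Thus $\mathcal{C}_{q,r}(\om)\le K^{1/s}<\infty$, i.e. $\om\in A_{q,r}$, and Theorem~\ref{th:isra} upgrades this to $\om\in\Aq\subset\Ainfty$.

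The hard part is this last implication, and within it the delicate point is the interplay of the two exponents: the decay rate $\delta$ furnished by $KT_r$ must strictly exceed the negative power $s=q'/q$ of $\om$ being integrated, which simultaneously forces the choice $q>1+1/\delta$ and makes the layer-cake integral converge at the origin. The second subtlety worth checking is that the final estimate for $\mathcal{C}_{q,r}(\om)$ is genuinely uniform in $Q$, since it is this uniformity---rather than a bound for a single square---that certifies membership in $A_{q,r}$ and hence in $\Ainfty$.
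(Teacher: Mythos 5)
Your argument is correct and follows essentially the same route as the paper: H\"older's inequality gives \eqref{item:DKS1}$\Rightarrow$\eqref{item:DKS2} with $\delta=1/q$, and for \eqref{item:DKS3}$\Rightarrow$\eqref{item:DKS1} one derives from $KT_r$ a power-decay estimate for the distribution of the sets where $\om$ is small and integrates it to obtain $A_{q,r}$ for a suitable $q>1$, concluding via Theorem~\ref{th:isra}. The only cosmetic difference is that you bound the Lebesgue measure $|E_t|$ and apply the unweighted layer-cake formula, whereas the paper bounds $\om(E_\lambda)$ and integrates $\lambda^{p'-1}\om(E_\lambda)\,d\lambda$ against $d\om=\om\,dA$; both yield the same uniform control of $\mathcal{C}_{q,r}(\om)$ over all squares of side $r$.
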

\begin{proof}

\par \eqref{item:DKS1} $\Rightarrow$ \eqref{item:DKS2}. Fix $r>0$ and take $p>1$ such that $\om\in
A_{p,r}$. Then, if 
 $l(Q)=r$ and $E\subset Q$,
then
\begin{equation*}\begin{split}
|E| &\le
\om(E)^{1/p}\left(\om'(E)\right)^{1/p'} \le
\om(E)^{1/p}\left(\om'(Q)\right)^{1/p'}
\\ & \le
\mathcal{C}^{1/p}_{p,r}(\om)\om(E)^{1/p}|Q|^{1/p'}\left(\frac{|Q|}{\om(Q)}\right)^{1/p},
\end{split}\end{equation*}
that is
$\frac{|E|}{|Q|}\le\mathcal{C}^{1/p}_{p,r}(\om)\left(\frac{\om(E)}{\om(Q)}\right)^{1/p},$
which gives \eqref{item:DKS2} with $\delta=\frac{1}{p}$.
\par \eqref{item:DKS3} $\Rightarrow$ \eqref{item:DKS1}. Let $Q$ be a square with $l(Q)=r$.
For each $\lambda>0$, let us denote $E_\lambda=\{z\in Q:
\om(z)<\frac{1}{\lambda}\}$. Then, by hypothesis there is $\delta\in
(0,1)$ and $C_r>0$
$$
\lambda\om(E_\lambda)\le |E_\lambda|\le
C_r|Q|\left(\frac{\om(E_\lambda)}{\om(Q)}\right)^{\delta},$$ that is
\begin{equation*}
\lambda^{\frac{1}{1-\delta}}\om(E_\lambda)\le
C_{r,\delta}\left(\frac{|Q|}{\om(Q)^\delta}\right)^{\frac{1}{1-\delta}}.
\end{equation*}
Take $p$ such that $p'\in \left(1, \frac{1}{1-\delta}\right)$. Then,
if we denote $d\om=\om dA$, for any $M>0$
\begin{equation*}\begin{split}
\int_{Q}\om^{-p'/p}\,dA & =
\int_{Q}\om^{-p'}\,d\om=p'\int_{0}^\infty
\lambda^{p'-1}\om(E_\lambda)\,d\lambda \\ & \le \om(Q) p'\int_{0}^M
\lambda^{p'-1}\,d\lambda+C_{r,\delta}\left(\frac{|Q|}{\om(Q)^\delta}\right)^{\frac{1}{1-\delta}}\int_{M}^\infty
\lambda^{p'-1-\frac{1}{1-\delta}}\,d\lambda
\\ &=\om(Q)
M^{p'}+C_{r,p,\delta}\left(\frac{|Q|}{\om(Q)^\delta}\right)^{\frac{1}{1-\delta}}M^{p'-\frac{1}{1-\delta}},
\end{split}\end{equation*}
so choosing $M=\frac{|Q|}{\om(Q)}$ we get
$$\frac{\int_{Q}\om^{-p'+1}\,dA}{|Q|}\le
C_{r,p,\delta}\left(\frac{|Q|}{\om(Q)}\right)^{p'-1},$$ that is
$\om\in A_{p,r}$, which together with Theorem~\ref{th:isra} finishes
the proof.
\end{proof}

\subsection{The class $A_1^{restricted}$}
The primary  aim of this section consists on proving the following result.
\begin{proposition}\label{pr:p=1}
Let $\beta>0$ and $\om$ be a weight  such
that $\om(z)>0$ a.e. on $\C$. Then, the following
conditions are equivalent;
\begin{enumerate}
\item \label{item:p=11} $\om$ belongs to $A_{1,r}$ for some $r>0$;
\item \label{item:p=12} $\om$ belongs to $A_{1,r}$ for any  $r>0$;
\item \label{item:p=13} For any $\alpha>0$ there is a positive constant $C$ such
that
$$\widetilde{\om}^{(\a)}(z)\le C \om(z)\quad\text{a.e. $z\in\C$;}$$
\item \label{item:p=14} $P_\beta$ is bounded on $\mathcal{L}^1_{\beta,\om}$;
\item \label{item:p=15} $P^+_\beta$ is bounded on $\mathcal{L}^1_{\beta,\om}$.
\end{enumerate}
\end{proposition}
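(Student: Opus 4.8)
The plan is to prove the cycle of equivalences \eqref{item:p=11} $\Rightarrow$ \eqref{item:p=12} $\Rightarrow$ \eqref{item:p=13} $\Rightarrow$ \eqref{item:p=15} $\Rightarrow$ \eqref{item:p=14} $\Rightarrow$ \eqref{item:p=11}, mirroring the $p>1$ case of Theorem~\ref{th:isra} but handling the essential-infimum formulation of $A_{1,r}$ carefully. The implication \eqref{item:p=14} $\Rightarrow$ \eqref{item:p=11} is the standard direction: testing the boundedness of $P_\beta$ on $\mathcal{L}^1_{\beta,\om}$ against normalized reproducing kernels (or indicator-type functions supported on a square $Q$) forces the $A_{1,r}$ inequality, since the kernel $e^{\beta\ov\z z}$ produces a lower bound of the form $\om(Q)/|Q|$ while the norm on the right is controlled by $\text{infess}_{Q}\,\om$. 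The trivial implication \eqref{item:p=15} $\Rightarrow$ \eqref{item:p=14} holds because $|P_\beta(f)|\le P^+_\beta(|f|)$ pointwise.

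The geometric heart of the argument is \eqref{item:p=11} $\Rightarrow$ \eqref{item:p=12}, the independence of the $A_{1,r}$ condition on the scale $r$. I would establish this through a covering argument: a square of side $R$ can be tiled by finitely many squares of side $r$, and conversely, so that $\om(Q_R)\asymp\sum\om(Q_r)$; combined with Lemma~\ref{le:isra34} (which gives $\om(Q_r(\nu))\asymp\om(Q_r(\nu'))$ for neighboring lattice points) and Remark~\ref{rem:discoscomparables}, the averages $\om(Q)/|Q|$ at different scales are comparable. The one delicate point is that the essential infimum appearing in $\mathcal{C}_{1,r}(\om)$ behaves well under refining the square: passing from scale $r$ to a smaller scale only decreases the admissible essential infimum over subsquares, so one must verify that the constant stays uniformly bounded when enlarging the scale, which is where the doubling consequence of Lemma~\ref{le:isra34} is used.

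For the Berezin-transform reformulation \eqref{item:p=12} $\Leftrightarrow$ \eqref{item:p=13}, I would discretize the Gaussian kernel $e^{-\alpha|z-u|^2}$ by a dyadic-annulus decomposition centered at $z$, writing $\widetilde{\om}^{(\a)}(z)\asymp\sum_{\nu}e^{-c|z-\nu|^2}\om(Q_r(\nu))$ over a lattice $r\Z^2$. The exponential decay of the Gaussian against the at-most-exponential growth of $\om(Q_r(\nu))$ guaranteed by Lemma~\ref{le:isra34} makes this series converge and be dominated by the single central term $\om(Q_r(z))/|Q_r(z)|\asymp\om(Q)/|Q|$; thus $\widetilde{\om}^{(\a)}(z)\lesssim\om(z)$ a.e.\ is equivalent to the $A_{1,r}$ bound $\om(Q)/|Q|\lesssim\text{infess}_{Q}\,\om$. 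Here the key is that the scale-independence from the previous step lets me choose $r$ matched to $\alpha$.

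The main obstacle I expect is the remaining implication \eqref{item:p=13} $\Rightarrow$ \eqref{item:p=15}, proving boundedness of $P^+_\beta$ on $\mathcal{L}^1_{\beta,\om}$ from the pointwise Berezin bound. The natural route is a Schur-type or direct Fubini estimate: for $f\in\mathcal{L}^1_{\beta,\om}$ I would write
\begin{equation*}
\|P^+_\beta(f)\|_{\mathcal{L}^1_{\beta,\om}}=\frac{\beta}{\pi}\int_{\C}\om(z)e^{-\beta\frac{|z|^2}{2}}\int_{\C}|f(\z)|\,|e^{\beta\ov\z z}|e^{-\beta|\z|^2}\,dA(\z)\,dA(z),
\end{equation*}
and then switch the order of integration to bring the $z$-integral against a single $\z$. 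The inner integral $\int_{\C}\om(z)e^{-\beta\frac{|z|^2}{2}}|e^{\beta\ov\z z}|\,dA(z)$, after completing the square in the exponent, is exactly a Gaussian convolution of $\om$ centered at $\z$, i.e.\ a constant multiple of $\widetilde{\om}^{(\beta)}(\z)e^{\beta\frac{|\z|^2}{2}}$; applying hypothesis \eqref{item:p=13} bounds this by $C\,\om(\z)e^{\beta\frac{|\z|^2}{2}}$, which collapses the double integral to $C\|f\|_{\mathcal{L}^1_{\beta,\om}}$. The care needed is in completing the square correctly so that the Berezin transform emerges with the right parameter, and in justifying Fubini via the positivity of the integrand.
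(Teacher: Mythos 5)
Your plan is correct and would compile into a full proof, but it takes a genuinely different route from the paper on the operator-theoretic implications. The paper handles \eqref{item:p=14} and \eqref{item:p=15} in one stroke by duality: it identifies $(\mathcal{L}^1_{\beta,\om})^\star$ with $L^\infty$ under the weighted pairing, computes the adjoint $P^\star_\beta$ explicitly, and observes that its $L^\infty$-boundedness is verbatim the Berezin condition \eqref{item:p=13}; necessity then comes for free by testing with the unimodular functions $f_z(\zeta)=e^{-i\beta\,\mathrm{Im}(z\bar{\zeta})}$, which remove the oscillation of the kernel. You instead prove \eqref{item:p=13}$\Rightarrow$\eqref{item:p=15} by a direct Tonelli/Schur estimate --- correct, more elementary, and entirely on the $L^1$ side --- and \eqref{item:p=14}$\Rightarrow$\eqref{item:p=11} by testing on indicators of small sets; the latter works but needs care you do not spell out: the support $E$ must shrink with $|u_0|$ to control the phase $\beta\,\mathrm{Im}(\bar{\zeta}z)$ and avoid cancellation in $P_\beta(\chi_E)$, and one needs a final doubling step to convert $\om(Q_r(u))\lesssim\om(u)$ a.e.\ into the $A_{1,r}$ inequality over one common square. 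Two smaller remarks: completing the square in $\int\om(z)e^{-\beta|z|^2/2}|e^{\beta\bar{\zeta}z}|\,dA(z)$ produces the Gaussian $e^{-\beta|z-\zeta|^2/2}$, i.e.\ the Berezin transform with parameter $\beta/2$ rather than $\beta$ (harmless, since \eqref{item:p=13} is assumed for every $\alpha>0$); and Lemma~\ref{le:isra34} is stated for the scale-free class, so before invoking it in \eqref{item:p=11}$\Rightarrow$\eqref{item:p=12} you should first upgrade $A_{1,r}$ for a single $r$ to membership in $\Ainfty$ via Proposition~\ref{prop:aprest} and Theorem~\ref{th:isra}. This last point is essentially how the paper proceeds: it proves \eqref{item:p=11}$\Rightarrow$\eqref{item:p=13} by translating and invoking Lemma~\ref{le:2} (which packages exactly your lattice summation of the Gaussian against $\om$), and gets \eqref{item:p=13}$\Rightarrow$\eqref{item:p=12} for free from the lower bound $e^{-2\alpha r^2}\om(Q)\lesssim\widetilde{\om}^{(\alpha)}(z)$ on a fixed square. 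Both routes are valid; the paper's duality argument is shorter, while yours avoids computing the adjoint altogether.
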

Some preliminary results, which will also be  used to prove Theorem~\ref{th:LPformula}, are needed.

\begin{lemma}\label{le:2}
Assume that  $\om\in \Ainfty$. Then, there exists $r_0$ such that for
any $r\in (0,r_0)$ and for any $\b>0$
$$\int_{\C} e^{-\beta|z|^2}\om(z)\,dA(z)\le C(\om,r,\beta)\om\left(Q_r(0)\right)<\infty.$$
\end{lemma}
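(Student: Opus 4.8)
The plan is to tile the plane by the lattice of squares $\{Q_r(\nu):\nu\in r\Z^2\}$ and estimate the integral square by square, exploiting that an $\Ainfty$ weight can grow at most exponentially along this lattice. First I would use $\om\in\Ainfty=\bigcup_{1\le p<\infty}\Ap$ to fix $p$ with $\om\in\Ap$, and apply Lemma~\ref{le:isra34} with $\nu'=0$ to obtain a constant $M=M(\om,r,p)$ with
\begin{equation*}
\om(Q_r(\nu))\le M^{|\nu|}\,\om(Q_r(0)),\qquad \nu\in r\Z^2,
\end{equation*}
where $|\nu|$ denotes the modulus of the lattice point. Since the squares $Q_r(\nu)$ tile $\C$ with disjoint interiors, the integral splits as
\begin{equation*}
\int_{\C}e^{-\b|z|^2}\om(z)\,dA(z)=\sum_{\nu\in r\Z^2}\int_{Q_r(\nu)}e^{-\b|z|^2}\om(z)\,dA(z).
\end{equation*}

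Next I would replace $e^{-\b|z|^2}$ on each square by its size near the center. For $z\in Q_r(\nu)$ the triangle inequality gives $|z|\ge|\nu|-\tfrac{r}{\sqrt2}$, whence $|z|^2\ge|\nu|^2-\sqrt2\,r|\nu|$; this last inequality in fact holds for \emph{every} $\nu$, since its right-hand side is nonpositive when $|\nu|\le\sqrt2\,r$. Consequently
\begin{equation*}
e^{-\b|z|^2}\le e^{-\b|\nu|^2+\sqrt2\,\b r|\nu|},\qquad z\in Q_r(\nu),
\end{equation*}
and combining this with the weight bound above yields, for each $\nu$,
\begin{equation*}
\int_{Q_r(\nu)}e^{-\b|z|^2}\om(z)\,dA(z)\le e^{-\b|\nu|^2+(\sqrt2\,\b r+\log M)|\nu|}\,\om(Q_r(0)).
\end{equation*}

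Finally I would sum over the lattice, where the exponent is a downward quadratic in $|\nu|$:
\begin{equation*}
\int_{\C}e^{-\b|z|^2}\om(z)\,dA(z)\le\om(Q_r(0))\sum_{\nu\in r\Z^2}e^{-\b|\nu|^2+(\sqrt2\,\b r+\log M)|\nu|}=:C(\om,r,\b)\,\om(Q_r(0)).
\end{equation*}
The series converges because the Gaussian decay $e^{-\b|\nu|^2}$ dominates the at most exponential factor $e^{(\sqrt2\,\b r+\log M)|\nu|}$; and since $\om\in L^1_{loc}$ with $\om>0$ a.e., we have $0<\om(Q_r(0))<\infty$, which supplies the finiteness asserted in the statement. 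The one point requiring care — and the genuine content of the argument — is precisely this domination: Lemma~\ref{le:isra34} only guarantees that $\om(Q_r(\nu))$ grows at most like $M^{|\nu|}$, i.e.\ exponentially in $|\nu|$, and it is this a priori exponential (rather than Gaussian) growth bound, valid for an arbitrary $\Ainfty$ weight, that the quadratic Gaussian factor absorbs. I note that smallness of $r$ plays no essential role in this upper bound: the estimate holds for each fixed $r>0$ with $C$ depending on $r,\b,\om$ through $M$, so in particular for all $r\in(0,r_0)$ once any $r_0$ is fixed.
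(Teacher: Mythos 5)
Your proof is correct and follows essentially the same route as the paper: tile $\C$ by the lattice squares $Q_r(\nu)$, invoke Lemma~\ref{le:isra34} to bound $\om(Q_r(\nu))\le M^{|\nu|}\om(Q_r(0))$, and let the Gaussian factor absorb this exponential growth. The only (cosmetic) difference is how the cross term is handled: the paper first passes from $e^{-\b|z|^2}$ to $e^{-\a|z|^2-2\a|z|}$ with $\a\in(0,\b)$ and uses $|\nu|\le 1+|z|$, whereas you bound $|z|^2\ge|\nu|^2-\sqrt2\,r|\nu|$ directly and absorb the linear term into the convergent lattice sum.
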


\begin{proof}
 Choose $r_0>0$ such that $|z-\nu|\le 1$ if $z\in Q_{r_0}(\nu)$. So
\begin{equation}\label{eq:4}
|\nu|\le 1+|z|,\quad z\in Q_r(\nu),\,r\in (0,r_0).
\end{equation}
 Given
$\b>0$, choose $\a\in (0,\b)$ and $R=R(\beta)$ such that
$$\a|z|^2+2\a|z|\le \b|z|^2,\quad \text{if $|z|\ge R$}.$$
By \eqref{eq:discoscomparables}
it is enough to prove that $$ \int_{\C}
e^{-\a|z|^2-2\a|z|}\om(z)\,dA(z)\le
C(\a,r,\om)\om\left(Q_r(0)\right)<\infty. $$ Now, bearing in mind
\eqref{eq:4} and Lemma~\ref{le:isra34} we deduce that
\begin{equation*}\begin{split}
\int_{\C} e^{-\a|z|^2-2\a|z|}\om(z)\,dA(z) & = \sum_{\nu\in
r\mathbb{Z}^2}\int_{Q_r(\nu)} e^{-\a|z|^2-2\a|z|}\om(z)\,dA(z)
\\ & \le e^{\a}  \sum_{\nu\in
r\mathbb{Z}^2} e^{-\a|\nu|^2}\int_{Q_r(\nu)} \om(z)\,dA(z)
\\ & \le  C(\a,r,\om)\om\left(Q_r(0) \right)\sum_{\nu\in
r\mathbb{Z}^2} e^{-\a|\nu|^2}M^{|\nu|}
\\ & \le  C(\a,r,\om)\om\left(Q_r(0)\right)<\infty.
\end{split}\end{equation*}
 This finishes the proof.
\end{proof}

\begin{lemma}\label{le:3}
If $\om\in \Ainfty$, then for any $\b,t,r>0$ and $\gamma\in\mathbb{R}$
$$\int_{\C} e^{-\beta|u|^2}\om\left(D(u,t) \right)^\gamma \,dA(u)
\le C(\g,\b,r,\om,t)\om\left(Q_{r}(0)\right)^\gamma <\infty.$$
\end{lemma}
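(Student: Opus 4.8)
The plan is to tile the plane by the lattice squares $Q_r(\nu)$, $\nu\in r\mathbb{Z}^2$, and to reduce the smoothed weight $\om(D(u,t))^\gamma$ on each square to the single number $\om(Q_r(0))^\gamma$. First I would write
$$\int_{\C}e^{-\beta|u|^2}\om(D(u,t))^\gamma\,dA(u)=\sum_{\nu\in r\mathbb{Z}^2}\int_{Q_r(\nu)}e^{-\beta|u|^2}\om(D(u,t))^\gamma\,dA(u),$$
and then handle two ingredients separately. The first is a \emph{smoothing comparison}: by Remark~\ref{rem:discoscomparables} (the fact that squares of a fixed side can be exchanged for discs of a fixed radius, together with \eqref{eq:discoscomparables}), and since $\nu\in D(u,r/\sqrt2)$ for $u\in Q_r(\nu)$, there is a constant $C_1=C_1(t,r,\om)$ with $C_1^{-1}\om(Q_r(\nu))\le\om(D(u,t))\le C_1\om(Q_r(\nu))$ for every $u\in Q_r(\nu)$. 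Because this bound is two-sided, raising to the (possibly negative) power $\gamma$ costs only $C_1^{|\gamma|}$, giving $\om(D(u,t))^\gamma\le C_1^{|\gamma|}\om(Q_r(\nu))^\gamma$ uniformly in the sign of $\gamma$.

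The second ingredient is the \emph{lattice growth bound}. Since $\om\in\Ainfty$, fix $p$ with $\om\in\Ap$ and let $M=M(\om,r,p)>1$ be the constant of Lemma~\ref{le:isra34}. Applying that lemma with $\nu'=0$ in both orders yields $M^{-|\nu|}\om(Q_r(0))\le\om(Q_r(\nu))\le M^{|\nu|}\om(Q_r(0))$, whence, distinguishing the sign of $\gamma$, in all cases
$$\om(Q_r(\nu))^\gamma\le M^{|\gamma|\,|\nu|}\,\om(Q_r(0))^\gamma.$$
Combining the two ingredients reduces the problem to the convergence of a Gaussian sum: for $u\in Q_r(\nu)$ one has $|u|\ge|\nu|-r/\sqrt2$, hence $e^{-\beta|u|^2}\le e^{\sqrt2\beta r|\nu|}e^{-\beta|\nu|^2}$ and $\int_{Q_r(\nu)}e^{-\beta|u|^2}\,dA\le r^2 e^{\sqrt2\beta r|\nu|}e^{-\beta|\nu|^2}$, so that
$$\int_{\C}e^{-\beta|u|^2}\om(D(u,t))^\gamma\,dA(u)\le C_1^{|\gamma|}r^2\,\om(Q_r(0))^\gamma\sum_{\nu\in r\mathbb{Z}^2}e^{(|\gamma|\log M+\sqrt2\beta r)|\nu|-\beta|\nu|^2}.$$
The series converges because the quadratic term $-\beta|\nu|^2$ dominates the linear exponent, and its sum is the desired constant $C(\gamma,\beta,r,\om,t)$; the finitely many lattice points with $|\nu|<r/\sqrt2$ (where the reverse triangle bound is vacuous) are absorbed harmlessly, using $e^{-\beta|u|^2}\le1$ there.

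The routine core is the Gaussian summation, which is essentially the mechanism already used in Lemma~\ref{le:2}. The only genuine point requiring care is the presence of an \emph{arbitrary real} exponent $\gamma$ of either sign: this is exactly what forces the use of the full two-sided estimates in both the smoothing comparison and Lemma~\ref{le:isra34}, so that the factors $C_1$ and $M$ enter with $|\gamma|$ in the exponent and the sign of $\gamma$ never reverses a needed inequality. Once those two-sided bounds are in place, the estimate follows uniformly.
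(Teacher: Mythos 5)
Your argument is correct, and its skeleton --- tiling $\C$ by the lattice squares $Q_r(\nu)$, controlling $\om(Q_r(\nu))^\gamma$ by $M^{|\gamma||\nu|}\om(Q_r(0))^\gamma$ via Lemma~\ref{le:isra34}, and closing with a Gaussian summation --- is exactly the paper's. The one genuine divergence is how the sign of $\gamma$ is handled. The paper uses only the one-sided inclusion $D(u,t)\subset Q_N(\nu)$ for $u\in Q_r(\nu)$, which gives $\om(D(u,t))^\gamma\le\om(Q_N(\nu))^\gamma$ only when $\gamma\ge0$; for $\gamma<0$ it then passes to the conjugate weight $\om'=\om^{-p'/p}\in\Ainfty$ and uses the $\Ap$ relation $\om(D(u,t))^\gamma\asymp\om'(D(u,t))^{-p\gamma/p'}$ (a positive exponent) to reduce to the case already proved. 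You instead invoke the \emph{two-sided} comparison $\om(D(u,t))\asymp\om(Q_r(\nu))$ for $u\in Q_r(\nu)$, available from Remark~\ref{rem:discoscomparables} and \eqref{eq:discoscomparables} (themselves consequences of Lemma~\ref{le:isra34}, so there is no circularity), which makes the power $\gamma$ harmless regardless of sign and renders the duality step unnecessary. Your route is slightly more streamlined and treats both signs uniformly, at the cost of leaning on the full strength of the Remark; the paper's route uses only the trivial monotonicity of $E\mapsto\om(E)$ in the geometric step but pays for it with the extra conjugate-weight argument. Both yield the stated constant $C(\gamma,\beta,r,\om,t)$, and your bookkeeping of the exceptional lattice points near the origin and of the linear-versus-quadratic exponents in the sum is accurate.
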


\begin{proof}
Fixed $\beta>0$, choose $r_0$ and $\alpha$ as in the proof of Lemma~\ref{le:2}.
 It is enough to prove that $$ \int_{\C}
e^{-\a |u|^2-2\a |u|}\om\left(D(u,t) \right)^\gamma\,dA(u)\le
C(\a,r,\om,\g,t)\om\left(Q_{r}(0)\right)^\gamma<\infty.$$ Next, take
$N=N(t,r)\in \N$ such that
\begin{equation}\label{eq:inclusion}
D(u,t)\subset Q_{N}(\nu),\quad u\in Q_r(\nu),\quad  \nu\in
r\mathbb{Z}^2.
\end{equation}
Now, bearing in mind \eqref{eq:4}, \eqref{eq:inclusion},
Lemma~\ref{le:isra34} and
\eqref{eq:discoscomparables}, we deduce that
\begin{equation*}\begin{split}
\int_{\C} e^{-\a|u|^2-2\a|u|}&\om\left(D(u,t) \right)^\g\,dA(u)
\\ & \le e^{\a}  \sum_{\nu\in
r\mathbb{Z}^2} e^{-\a|\nu|^2}\int_{Q_r(\nu)} \om\left(D(u,t)
\right)^\g \,dA(u)
\\ & \le r^2e^{\a}  \sum_{\nu\in
r\mathbb{Z}^2} e^{-\a|\nu|^2} \om\left(Q_{N}(\nu)\right)^\g
\\ & \le  C(\a,r)\sum_{\nu\in
r\mathbb{Z}^2} e^{-\a|\nu|^2}M^{\g|\nu|}\om\left(Q_{N}(0)\right)^\g
\\ & \le  C(\a,r,\om,t)\om\left(Q_{r}(0)\right)^\g\sum_{\nu\in
r\mathbb{Z}^2} e^{-\a|\nu|^2}M^{\g|\nu|}
\\ & \le  C(\a,r,\om,\g,t)\om\left(Q_{r}(0)\right)^\g
<\infty.
\end{split}\end{equation*}
Next assume that $\g<0$. Take $p\in (1,\infty)$ such that
$\om\in\Ap$.
Then,
$\om'\in \Ainfty$ and
$$\om\left(D(u,t) \right)^\g\asymp \om'\left(D(u,t)
\right)^{-p\g/p'}, \quad u\in\C,$$ so the result follows by
applying the above argument to $\om'$. This finishes the proof.
\end{proof}

\subsection{Proof of Proposition~\ref{pr:p=1}.}
\medskip
\par \eqref{item:p=13} $\Rightarrow$ \eqref{item:p=12} . Take $Q$ with $l(Q)=r>0$. Then for any
$z,\z\in Q$, $|z-\z|^2\le 2r^2$, so
\begin{equation*}\begin{split}
e^{-2\a r^2}\om(Q)\le C(\a) \widetilde{\om}^{(\a)}(z)\le
C(\a,\om)\om(z),\quad \text{a.e. $z\in Q$,}
\end{split}\end{equation*}
which implies \eqref{item:p=12} .
\par \eqref{item:p=11} $\Rightarrow$ \eqref{item:p=13}. Fixed $\alpha>0$,   $r>0$ such that
$\om\in A_{1,r}$ and $z\in\C$. Then, by Lemma~\ref{le:translation}
and Lemma~\ref{le:2}
\begin{equation*}\begin{split}
\widetilde{\om}^{(\a)}(z) &=
\int_{\C}e^{-\alpha|\z|^2}\om(\z+z)\,dA(\z)  \le
C(\a,r,\om)\om_{[z]}(Q_r(0))
\\ & = C(\a,r,\om)\om(Q_r(z))
\\ & \le  C(\a,r,\om)\text{infess}_{u\in
Q_r(z)}\om(u)\le \om(z),\quad \text{a.e. $z\in\C$.}
\end{split}\end{equation*}
Therefore, we already have proved
\eqref{item:p=11}  $\Leftrightarrow$ \eqref{item:p=12}  $\Leftrightarrow$ \eqref{item:p=13} .
\par Now, let us observe that $\left( \mathcal{L}^1_{\beta}(\om)\right)^\star\simeq
L^\infty$ via the pairing
$$\langle f,g \rangle_{\mathcal{L}^2_{\beta/2}(\om)}=\int_\C
f(z)\overline{g(z)}e^{-\frac{\b}{2}|z|^2}\om(z)\,dA(z).$$ So,
bearing in mind that the adjoint of $P_\b$ (via the
$\mathcal{L}^2_{\b/2}(\om)$-pairing) is
$$P^\star_\b(f)(z)=\frac{\b}{\pi\om(z)}\int_{\C}f(\z)e^{-\frac{\b}{2}|\z-z|^2+i\b\text{Im}(z\bar{\z})}\om(\z)\,dA(\z),$$
we deduce that \eqref{item:p=14}  holds if and only if
\begin{equation*}
\text{supesss}_{z\in\C}\left|\frac{1}{\om(z)}\int_{\C}f(\z)e^{-\frac{\b}{2}|\z-z|^2+i\b\text{Im}(z\bar{\z})}\om(\z)\,dA(\z)\right|
\le C \| f\|_{L^\infty}.
\end{equation*}
Therefore, it is clear that \eqref{item:p=13} $\Rightarrow$\eqref{item:p=14}. In order to see
the reverse implication, for each $z\in \C$ choose
$f_z(\z)=e^{-i\b\text{Im}(z\bar{\z})}$. The equivalence
\eqref{item:p=13}  $\Leftrightarrow$\eqref{item:p=15}  can be proved in the same way. This finishes the
proof.

\section{A Littlewood-Paley formula for
	$\Fpaom$.}\label{sec:LP}

\subsection{Preliminary results}\quad\par
In this section we will prove a collection of estimates which will be
essential  to prove Theorem~\ref{th:LPformula}.

\begin{lemma}\label{le:1}
	Let $0<p,t<\infty$, $\alpha\ge 0$ and  $\om\in \Ainfty$. Then, there
	exists a constant $C=C(\a,p,\om,t)$ such that
	$$|f(z)|^pe^{-\frac{p\a|z|^2}{2}}\le
	\frac{C}{\om\left(D(z,t)
		\right)}\int_{D(z,t)}|f(u)|^pe^{-\frac{p\a|u|^2}{2}}\om(u)\,dA(u),$$
	for any $z\in\C$ and $f\in H(\C)$.
\end{lemma}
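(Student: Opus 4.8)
The statement is a weighted sub-mean-value property for $|f|^p e^{-p\alpha|z|^2/2}$ averaged against $\omega\,dA$ over a disc $D(z,t)$. The plan is to reduce to the standard (unweighted) sub-mean-value estimate and then absorb the weight using the $\Ainfty$-structure, in particular the comparability of $\omega$ on nearby discs encoded in Remark~\ref{rem:discoscomparables}. First I would recall the classical pointwise estimate for Fock spaces: for $g\in H(\C)$ and any fixed $t>0$ one has $|g(z)|^p e^{-p\alpha|z|^2/2}\lesssim \int_{D(z,t)}|g(u)|^p e^{-p\alpha|u|^2/2}\,dA(u)$, with constant depending only on $p,\alpha,t$. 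This is the $k=0$, unweighted case and follows from subharmonicity of $|g|^p$ together with the fact that $e^{-p\alpha|u|^2/2}\asymp e^{-p\alpha|z|^2/2}$ for $u\in D(z,t)$ (the exponential varies by a bounded factor on a disc of fixed radius, after handling the cross term $|z-u|^2$ as in the proof of Lemma~\ref{le:2}).

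The main point, and the step where the weight $\omega\in\Ainfty$ enters, is to pass from the unweighted area integral to the weighted one. Here I would invoke that $\omega\in\Aq$ for some $q\ge 1$, so by Proposition~\ref{prop:aprest}\eqref{item:aprest3} (the mean inequality \eqref{eq:meanapdesc}), applied on a square $Q$ of fixed side length comparable to $t$ with $D(z,t)\subset Q$, one gets
\begin{equation*}
\left(\frac{1}{|Q|}\int_{Q}h\,dA\right)^{q}\le \frac{C_r}{\omega(Q)}\int_{Q}h^{q}\omega\,dA
\end{equation*}
for nonnegative $h$. Choosing $h=|f|^{p/q}e^{-p\alpha|\cdot|^2/(2q)}$ converts the plain $L^1(dA)$ average of $|f|^p e^{-p\alpha|\cdot|^2/2}$ into its $\omega$-weighted average, at the cost of the factor $\omega(Q)$ in the denominator. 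Combining with the unweighted estimate then yields
\begin{equation*}
|f(z)|^{p}e^{-p\alpha|z|^2/2}\lesssim \frac{1}{\omega(Q)}\int_{Q}|f(u)|^{p}e^{-p\alpha|u|^2/2}\omega(u)\,dA(u).
\end{equation*}

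To finish I would replace the square $Q$ by the disc $D(z,t)$ in both the denominator and the domain of integration. For the denominator, $\omega(Q)\asymp \omega(D(z,t))$ because squares of fixed size and discs of fixed radius centered at the same point are comparable in $\omega$-measure by Remark~\ref{rem:discoscomparables} (using \eqref{eq:discoscomparables}). For the domain, enlarging $D(z,t)$ to a slightly larger disc or square only increases the right-hand integral, and by the same comparability the enlargement costs a bounded factor; alternatively one runs the argument directly on a disc, which is legitimate since the $\Ap$-condition and \eqref{eq:meanapdesc} hold with discs in place of squares by Remark~\ref{rem:discoscomparables}. I expect the main obstacle to be purely bookkeeping: choosing the auxiliary square/disc radii consistently so that the inclusions $D(z,t)\subset Q\subset D(z,Nt)$ hold with $N$ depending only on $t$, and keeping track that every comparison constant depends only on $\alpha,p,\omega,t$ and not on $z$. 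The translation invariance from Lemma~\ref{le:translation} makes the $z$-uniformity transparent, since one may center everything at the origin.
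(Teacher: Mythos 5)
Your proof is correct and follows essentially the same route as the paper: the unweighted pointwise sub-mean-value estimate applied at the reduced exponent $p/q$ (the paper takes $p_0>1$ with $\om\in\Apo$ and uses $p/p_0$), followed by H\"older's inequality in the guise of the mean inequality \eqref{eq:meanapdesc} to insert the weight, and the comparability of $\om$-measures of nearby squares and discs from Remark~\ref{rem:discoscomparables}. Two caveats. First, your justification of the unweighted estimate is incorrect as stated: $e^{-p\a|u|^2/2}$ and $e^{-p\a|z|^2/2}$ are \emph{not} uniformly comparable for $u\in D(z,t)$, since their ratio is of order $e^{\pm Cp\a t|z|}$ and hence unbounded in $z$. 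The correct argument, which is exactly the result the paper cites (\cite[Lemma~1]{OrtegaSeipJAnalMath98}), applies subharmonicity to $|f(u)e^{-\a\overline{z}u}|^{p/q}$ and uses the identity $2\mathcal{R}(\overline{z}u)=|z|^2+|u|^2-|z-u|^2$ to absorb the cross term; since you invoke the estimate as a known fact this is not a gap in the lemma's proof, but the parenthetical reason you give for it would not survive scrutiny. Second, with your choice $D(z,t)\subset Q$ the chain terminates in $\frac{1}{\om(Q)}\int_Q(\cdot)\,\om\,dA$, and a nonnegative integral over $Q$ cannot be shrunk back to one over $D(z,t)$; you must instead arrange $Q\subset D(z,t)$ (start the sub-mean-value step on a smaller disc $D(z,s)\subset Q$) and only then enlarge the final domain, or, as your alternative correctly proposes, run the $\Aq$-estimate directly on the disc $D(z,t)$ --- which is precisely what the paper does via H\"older and the $A_{p_0}$ condition on $D(z,t)$.
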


\begin{proof}
	Let be $p_0\in (1,\infty)$ such that $\om\in\Apo$. By subharmonacity for $\a=0$ and by
	\cite[Lemma~1]{OrtegaSeipJAnalMath98} (see also
	\cite[Lemma~7]{ConsPelJGA2016}) for $\alpha>0$, there is a constant $C>0$ such that
	$$|f(z)|^{\frac{p}{p_0}} e^{-\frac{p}{p_0}\frac{\a|z|^2}{2}}\le C \int_{D(z,t)}|f(u)|^{\frac{p}{p_0}}
	e^{-\frac{p}{p_0}\frac{\a|u|^2}{2}}\,dA(u),$$ for any $z\in\C$ and
	$f\in H(\C)$.
	So,
	\begin{equation*}\begin{split}
	&|f(z)|^{\frac{p}{p_0}} e^{-\frac{p}{p_0}\frac{\a|z|^2}{2}}\\
	&\le C
	\left(\int_{D(z,t)}|f(u)|^pe^{-\frac{p\a|u|^2}{2}}\om(u)\,dA(u)
	\right)^{1/p_0} \left(
	\int_{D(z,t)}\om^{-\frac{p'_0}{p_0}}(u)\,dA(u)\right)^{1/p_0'}
	\\ & \le \frac{C}{\left(\om\left(D(z,t) \right)\right)^{1/p_0}}\left(\int_{D(z,t)}|f(u)|^pe^{-\frac{p\a|u|^2}{2}}\om(u)\,dA(u)
	\right)^{1/p_0}.
	\end{split}\end{equation*}
	This finishes the proof.
\end{proof}
The next result follows from Lemma~\ref{le:1}.

\begin{corollary}\label{co:1}
	Let $0<p,\a,t<\infty$ and let be $\om\in \Ainfty$. Then, there
	exists a constant $C=C(\a,p,\om,t)$ such that
	$$|f(z)|\le
	\frac{Ce^{\frac{\a|z|^2}{2}}}{\om\left(D(z,t)
		\right)^{1/p}}\|f\|_{\Fpaom}$$ for any $z\in\C$ and $f\in H(\C)$.
\end{corollary}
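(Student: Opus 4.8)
The plan is to derive Corollary~\ref{co:1} directly from Lemma~\ref{le:1} by a pointwise application of the integral estimate followed by a crude bound on the weighted integral against the full norm. The statement to be proved is the growth estimate
$$|f(z)|\le \frac{Ce^{\frac{\a|z|^2}{2}}}{\om\left(D(z,t)\right)^{1/p}}\|f\|_{\Fpaom},\qquad z\in\C,\ f\in H(\C),$$
and since Lemma~\ref{le:1} already provides control of $|f(z)|^p e^{-\frac{p\a|z|^2}{2}}$ by a local weighted average, almost all the work is done. The only substantive point is to compare the local integral $\int_{D(z,t)}|f(u)|^p e^{-\frac{p\a|u|^2}{2}}\om(u)\,dA(u)$ with the global norm $\|f\|_{\Fpaom}^p$.

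\emph{First} I would apply Lemma~\ref{le:1} with the given $p$, $\a$ and $t$ (all positive, so the hypothesis $\alpha\ge 0$ is met and $\om\in\Ainfty$ is in force) to obtain, for each fixed $z\in\C$,
$$|f(z)|^p e^{-\frac{p\a|z|^2}{2}}\le \frac{C}{\om\left(D(z,t)\right)}\int_{D(z,t)}|f(u)|^p e^{-\frac{p\a|u|^2}{2}}\om(u)\,dA(u).$$
\emph{Next}, since $D(z,t)\subset\C$ and the integrand is nonnegative, the local integral is bounded by the integral over all of $\C$, which by the definition of $\|\cdot\|_{\Fpaom}$ is exactly $\|f\|_{\Fpaom}^p$. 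Substituting this bound gives
$$|f(z)|^p e^{-\frac{p\a|z|^2}{2}}\le \frac{C}{\om\left(D(z,t)\right)}\|f\|_{\Fpaom}^p.$$
Multiplying by $e^{\frac{p\a|z|^2}{2}}$ and taking $p$-th roots then yields the claimed inequality, with the constant $C=C(\a,p,\om,t)$ inherited from Lemma~\ref{le:1} (its $p$-th root being absorbed into the same symbol).

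\emph{I do not expect any real obstacle here}, as the corollary is essentially a restatement of Lemma~\ref{le:1} after enlarging the domain of integration to $\C$ and rearranging. The one place that warrants a moment's care is ensuring that $\om\left(D(z,t)\right)>0$ so that the division is legitimate; this is guaranteed because $\om\in\Ainfty$ is a weight, hence positive a.e. with $\om\left(D(z,t)\right)=\int_{D(z,t)}\om\,dA>0$ for every $z$ and every $t>0$. No other estimate or auxiliary lemma is required, and the entire argument fits in a few lines.
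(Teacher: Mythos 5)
Your proof is correct and is exactly the intended derivation: the paper offers no written proof, stating only that the corollary ``follows from Lemma~\ref{le:1}'', and the natural route is precisely yours --- apply the lemma, enlarge the domain of integration to all of $\C$ to recover $\|f\|_{\Fpaom}^p$, and take $p$-th roots. Nothing further is needed.
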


Let us recall that $\mathcal{F}^\infty_{\alpha}$, $\a>0$, is the space of the the entire functions $f$ such that
$$\|f\|_{\mathcal{F}^\infty_{\alpha}}=\sup_{z\in\C}|f(z)|e^{-\frac{\a}{2}|z|^2}<\infty.$$
\begin{proposition}\label{pr:embedding}
	Let $\a>0$ and $\om\in \Ainfty$.
	Then:
	\begin{enumerate}
		\item \label{item:embedding1}
		If $0<p<q\le\infty$,
		\begin{equation*}
		\mathcal{F}^p_{\alpha}\subset\mathcal{F}^q_{\alpha}.
		\end{equation*}
		\item \label{item:embedding2} For any $\delta\in(0,\a)$,
		$$
		\mathcal{F}^\infty_{\alpha-\delta}\subset
		\mathcal{F}^p_{\a,\om}\subset \mathcal{F}^1_{\alpha+\delta}.
		$$
	\end{enumerate}
\end{proposition}
\begin{proof}
	 \eqref{item:embedding1} is well known (see for instance \cite[Theorem~2.10]{ZhuFock}).
	Let us prove (ii).
	Let
	$f\in \mathcal{F}^\infty_{\alpha-\delta}$.  By Lemma~\ref{le:2}
	$$\| f\|^p_{\mathcal{F}^p_{\a,\om}}\le  \|
	f\|^p_{\mathcal{F}^\infty_{\alpha-\delta}}\int_{\C}
	e^{-p\delta\frac{|z|^2}{2}}\om(z)\,dA(z)<\infty,
	$$
	which proves the first embedding.
	\par Now assume that $f\in\Fpaom$. Then, by Corollary~\ref{co:1} and
	Lemma~\ref{le:3}
	$$\| f\|_{\mathcal{F}^1_{\alpha+\delta}}\lesssim
	\|f\|_{\Fpaom}\int_{\C} e^{-\delta\frac{|z|^2}{2}}\om\left(D(z,1)
	\right)^{-1/p}\,dA(z)<\infty.
	$$
	This finishes the proof.
\end{proof}

The next result is a technical byproduct of Proposition \ref{pr:embedding}\eqref{item:embedding1} which
 will be useful to obtain appropriate estimates for  $0<p\le 1$, in terms of the ones for $p>1$.

\begin{corollary}\label{cor:subhP}
Let $g\in \mathcal{F}^{1}_{\beta}$, $0<\beta<2\alpha$. Then
for  $\theta\in (0,1]$ we have
	$
[P_\alpha^+(|g|)(z)]^\theta\lesssim P_{\theta \alpha}^+(|g|^\theta)(z).
$
\end{corollary}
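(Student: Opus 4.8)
The plan is to peel off matching Gaussian prefactors from both sides so that the claim collapses to a clean comparison between the $L^1$‑ and $L^\theta$‑norms of one auxiliary function, to which the sub‑mean value estimate of Lemma~\ref{le:1} applies with a constant \emph{uniform in $z$}.

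First I would complete the square in the kernel. Since $|e^{\a\bar\z z}|e^{-\a|\z|^2}=e^{\a\mathcal{R}(\bar\z z)-\a|\z|^2}=e^{\frac\a4|z|^2}e^{-\a|\z-z/2|^2}$, setting $\Phi_z(\z):=|g(\z)|e^{-\a|\z-z/2|^2}$ gives $P^+_\a(|g|)(z)=\frac\a\pi e^{\frac\a4|z|^2}\int_\C\Phi_z\,dA$, and, because $|g(\z)|^\theta e^{-\theta\a|\z-z/2|^2}=\Phi_z(\z)^\theta$, likewise $P^+_{\theta\a}(|g|^\theta)(z)=\frac{\theta\a}\pi e^{\frac{\theta\a}4|z|^2}\int_\C\Phi_z^\theta\,dA$. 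Raising the first identity to the power $\theta$, the factors $e^{\frac{\theta\a}4|z|^2}$ cancel on the two sides, so the corollary is \emph{equivalent} to the bound $\left(\int_\C\Phi_z\,dA\right)^\theta\lesssim\int_\C\Phi_z^\theta\,dA$, uniformly in $z$. Both integrals are finite: by Corollary~\ref{co:1} with $\om\equiv 1$ the hypothesis $g\in\mathcal{F}^1_\beta$ gives $|g(\z)|\lesssim e^{\frac\beta2|\z|^2}$, and since $\beta<2\a$ the exponent $\frac{\theta\beta}2-\theta\a$ is negative, so $\Phi_z^\theta$ and $\Phi_z$ are integrable.

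The heart of the matter is that $\Phi_z^\theta$ satisfies a sub‑mean value inequality with a constant independent of $z$. I would apply Lemma~\ref{le:1} to the entire function $g(\cdot+z/2)$, with weight $\om\equiv1\in\Ainfty$, exponent $p=\theta$, and Fock parameter $2\a$; after the substitution $\z=w+z/2$ this reads, for every $\z\in\C$ and every fixed radius $s>0$, $\Phi_z(\z)^\theta\le\frac{C}{s^2}\int_{D(\z,s)}\Phi_z^\theta\,dA$, where $C=C(\a,\theta,s)$ depends neither on $z$ nor on $g$ (the constant in Lemma~\ref{le:1} is independent of the function to which it is applied). I would then conclude by discretization. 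Tile $\C=\bigsqcup_j Q_j$ by congruent squares with centers $\nu_j$, and choose discs $\hat D_j=D(\nu_j,s')$ containing $\bigcup_{w\in Q_j}D(w,s)$ and having bounded overlap $N$. The uniform sub‑mean value yields $\sup_{Q_j}\Phi_z\le C'\big(\int_{\hat D_j}\Phi_z^\theta\,dA\big)^{1/\theta}$, hence $\int_{Q_j}\Phi_z\,dA\le C''\big(\int_{\hat D_j}\Phi_z^\theta\,dA\big)^{1/\theta}$. Using the elementary inequality $\big(\sum_j a_j\big)^\theta\le\sum_j a_j^\theta$ for $\theta\in(0,1]$ and then the bounded overlap, one gets $\big(\int_\C\Phi_z\,dA\big)^\theta\le\sum_j\big(\int_{Q_j}\Phi_z\,dA\big)^\theta\le C''^{\,\theta}\sum_j\int_{\hat D_j}\Phi_z^\theta\,dA\le N\,C''^{\,\theta}\int_\C\Phi_z^\theta\,dA$, which is exactly the reduced estimate.

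The main obstacle is precisely the step that the reduction is designed to sidestep. The tempting crude route — writing $|g|=|g|^\theta|g|^{1-\theta}$, bounding $|g|^{1-\theta}\lesssim e^{\frac{\beta(1-\theta)}2|\z|^2}$, and pulling the resulting Gaussian out of the integral by its supremum — overshoots by an unbounded factor $e^{c|z|^2}$ and does not yield the corollary (one checks that the lower bound it would require on $P^+_{\theta\a}(|g|^\theta)$ already fails for $g\equiv1$). Equally, a direct discretization keeping the projection kernel as a separate multiplicative weight fails, because $e^{-\a|\z-z/2|^2}$ is \emph{not} comparable to a constant on unit discs far from $z/2$, producing spurious $e^{O(|z|)}$ losses. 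Folding the \emph{whole} kernel $e^{\a\mathcal{R}(\bar\z z)-\a|\z|^2}$ into the single shifted Gaussian $e^{\frac\a4|z|^2}e^{-\a|\z-z/2|^2}$ is what makes $\Phi_z^\theta$ a genuine Fock‑type modulus, so that Lemma~\ref{le:1} supplies a $z$‑independent sub‑mean value and the subadditivity argument closes cleanly.
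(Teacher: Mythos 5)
Your proof is correct and follows essentially the same route as the paper: after stripping the common Gaussian factor $e^{\theta\alpha|z|^2/4}$, both arguments identify $[P^+_\alpha(|g|)(z)]^\theta$ and $P^+_{\theta\alpha}(|g|^\theta)(z)$ with the $\mathcal{F}^1_{2\alpha}$- and $\mathcal{F}^\theta_{2\alpha}$-quasinorms of the entire function $w\mapsto g(w)e^{\alpha w\overline{z}}$ and then use the inclusion $\mathcal{F}^\theta_{2\alpha}\subset\mathcal{F}^1_{2\alpha}$ with a constant independent of $z$. The only difference is that the paper simply quotes this embedding (Proposition~\ref{pr:embedding}\eqref{item:embedding1}, i.e.\ \cite[Theorem~2.10]{ZhuFock}), whereas you re-prove it from scratch via the sub-mean-value estimate of Lemma~\ref{le:1} (with $\om\equiv 1$) and a bounded-overlap discretization.
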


\begin{proof}

	For $z\in\C$,
	\begin{align*}
	[P_\alpha^+(|g|)(z)]^\theta
	&\asymp \left[\int_\C \left|g(w)e^{\alpha w\overline{z}}\right|e^{-\alpha|w|^2}\,dA(w)
	\right]^\theta\\
	&\asymp\left\|g(\cdot)e^{\alpha (\cdot)\overline{z}}
	\right\|^\theta_{\mathcal{F}^1_{2\alpha}}
	\lesssim \left\|g(\cdot)e^{\alpha (\cdot)\overline{z}}
	\right\|^\theta_{\mathcal{F}^\theta_{2\alpha}}\\	
		& \asymp\int_\C |g(w)|^\theta |e^{\theta \alpha w\overline{z}}|e^{-\theta\alpha|w|^2}\,dA(w)
		\asymp P_{\theta \alpha}^+(|g|^\theta)(z).
	\end{align*}
	Note that the condition $\beta<2\alpha$ ensures that all the terms in the above inequalities are finite.
\end{proof}

\begin{lemma}\label{le:gooddefiF1}	
	Assume that $0<\b<2\a$  and $g\in\mathcal{F}^1_\b$. Then,  for each $z\in\C$:
	\begin{enumerate}
		\item \label{item:gooddefiF11} If $\gamma(\a,\b):=\frac{\a^2}{2\a-\b}$, then $P^+_\a(|g|)(z)\le e^{\frac{\gamma(\a,\b)}{2}|z|^2}\|g\|_{\mathcal{F}^1_\b}$;
		\item \label{item:gooddefiF12} For a non-negative integer $k$, $g^{(k)}(z)=\a^kP_\a(\overline{\zeta}^k g(\zeta))(z)$;
		\item \label{item:gooddefiF13} If $\gamma>\gamma(\a,\b)$, then
		$g^{(k)}\in \mathcal{F}^{1}_\gamma$.
	\end{enumerate}
\end{lemma}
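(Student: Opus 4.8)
The plan is to handle the three items in order, using \eqref{item:gooddefiF11} to supply the absolute integrability that legitimizes the operations in \eqref{item:gooddefiF12} and \eqref{item:gooddefiF13}. For \eqref{item:gooddefiF11} I would start from the identity $|e^{\a\overline{\z}z}|=e^{\a\mathcal{R}(\overline{\z}z)}$, so that
\[
P^+_\a(|g|)(z)=\frac{\a}{\pi}\int_\C |g(\z)|\,e^{\a\mathcal{R}(\overline{\z}z)-\a|\z|^2}\,dA(\z),
\]
and then compare the Gaussian factor with the weight $e^{-\frac{\b}{2}|\z|^2}$ of $\mathcal{F}^1_\b$. Setting $a=\a-\frac{\b}{2}>0$ (here the hypothesis $\b<2\a$ enters), the quantity to maximize over $\z$ is the concave quadratic $\a\mathcal{R}(\overline{\z}z)-a|\z|^2$, whose maximum, attained at $\z=\frac{\a}{2a}z$, equals $\frac{\a^2}{4a}|z|^2=\frac{\gamma(\a,\b)}{2}|z|^2$. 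This gives the pointwise kernel bound $e^{\a\mathcal{R}(\overline{\z}z)-\a|\z|^2}\le e^{\frac{\gamma(\a,\b)}{2}|z|^2}e^{-\frac{\b}{2}|\z|^2}$; pulling $e^{\frac{\gamma(\a,\b)}{2}|z|^2}$ out of the integral leaves exactly $\|g\|_{\mathcal{F}^1_\b}$, up to the normalizing constant.

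For \eqref{item:gooddefiF12} I would first record the reproducing identity $g=P_\a(g)$. Expanding $e^{\a\overline{\z}z}=\sum_k \frac{\a^k z^k\overline{\z}^k}{k!}$ and integrating the Taylor series of $g$ against the orthogonality relations $\frac{\a}{\pi}\int_\C \z^n\overline{\z}^m e^{-\a|\z|^2}\,dA(\z)=\delta_{nm}\frac{n!}{\a^n}$ collapses $P_\a(g)$ back to $g$, the interchange of sum and integral being justified by the absolute convergence from \eqref{item:gooddefiF11}. Differentiating this identity $k$ times in $z$ under the integral sign and using $\partial_z^k e^{\a\overline{\z}z}=\a^k\overline{\z}^k e^{\a\overline{\z}z}$ yields $g^{(k)}(z)=\a^k P_\a(\overline{\z}^k g)(z)$. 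The differentiation under the integral is admissible because, for $z$ in a compact set, the integrand and its $z$-derivatives are dominated by $|g(\z)|\,|\z|^k e^{\a R|\z|}e^{-\a|\z|^2}$, which is integrable since $\b<2\a$ lets the Gaussian absorb both the polynomial and the exponential-linear factors.

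For \eqref{item:gooddefiF13} I would combine \eqref{item:gooddefiF12} with a small gain in the exponent. From $|g^{(k)}(z)|\le \a^k P^+_\a(|\z|^k|g|)(z)$ and the completion-of-square estimate of \eqref{item:gooddefiF11}, but now splitting off $e^{-\frac{\eta}{2}|\z|^2}$ for some $\eta\in(\b,2\a)$, one obtains $P^+_\a(|\z|^k|g|)(z)\lesssim e^{\frac{\gamma(\a,\eta)}{2}|z|^2}\int_\C |\z|^k|g(\z)|e^{-\frac{\eta}{2}|\z|^2}\,dA(\z)$; the remaining integral is finite because $|\z|^k e^{-\frac{\eta-\b}{2}|\z|^2}$ is bounded, reducing it to a multiple of $\|g\|_{\mathcal{F}^1_\b}$. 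Since $\eta\mapsto\gamma(\a,\eta)=\frac{\a^2}{2\a-\eta}$ is continuous and increasing, for any prescribed $\gamma>\gamma(\a,\b)$ I can pick $\eta>\b$ with $\gamma(\a,\b)<\gamma(\a,\eta)<\gamma$, whence $\|g^{(k)}\|_{\mathcal{F}^1_\gamma}\lesssim \int_\C e^{\frac{\gamma(\a,\eta)-\gamma}{2}|z|^2}\,dA(z)<\infty$, i.e.\ $g^{(k)}\in\mathcal{F}^1_\gamma$. I expect the genuine obstacle to lie not in the quadratic optimizations, which are elementary, but in the rigorous justification of the two analytic manipulations of \eqref{item:gooddefiF12}—the term-by-term integration giving $g=P_\a(g)$ and the differentiation under the integral sign—both of which rest on the integrability furnished by \eqref{item:gooddefiF11} together with the strict inequality $\b<2\a$.
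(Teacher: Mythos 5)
Your items \eqref{item:gooddefiF11} and \eqref{item:gooddefiF13} follow the paper's argument essentially verbatim: the same completion of squares $\mathcal{R}(\a\overline{z}\z)-(\a-\tfrac{\b}{2})|\z|^2\le \tfrac{\a^2}{2(2\a-\b)}|z|^2$ for \eqref{item:gooddefiF11}, and for \eqref{item:gooddefiF13} the same device of absorbing $|\z|^k$ into a small Gaussian so that $\z^k g\in\mathcal{F}^1_{\eta}$ for every $\eta\in(\b,2\a)$ and \eqref{item:gooddefiF11} applies with $\eta$ in place of $\b$. Your treatment of the differentiation under the integral sign in \eqref{item:gooddefiF12} is also sound, since the dominating function factors as $\bigl(|g(\z)|e^{-\frac{\b}{2}|\z|^2}\bigr)\bigl(|\z|^k e^{\a R|\z|}e^{-(\a-\frac{\b}{2})|\z|^2}\bigr)$ with the second factor bounded. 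The one place where you genuinely depart from the paper is the reproducing identity $g=P_\a(g)$: the paper approximates $g$ by polynomials in the $\mathcal{F}^1_\b$-norm and passes to the limit using the bound of \eqref{item:gooddefiF11}, which avoids all interchange issues, whereas you expand both the kernel and $g$ into power series and integrate term by term.

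At that step your justification is insufficient as stated. Item \eqref{item:gooddefiF11} controls $\int_\C|g(\z)|e^{\a|z||\z|}e^{-\a|\z|^2}\,dA(\z)$, but Tonelli for the double-series interchange requires finiteness of the larger quantity $\int_\C\widetilde{g}(|\z|)\,e^{\a|z||\z|}e^{-\a|\z|^2}\,dA(\z)$, where $\widetilde{g}(r)=\sum_n|g^{(n)}(0)|r^n/n!$ is the absolute majorant of $g$; this does not follow from \eqref{item:gooddefiF11}, because $\widetilde{g}$ may greatly exceed $|g|$ (the naive Cauchy estimate only gives $\widetilde{g}(r)\lesssim e^{2\b r^2}$, which the Gaussian $e^{-\a|\z|^2}$ cannot absorb under the mere hypothesis $\b<2\a$). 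The interchange is in fact legitimate, but seeing this requires an extra ingredient you have not supplied, e.g.\ the coefficient characterization of order and type showing that $\widetilde{g}$ has the same order $2$ and type $\le\b/2<\a$ as $g$. Two cleaner repairs: integrate over circles $|\z|=r$ first, so that orthogonality in $\theta$ kills all off-diagonal terms before any interchange over all of $\C$ is attempted and the remaining radial interchange needs only $\sum_n|g^{(n)}(0)|\,|z|^n/n!<\infty$; or follow the paper and use density of polynomials in $\mathcal{F}^1_\b$ together with \eqref{item:gooddefiF11}. With that step patched, your proof is correct and otherwise coincides with the paper's.
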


\begin{proof}
	Since  $0<\b<2\a$, we have
	$$
	\mathcal{R}(\a\overline{z}\z)-(\a-\frac{\b}{2})|\z|^2\le \a|z||\z|-\frac{2\a-\b}{2}|\z|^2\le \frac{\a^2}{2(2\a-\b)}|z|^2.
	$$

	So,
	\begin{equation}\begin{split}\label{eq:e1}
	P^+_\a(|g|)(z)
 & = \int_{\C}|g(\z)|e^{-\frac{\b}{2}|\z|^2}
	e^{\mathcal{R}(\a\overline{z}\z)-(\a-\frac{\b}{2})|\z|^2}\,dA(\z)
	\\ &\le e^{\frac{\gamma(\a,\b)}{2}|z|^2}\|g\|_{\mathcal{F}^1_\b}.
	\end{split}\end{equation}
	Now, fix $z\in\C$ and take $\ep>0$  an a polynomial $\psi$ such that
	$$
	e^{\frac{\gamma(\a,\b)}{2}|z|^2} \|g-\psi\|_{\mathcal{F}^1_\b}<\frac{\ep}{2}\quad\text{and}\quad
	|g(z)-\psi(z)|<\frac{\ep}{2}.
	$$
	Then, by \eqref{eq:e1}
	\begin{equation*}\begin{split}
	\left| P_\a(g)(z)-g(z)\right|
	& \le \left| P_\a(g-\psi)(z)\right|+|\psi(z)
	-g(z)| \\ & \le P^+_\a(|g-\psi|)(z)+\frac{\ep}{2}
	\\ & \le e^{\frac{\gamma(\a,\b)}{2}|z|^2} \|g-\psi\|_{\mathcal{F}^1_\b}+\frac{\ep}{2}<\ep,
	\end{split}\end{equation*}
	which implies that $g(z)=P_\a(g)(z)$, for any $z\in\C$.
	From these results  it is easy to check  that
	$g^{(k)}(z)=\a^kP_\a(\overline{\zeta}^k g(\zeta))(z)$.
	
	Finally, observe that $|\zeta|^ke^{-\e |\zeta|^2}$ is a bounded function for any $\varepsilon>0$.
	Thus, $\zeta^k g(\zeta)\in \mathcal{F}^{1}_{\beta+\e}$
	for any $\varepsilon>0$. So, \eqref{item:gooddefiF13} is a consequence of \eqref{item:gooddefiF11} and \eqref{item:gooddefiF12}.
\end{proof}

The next lemma provides a representation formula of a holomorphic function in terms of its $k$-th derivatives.

\begin{lemma}\label{le:representation_formula}
	Let  $\beta\in(0,2\a)$, $k\in\N$ and $f\in \mathcal{F}^1_{\b}$. Let
	 $T_m(f)$ be the Taylor polynomial of degree $m$ for $f$, centered at $z = 0$.
	Then,
	\begin{equation*}
	f(z) = T_{2k-1}(f)(z) +R_{2k}(f)(z),
	\end{equation*}
	where
	$$
	R_{2k}(f)(z):= \frac{\a^{1-k}}{\pi}\int_{\C} \frac{(f(w)-T_{2k-1}(f)(w))^{(k)}}
	{\overline{w}^k}e^{\a z\overline{w}} e^{-\a|w|^2}\,dA(w).
	$$
	
	In particular, for $k=1$ we have
	\begin{equation*}
	f(z)=f(0)+f'(0)z+\frac1{\pi}\int_{\C}
	\frac{f'(w)-f'(0)}{\overline{w}}e^{\a z\overline{w}}
	e^{-\a|w|^2}dA(w).
	\end{equation*}
\end{lemma}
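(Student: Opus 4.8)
The plan is to set $g:=f-T_{2k-1}(f)$ and reduce the whole identity to a single relation about $g$. Since $T_{2k-1}(f)$ is a polynomial (hence lies in $\mathcal{F}^1_{\beta}$) and $f\in\mathcal{F}^1_{\beta}$, we have $g\in\mathcal{F}^1_{\beta}$; moreover, by the very definition of the Taylor polynomial, $g$ vanishes to order at least $2k$ at the origin, so $g^{(j)}$ vanishes to order at least $2k-j$ there and $g^{(k)}(w)/\overline{w}^{k}$ is bounded near $0$. Writing $K(w):=e^{\alpha z\overline{w}}e^{-\alpha|w|^2}$ and
$$I_j:=\int_{\C}\frac{g^{(j)}(w)}{\overline{w}^{j}}\,K(w)\,dA(w),\qquad 0\le j\le k,$$
the reproducing formula $g=P_\alpha g$ established in Lemma~\ref{le:gooddefiF1} (its item \eqref{item:gooddefiF12} with derivative index $0$) reads $g(z)=\frac{\alpha}{\pi}I_0$, while by definition $R_{2k}(f)(z)=\frac{\alpha^{1-k}}{\pi}I_k$. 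Thus the statement is equivalent to the single relation $I_k=\alpha^{k}I_0$.

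Before proving this I would record the convergence of each $I_j$. From the pointwise estimate $|g^{(j)}(w)|\lesssim(1+|w|)^{j}e^{\frac{\beta}{2}|w|^2}\|g\|_{\mathcal{F}^1_\beta}$, which follows from Corollary~\ref{co:1} applied with the trivial weight (note $1\in A^{restricted}_{1}$) together with Cauchy's inequality on discs of radius $(1+|w|)^{-1}$, the factor $\overline{w}^{-j}$ is harmless at infinity, while near the origin $|g^{(j)}(w)|/|w|^{j}\lesssim|w|^{2k-2j}$ stays bounded for $j\le k$. Since $\beta<2\alpha$, the Gaussian $e^{-\alpha|w|^2}$ dominates $e^{\frac{\beta}{2}|w|^2}e^{\alpha|z||w|}$, so all the integrals $I_0,\dots,I_k$ converge absolutely.

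The heart of the argument is a one-step recursion obtained by integration by parts in the variable $w$, based on the identity $\partial_w K=-\alpha\overline{w}K$ together with $\partial_w(\overline{w}^{-j})=0$ and $\partial_w g^{(j-1)}=g^{(j)}$. Expanding
$$\partial_w\!\left[g^{(j-1)}(w)\,\overline{w}^{-j}K(w)\right]=g^{(j)}(w)\,\overline{w}^{-j}K(w)-\alpha\,g^{(j-1)}(w)\,\overline{w}^{-(j-1)}K(w),$$
and integrating over $\C$ (so that the left-hand side integrates to $0$), one obtains $0=I_j-\alpha I_{j-1}$, that is $I_j=\alpha I_{j-1}$. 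Iterating from $j=k$ down to $j=1$ yields $I_k=\alpha^{k}I_0$, whence $R_{2k}(f)(z)=\frac{\alpha^{1-k}}{\pi}\alpha^{k}I_0=\frac{\alpha}{\pi}I_0=g(z)$, i.e. $f=T_{2k-1}(f)+R_{2k}(f)$; the stated $k=1$ formula is simply this identity for $k=1$.

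The step that needs genuine care --- and which I expect to be the main obstacle --- is the vanishing of the boundary terms in the integration by parts, since $\overline{w}^{-j}$ is singular at the origin. I would carry out the integration over the annulus $\{\epsilon<|w|<R\}$ and let $R\to\infty$, then $\epsilon\to0$. The outer contribution disappears thanks to the Gaussian decay of $K$. For the inner circle $|w|=\epsilon$, the integrand $g^{(j-1)}(w)\,\overline{w}^{-j}K(w)$ is $O(|w|^{2k-2j+1})$, because $g^{(j-1)}$ vanishes to order $2k-(j-1)$; since $2k-2j+1\ge1$ for every $1\le j\le k$, the line integral over $|w|=\epsilon$ is $O(\epsilon^{2k-2j+2})\to0$. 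Hence no residual term survives at the origin, the recursion $I_j=\alpha I_{j-1}$ closes, and the proof is complete. The bookkeeping of vanishing orders is precisely what forces the cancellation of the boundary terms, and it is the only delicate point in the argument.
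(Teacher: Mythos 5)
Your argument is correct, but it takes a genuinely different route from the paper's. The paper expands $(f-T_{2k-1}(f))^{(k)}$ in its Taylor series, observes that this series starts at the power $w^k$, and evaluates the integral defining $R_{2k}(f)$ term by term: each monomial contribution $\int_{\C}\frac{w^{k+m}w^k}{|w|^{2k}}e^{\alpha z\overline w}e^{-\alpha|w|^2}\,dA(w)$ is computed explicitly by orthogonality and a radial Gamma-function integral, and the resulting series resums to $f-T_{2k-1}(f)$. You instead reduce the whole statement to the single identity $I_k=\alpha^kI_0$, obtained from a Stokes-type integration by parts in $w$ (via $\partial_wK=-\alpha\overline wK$) combined with the reproducing formula $g=P_\alpha g$ of Lemma~\ref{le:gooddefiF1}. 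The hypothesis that $g$ vanishes to order $2k$ at the origin is decisive in both proofs but enters through different mechanisms: in the paper it makes the Taylor series of $g^{(k)}$ start at $w^k$ so that the orthogonality computation returns exactly the coefficients of $g$; in yours it forces the inner boundary terms on $|w|=\epsilon$ to vanish. What your route buys is the avoidance of any interchange of summation and integration (which the paper justifies only briefly), at the price of the limiting argument on the annulus $\{\epsilon<|w|<R\}$ --- which you do carry out, including the growth estimate on $g^{(j)}$ needed both for the absolute convergence of each $I_j$ and for the decay of the outer boundary term. Both arguments use $\beta<2\alpha$ in the same way, namely to ensure the Gaussian $e^{-\alpha|w|^2}$ dominates $e^{\frac{\beta}{2}|w|^2}e^{\alpha|z||w|}$.
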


\begin{proof}
	By Lemma \ref{le:gooddefiF1} the function $g_m=f-T_m(f)$ and its $k$-th derivatives are in $\mathcal{F}^1_{\beta+\delta}$ for any $\delta>0$. Since $$(f(w)-T_{2k-1}(f)(w))^{(k)}\frac
	{w^k}{|w|^{2k}}$$
	is bounded on the unit disk, we obtain that this function is  in $\mathcal{L}^1_{\beta+\delta}$. So, $R_{2k}(f)$ is a entire function and
	\begin{equation*}\begin{split}
	&(f(w)-T_{2k-1}(f)(w))^{(k)}=\sum_{j=2k}^\infty \frac{f^{(j)}(0)}{j!} j(j-1)\dots (j-k+1)w^{j-k}\\&=w^k \sum_{m=0}^\infty \frac{f^{(m+2k)}(0)}{(m+2k)!} (m+2k)(m+2k-1)\dots (m+k+1)w^{m}.
	\end{split}\end{equation*}
	
	Next, orthogonality gives that
	\begin{equation*}\begin{split}
	&\frac{\a^{1-k}}{\pi}\int_{\C}
	\frac{w^{k+m}w^k}{|w|^{2k}}e^{\a z\overline{w}}e^{-\a|w|^2}
	dA(w)\\& =\frac{\a^{1-k}}{\pi}\int_{\C} |w|^{2k+2m} \frac{(\a
		z)^{2k+m}}{(2k+m)!}e^{-\a|w|^2}dA(w)\\&=
	\frac{2\a^{1+k+m}}{(2k+m)!}\left(\int_0^\infty r^{2k+2m+1}
	e^{-\a r^2}\,dr\right) z^{2k+m}\\&=
	\frac{\a^{k+m}}{(2k+m)!}\left(\int_0^\infty
	\frac{t^{k+m}}{\a^{k+m}}e^{-t}\,dt\right) z^{2k+m}=
	\frac{(k+m)!}{(2k+m)!}z^{2k+m},
	\end{split}\end{equation*}
	where in the previous to the last equality we have made the change
	of variables $t=\a r^2$. Bearing in mind  this calculation, the
	fact that $(f(w)-T_{2k-1}(f)(w))^{(k)} \frac{w^k}{|w|^{2k}}$ is bounded
	on $\D$, we deduce that
	\begin{equation*}\begin{split}
	&\frac{\a^{1-k}}{\pi}\int_{\C} \frac{(f(w)-T_{2k-1}(f)(w))^{(k)}}
	{\overline{w}^k}e^{\a z\overline{w}} e^{-\a|w|^2}dA(w)
	\\&= \sum_{m=0}^\infty (m+2k)\dots (m+k+1) \frac{f^{(m+2k)}(0)}{(m+2k)!}\frac{\a^{1-k}}{\pi}
	\int_{\C} \frac{w^{k+m}w^k}{|w|^{2k}}e^{\a z\overline{w}}e^{-\a|w|^2} dA(w)
	\\&=\sum_{m=0}^\infty\frac{f^{(m+2k)}(0)}{(m+2k)!}z^{m+2k}= f(z)-T_{2k-1}(f)(z),
	\end{split}\end{equation*}
	which gives the desired result.
\end{proof}

\begin{remark}\label{rem:valid}
	By Proposition \ref{pr:embedding}, if $\om\in\Ainfty$ then
	$\Fpaom\subset \mathcal{F}^1_{\a+\delta}$
	 for any $\delta>0$.
 So, all the theses in  Corollary \ref{cor:subhP}, Lemma \ref{le:gooddefiF1} and Lemma \ref{le:representation_formula}  remain true
replacing $\mathcal{F}^1_\beta$ by $\Fpaom$ in the corresponding hypotheses.
	\end{remark}

\subsection{Main results}
First, we will restrict ourselves to the case of the first derivative.

\begin{lemma}\label{lem:subhP}
	Let $p>0$ and $0<\beta<2\alpha$. If $\om\in \Ainfty$ and $g\in \mathcal{F}^{p}_{\b,\om}$,
then	$\|P_\a^+(|g|)\|_{\Lpaom}\lesssim \|g|\|_{\Fpaom}$.
\end{lemma}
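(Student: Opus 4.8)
The plan is to exploit the pointwise domination in Corollary~\ref{cor:subhP} in order to transfer the desired $\mathcal{L}^p$-bound for $P_\a^+(|g|)$, at an arbitrary exponent $p>0$, to an $\mathcal{L}^q$-bound at a \emph{large} exponent $q>1$ for which the positive operator $P^+$ is genuinely bounded on the weighted space. First I would fix $q>1$ large enough that $q\ge p$ and $\om\in\Aq$; such a $q$ exists because $\om\in\Ainfty=\bigcup_{1\le s<\infty}A^{restricted}_s$ and, by Proposition~\ref{prop:aprest}\eqref{item:aprest2}, $A^{restricted}_{s}\subset A^{restricted}_{q}$ whenever $s\le q$. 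I then set $\theta:=p/q\in(0,1]$, so that $p=\theta q$ and $q\,(\theta\a)=p\a$. Since $g\in\mathcal{F}^p_{\b,\om}$ with $\b<2\a$, Proposition~\ref{pr:embedding}\eqref{item:embedding2} (applied with base parameter $\b$) gives $g\in\mathcal{F}^1_{\b+\delta}$ for some $\delta>0$ with $\b+\delta<2\a$, so the hypotheses of Corollary~\ref{cor:subhP}, and in particular the finiteness of all the integrals below, are satisfied.

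Next I would apply Corollary~\ref{cor:subhP} with the exponent $\theta$ to get $[P_\a^+(|g|)(z)]^\theta\lesssim P_{\theta\a}^+(|g|^\theta)(z)$, raise both sides to the power $q$, and integrate against $e^{-p\frac{\a}{2}|z|^2}\om(z)\,dA(z)$. Using $p=\theta q$ and the identity $e^{-p\frac{\a}{2}|z|^2}=e^{-q\frac{\theta\a}{2}|z|^2}$ this yields
\begin{equation*}
\|P_\a^+(|g|)\|_{\Lpaom}^p=\int_\C [P_\a^+(|g|)(z)]^p e^{-p\frac{\a}{2}|z|^2}\om(z)\,dA(z)\lesssim \int_\C [P_{\theta\a}^+(|g|^\theta)(z)]^q e^{-q\frac{\theta\a}{2}|z|^2}\om(z)\,dA(z),
\end{equation*}
whose right-hand side is exactly $\|P_{\theta\a}^+(|g|^\theta)\|_{\mathcal{L}^q_{\theta\a,\om}}^q$. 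Finally, because $\om\in\Aq$ with $q>1$, Theorem~\ref{th:isra} (the equivalence with \eqref{item:isra5}, applied with base parameter $\theta\a$ and exponent $q$) tells me that $P_{\theta\a}^+$ is bounded on $\mathcal{L}^q_{\theta\a,\om}$, so that
\begin{equation*}
\|P_{\theta\a}^+(|g|^\theta)\|_{\mathcal{L}^q_{\theta\a,\om}}^q\lesssim \big\||g|^\theta\big\|_{\mathcal{L}^q_{\theta\a,\om}}^q=\int_\C |g(z)|^{\theta q}e^{-q\frac{\theta\a}{2}|z|^2}\om(z)\,dA(z)=\|g\|_{\Fpaom}^p,
\end{equation*}
where the last equality uses $\theta q=p$, $q\theta\a=p\a$, and the holomorphy of $g$. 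Chaining the two displays closes the argument.

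The crux, and the only place real care is needed, is the simultaneous exponent bookkeeping: one must pick $q$ large enough that \emph{both} $\om\in\Aq$ (so $P^+$ is bounded at exponent $q$) and $\theta=p/q\le 1$ (so Corollary~\ref{cor:subhP} is applicable), and then verify that rescaling the projection parameter from $\a$ to $\theta\a$ keeps the Gaussian weight $e^{-p\frac{\a}{2}|z|^2}$ unchanged after passing to exponent $q$. When $p>1$ and $\om\in A^{restricted}_p$ this collapses to the trivial choice $\theta=1$, $q=p$ and a direct appeal to Theorem~\ref{th:isra}; the role of Corollary~\ref{cor:subhP} is precisely to cover the range $0<p\le 1$ (and, more generally, any $p$ below the integrability exponent of the weight), where $P^+$ need not be bounded at the exponent $p$ itself.
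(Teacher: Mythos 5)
Your proposal is correct and follows essentially the same route as the paper's proof: choose an exponent $q>1$ with $q\ge p$ and $\om\in\Aq$ (the paper calls it $p_0>\max\{p,1\}$), apply Corollary~\ref{cor:subhP} with $\theta=p/q$ to reduce to the boundedness of $P^+_{\theta\a}$ on $\mathcal{L}^{q}_{\theta\a,\om}$, and conclude via Theorem~\ref{th:isra}. The exponent bookkeeping and the justification of the hypotheses of Corollary~\ref{cor:subhP} via Proposition~\ref{pr:embedding} match what the paper does through Remark~\ref{rem:valid}.
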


\begin{proof}
	By Proposition \ref{prop:aprest}\eqref{item:aprest2}, we can assume that $\om\in A_{p_0}^{restricted}$ for some $p_0>\max\{p,1\}$.
 Remark~\ref{rem:valid}, Corollary  \ref{cor:subhP} with $\theta=p/p_0$ together  Theorem \ref{th:isra} give
	\begin{align*}
	\|P_\a^+(|g|)\|_{\Lpaom}&
	\lesssim \|
	(P_{p \alpha/p_0}^ +(|g|^{p/p_0}))^{p_0/p}\|_
	{\Lpaom}
	\asymp \|
	P_{p \alpha/p_0}^+(|g|^{p/p_0})\|_
	{\mathcal{L}^{p_0}_{\a p/p_0,\om}}\\
	&\lesssim \||g|^{p/p_0}\|_{\mathcal{L}^{p_0}_{\a p/p_0,\om}}\asymp
	\|g\|_{\Fpaom}.
	\end{align*}
	This ends the proof.
\end{proof}

\begin{proposition}\label{pr:in1}
	Let $0<p<\infty$, $\alpha>0$ and $\om\in \Ainfty$. Then,
	\begin{equation*}
	|f(0)|+\|f'\|_{\mathcal{F}^{p}_{\alpha,\omega_p}}
	\lesssim  \| f\|_{\Fpaom},\quad  f\in H(\C).
	\end{equation*}
\end{proposition}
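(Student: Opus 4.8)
The plan is to establish the two halves of the left-hand side separately. The estimate $|f(0)|\lesssim\|f\|_{\Fpaom}$ is immediate from the pointwise bound in Corollary~\ref{co:1} applied at $z=0$, since $\om(D(0,t))$ is a fixed positive constant; so the entire weight of the argument lies in controlling $\|f'\|_{\mathcal{F}^p_{\alpha,\om_p}}$ by $\|f\|_{\Fpaom}$. First I would reduce the claim to an estimate for the positive Bergman-type operator. By Remark~\ref{rem:valid}, the reproducing identity of Lemma~\ref{le:gooddefiF1}\eqref{item:gooddefiF12} is valid for $f\in\Fpaom$, so for $k=1$ one has $f'(z)=\a P_\a(\overline{\zeta}f(\zeta))(z)$, and hence the pointwise majorization
\begin{equation*}
|f'(z)|\le \a\,P^+_\a(|\zeta|\,|f|)(z)=\a\int_\C |\z|\,|f(\z)|\,\bigl|e^{\a\overline{\z}z}\bigr|e^{-\a|\z|^2}\,dA(\z).
\end{equation*}

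The next step is to absorb the extra factor $|\z|$ into the weight $(1+|z|)^{-p}$ that defines $\om_p$. Recalling that $\|f'\|^p_{\mathcal{F}^p_{\alpha,\om_p}}=\int_\C |f'(z)|^p e^{-p\frac{\a}{2}|z|^2}\frac{\om(z)}{(1+|z|)^p}\,dA(z)$, the goal becomes
\begin{equation*}
\int_\C \Bigl(\frac{|f'(z)|}{1+|z|}\Bigr)^p e^{-p\frac{\a}{2}|z|^2}\om(z)\,dA(z)\lesssim \|f\|^p_{\Fpaom}.
\end{equation*}
Thus I need $\frac{|f'(z)|}{1+|z|}\lesssim P^+_\a(|f|)(z)$, up to constants depending on $\a$. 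To see this, I would split the integral defining $P^+_\a(|\zeta|\,|f|)$ over the region $|\z|\le 2(1+|z|)$, where $|\z|\lesssim (1+|z|)$ pulls out directly, and over the tail $|\z|>2(1+|z|)$, where the Gaussian decay $e^{-\a|\z|^2}$ dominates the polynomial factor $|\z|$; on that tail $|\z|\,|e^{\a\overline{\z}z}|e^{-\a|\z|^2}\lesssim (1+|z|)|e^{\a\overline{\z}z}|e^{-\a|\z|^2}$ after comparing $|\z|$ against $(1+|z|)$ via the elementary inequality $|\z|e^{-\frac{\a}{2}|\z-z|^2}\lesssim(1+|z|)$ valid when $|\z|>2(1+|z|)$. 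Either way one obtains $\frac{1}{1+|z|}|f'(z)|\lesssim P^+_\a(|f|)(z)$.

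Finally I would invoke Lemma~\ref{lem:subhP}: since $\om\in\Ainfty$ and $f\in\Fpaom$ (so the hypotheses $0<\beta<2\a$ of that lemma are met for a suitable $\beta$, using Remark~\ref{rem:valid} to place $f$ in some $\mathcal{F}^1_\beta$), we have $\|P^+_\a(|f|)\|_{\Lpaom}\lesssim\|f\|_{\Fpaom}$. Chaining the two displayed estimates gives $\|f'\|_{\mathcal{F}^p_{\alpha,\om_p}}\lesssim\|P^+_\a(|f|)\|_{\Lpaom}\lesssim\|f\|_{\Fpaom}$, and together with the bound on $|f(0)|$ this completes the proof. I expect the main obstacle to be the careful handling of the extra $|\z|$ factor, i.e.\ verifying that the weight loss $(1+|z|)^{-p}$ exactly compensates the growth introduced by differentiation; the tail estimate must be done uniformly in $z$, and one must confirm that the resulting weight $\om_p$ still lies in $\Ainfty$ (which is guaranteed by Lemma~\ref{le:weightdistor}) so that Lemma~\ref{lem:subhP} applies.
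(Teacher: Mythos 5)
Your treatment of $|f(0)|$ and your overall skeleton (majorize $f'$ by $P^+_\a$ via the reproducing formula, then invoke Lemma~\ref{lem:subhP}) are sound, but the step where you move the factor $(1+|z|)^{-1}$ past the operator is broken. On the tail $|\z|>2(1+|z|)$ the inequality you display, $|\z|\,|e^{\a\overline{\z}z}|e^{-\a|\z|^2}\lesssim (1+|z|)\,|e^{\a\overline{\z}z}|e^{-\a|\z|^2}$, is literally the statement $|\z|\lesssim 1+|z|$, which contradicts the definition of that region; and the auxiliary bound $|\z|e^{-\frac{\a}{2}|\z-z|^2}\lesssim 1+|z|$ cannot be reinserted into the integrand without fattening the Gaussian, i.e.\ changing the parameter of $P^+_\a$. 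More seriously, the intermediate inequality your splitting is designed to prove, namely $P^+_\a(|\z|\,|f|)(z)\lesssim (1+|z|)\,P^+_\a(|f|)(z)$, is false: taking $f=K_{\z_0}$ and $z=0$, both integrals concentrate near $\z_0/2$ after completing the square, so the left-hand side is comparable to $|\z_0|e^{\frac{\a}{4}|\z_0|^2}$ while the right-hand side is comparable to $e^{\frac{\a}{4}|\z_0|^2}$. Your target estimate $\frac{|f'(z)|}{1+|z|}\lesssim P^+_\a(|f|)(z)$ does happen to be true, but it cannot be reached through $P^+_\a(|\z|\,|f|)$; one would instead write $f'=(f'-\a\bar z f)+\a\bar z f$ and apply the sub-mean-value inequality of Lemma~\ref{le:1} to the invariant derivative.

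The paper sidesteps this difficulty by never commuting $(1+|z|)^{-1}$ with $P^+_\a$: since $\om_p\in\Ainfty$ by Lemma~\ref{le:weightdistor}, Lemma~\ref{lem:subhP} applies directly with the weight $\om_p$ to the \emph{holomorphic} function $g(w)=wf(w)$, giving
$\|f'\|_{\mathcal{F}^p_{\a,\om_p}}\lesssim\|P^+_\a(|wf(w)|)\|_{\mathcal{L}^p_{\a,\om_p}}\lesssim\|wf(w)\|_{\mathcal{F}^p_{\a,\om_p}}\le\|f\|_{\Fpaom}$,
where the last inequality is just $|w|^p(1+|w|)^{-p}\le 1$. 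You could rescue your plan either by adopting this order of operations or by proving the pointwise bound via the invariant derivative as indicated above; as written, the tail estimate is a genuine gap.
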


\begin{proof}
	By Lemma~\ref{le:1}, there is $C=C(p,\om,\a)$ such that
	$$ |f(0)|^p\le C
	\int_{|z|\le 1}|f(z)|^pe^{-p\frac{\a}{2}|z|^2}\om(z)\,dA(z)\lesssim
	\| f\|^p_{\Fpaom}.$$

	So, to conclude the proof we need to show that
	$$
	\|f'\|_{\mathcal{F}^{p}_{\alpha,\omega_p}}
	\lesssim  \| f\|_{\Fpaom}.
	$$
	By Lemma~\ref{le:weightdistor},
	$\om_p(z)=\frac{\om(z)}{(1+|z|)^p}\in \Ainfty$. So,  by  Remark~\ref{rem:valid}, Lemma~\ref{le:gooddefiF1} and  Lemma~\ref{lem:subhP}
$$
\|f'\|_{\mathcal{F}^{p}_{\alpha,\omega_p}}
\lesssim \|P_\a^+(|wf(w)|)\|_{\mathcal{L}^{p}_{\alpha,\omega_p}} \lesssim \|wf(w)\|_{\mathcal{F}^{p}_{\alpha,\omega_p}} \lesssim\| f\|_{\Fpaom}.
$$
	This ends the proof.
\end{proof}

\begin{proposition}\label{pr:in2}
	Let $0<p<\infty$, $\alpha>0$ and $\om\in \Ainfty$. Then,
		\begin{equation*}
\| f\|_{\Fpaom}\lesssim |f(0)|+\|f'\|_{\mathcal{F}^{p}_{\alpha,\omega_p}},\quad  f\in H(\C).
	\end{equation*}
\end{proposition}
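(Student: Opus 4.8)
The plan is to apply the representation formula of Lemma~\ref{le:representation_formula} with $k=1$,
$$
f(z)=f(0)+f'(0)z+\frac1{\pi}\int_{\C}\frac{f'(w)-f'(0)}{\overline{w}}e^{\a z\overline{w}}e^{-\a|w|^2}\,dA(w)=:f(0)+f'(0)z+R_2(f)(z),
$$
and to estimate the $\Fpaom$-norm of each of the three summands separately. We may assume that the right-hand side is finite. Since $\om_p\in\Ainfty$ by Lemma~\ref{le:weightdistor}, Proposition~\ref{pr:embedding}\eqref{item:embedding2} gives $f'\in\mathcal{F}^1_{\a+\delta}$ for every $\delta>0$; integrating the pointwise bound $|f'(w)|\lesssim e^{\frac{\a+\delta}2|w|^2}\|f'\|_{\mathcal{F}^1_{\a+\delta}}$ along the segment $[0,z]$ shows that $f\in\mathcal{F}^\infty_{\a+2\delta}\subset\mathcal{F}^1_{\a+3\delta}$, so choosing $\delta$ small enough we get $f\in\mathcal{F}^1_\beta$ for some $\beta\in(0,2\a)$ and the representation formula indeed applies.

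For the first two summands I would use Lemma~\ref{le:2}. Since $e^{-p\frac\a2|z|^2}$ and $|z|^pe^{-p\frac\a2|z|^2}$ are both dominated by $e^{-\beta_0|z|^2}$ for a suitable $\beta_0>0$, that lemma yields $\int_\C e^{-p\frac\a2|z|^2}\om(z)\,dA(z)<\infty$ and $\int_\C|z|^pe^{-p\frac\a2|z|^2}\om(z)\,dA(z)<\infty$, whence $\|f(0)\|_{\Fpaom}\lesssim|f(0)|$ and $\|f'(0)z\|_{\Fpaom}\lesssim|f'(0)|$. Finally, evaluating Corollary~\ref{co:1} for $f'$ with the weight $\om_p$ at $z=0$ gives $|f'(0)|\lesssim\|f'\|_{\mathcal{F}^{p}_{\a,\om_p}}$, so both terms are controlled by the right-hand side.

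The core of the argument is the term $R_2(f)$. Writing $G(w):=\frac{f'(w)-f'(0)}{w}$, which is entire because $f'(w)-f'(0)$ vanishes at the origin, we have $\frac{f'(w)-f'(0)}{\overline w}=G(w)\frac{w}{\overline w}$, so $|R_2(f)(z)|\le\frac1\a P_\a^+(|G|)(z)$ pointwise (the operator $P_\a^+$ only sees the modulus, and $|w/\overline w|=1$). This is precisely the step that makes the case $0<p\le1$ tractable: although the density $\frac{f'(w)-f'(0)}{\overline w}$ is not holomorphic, its modulus coincides with that of the \emph{entire} function $G$, so Lemma~\ref{lem:subhP} applies to $G$ and gives $\|R_2(f)\|_{\Fpaom}\lesssim\|P_\a^+(|G|)\|_{\Lpaom}\lesssim\|G\|_{\Fpaom}$. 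It therefore remains to bound
$$
\|G\|_{\Fpaom}^p=\int_\C\frac{|f'(w)-f'(0)|^p}{|w|^p}e^{-p\frac\a2|w|^2}\om(w)\,dA(w).
$$

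I would split this integral over $\{|w|>1\}$ and $\{|w|\le1\}$. On $\{|w|>1\}$ one has $|w|^{-p}\lesssim(1+|w|)^{-p}$ and $|f'(w)-f'(0)|^p\lesssim|f'(w)|^p+|f'(0)|^p$, so this part is bounded by $\|f'\|_{\mathcal{F}^{p}_{\a,\om_p}}^p+|f'(0)|^p\int_\C e^{-p\frac\a2|w|^2}\om(w)\,dA(w)\lesssim\|f'\|_{\mathcal{F}^{p}_{\a,\om_p}}^p$, again using Lemma~\ref{le:2}. On the unit disc the Gaussian factor is comparable to $1$ and, $G$ being entire, the maximum principle gives $\sup_{|w|\le1}|G(w)|=\max_{|w|=1}|f'(w)-f'(0)|\lesssim\max_{|w|=1}|f'(w)|+|f'(0)|$, which by Corollary~\ref{co:1} (applied to $f'$ with weight $\om_p$, the discs $D(w,t)$ with $|w|=1$ having comparable $\om_p$-measure by Remark~\ref{rem:discoscomparables}) is $\lesssim\|f'\|_{\mathcal{F}^{p}_{\a,\om_p}}$; integrating against $\om$ over the bounded set $D(0,1)$ then contributes $\lesssim\|f'\|_{\mathcal{F}^{p}_{\a,\om_p}}^p$. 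Combining the three estimates through the ($p$-)triangle inequality yields the claim. The main obstacle is exactly the local term near $w=0$: the singular factor $1/\overline w$ and the lack of holomorphy are handled by passing to $G$ and using the maximum principle together with the subharmonic-type bound of Corollary~\ref{co:1}.
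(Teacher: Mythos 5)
Your proposal is correct and follows essentially the same route as the paper: the $k=1$ representation formula, passing to the entire function $G(w)=\bigl(f'(w)-f'(0)\bigr)/w$ so that $|R_2(f)|\le \a^{-1}P_\a^+(|G|)$, controlling this via Lemma~\ref{lem:subhP}, and bounding $\|G\|_{\Fpaom}$ by splitting at $|w|=1$. The only (immaterial) difference is that near the origin you invoke the maximum principle for $G$ where the paper writes $G(\z)=\int_0^1 f''(t\z)\,dt$ and bounds $\sup_{|w|\le 1}|f''|$; both yield the same local estimate $\lesssim\|f'\|_{\mathcal{F}^p_{\a,\om_p}}$.
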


\begin{proof}
	By Lemma~\ref{le:weightdistor}, if $\om\in\Ainfty$, then $\om_p$ is also in $\Ainfty$.
		By Lemma~\ref{le:weightdistor} and Proposition~\ref{pr:embedding} ,	$f'\in \mathcal{F}^p_{\a,\om_{p}}\subset \mathcal{F}^1_{\a+\delta}$, for any
	$0<\delta<\a$. So, by \eqref{eq:LPfcla},  we have  $f\in \mathcal{F}^1_{\a+\delta}$. Thus, the representation formula in Lemma \ref{le:representation_formula} holds and Lemma~\ref{le:2} yield
	$$
\| f\|_{\Fpaom}\lesssim
|f(0)|+\|f'(0)z\|_{\Fpaom}+\|R_2(f)\|_{\Fpaom},
$$
where
$$
R_2(f)(z)=\frac{1}{\pi}\int_{\C}
\frac{f'(w)-f'(0)}{\overline{w}}e^{\a z\overline{w}}
e^{-\a|w|^2}dA(w).
	$$
	
By Lemma~\ref{le:1} and Proposition~\ref{pr:embedding},
	\begin{equation*}%\label{eq:6}
|f'(0)|
\lesssim  \|f'\|_{\mathcal{F}^1_{\alpha+\delta}}\lesssim
	\|f'\|_{\mathcal{F}_{\a,\om_{p}}},\quad \delta>0.
	\end{equation*}	
	Since $\|z\|_{\Fpaom}<\infty$, we obtain
	$$
|f(0)|+\|f'(0)z\|_{\Fpaom}
\lesssim
|f(0)|+\|f'\|_{\mathcal{F}_{\a,\om_{p}}}.
	$$

	Now, let us deal with the last term above. 	
	Let $g(\z):=\frac{f'(\z)-f'(0)}{\z}$.
	Let us observe that if $|\z|\le 1$ and $\delta\in (0,\alpha)$, it follows from Proposition~\ref{pr:embedding} and Lemma~\ref{le:weightdistor}
\begin{equation*}
  \begin{split}
\left|g(\z) \right|
&= \left| \int_0^1 f^{''}(t\z)dt \right|\leq \sup_{|w|\leq 1}|f^{''}(w)|
\\
&\lesssim
	\int_{D(0,2)}|f'(w)|
e^{-{(\alpha+\delta)}|w|^2} \,dA(w)
\lesssim 	\|f'\|_{\mathcal{F}^1_{\a+\delta}}
\lesssim \|f'\|_{\mathcal{F}^p_{\a,\om_p}},
\end{split}\end{equation*}
	where in the
 second to last inequality we use that
	$e^{-{(\alpha+\delta)}|w|^2}\asymp 1$ on $D(0,2)$.

	Moreover, if $|\z|>1$,
	$$
\left|g(\z)\right|\le 2\frac{|f'(\z)|+|f'(0)|}{1+|\z|}
	\lesssim \frac{|f'(\z)|}{1+|\z|}+|f'(0)|.
	$$
	Thus, $g$ is a entire function which satisfies
	$$
	\left|g(\z) \right|\lesssim \frac{|f'(\z)|}{1+|\z|}
		+\|f'\|_{\mathcal{F}^p_{\a,\om_p}},
	$$
	which shows that  $g\in \Fpaom$ and
	\begin{equation}\label{eq:eqgf}
	\|g\|_{\Fpaom}\lesssim \|f'\|_{\mathcal{F}^p_{\a,\om_p}}.
	\end{equation}
	
Therefore, by Lemma \ref{lem:subhP}
	$$
	\|R_2(f)\|_{\Fpaom}
	\lesssim\|P_\a^+(|g|)(z)\|_{\Lpaom}
	\lesssim
	\|g\|_{\Fpaom}\lesssim \|f'\|_{\mathcal{F}^p_{\a,\om_p}}.
	$$	
	This ends the proof.
\end{proof}

\begin{Pf}{\em{Theorem~\ref{th:LPformula}.}}
	It follows from Propositions~\ref{pr:in1} and~\ref{pr:in2} that
	\begin{equation}\label{eq:LP1}
	\| f\|_{\Fpaom} \asymp
	|f(0)|+\|f'\|_{\mathcal{F}^{p}_{\alpha,\omega_p}}.
	\end{equation}

	By  Lemma~\ref{le:weightdistor}, $\om_{kp}=(\om_{(k-1)p})_p\in \Ainfty$ for any
	$k\in\N$. Therefore, consecutive iterations of \eqref{eq:LP1} give
	that
	$$\|f\|_{\Fpaom}\asymp
	\sum_{j=1}^{k-1}|f^{(j)}(0)|
	+\|f^{(k)}\|_{\mathcal{F}^{p}_{\alpha,\omega_{kp}}}.
	$$
	This finishes the proof.
\end{Pf}

\section{Carleson measures on weighted Fock-Sobolev spaces}\label{sec:Carleson}
We begin proving appropriate norm estimates  for the family of test functions employed  in the proof of Theorem~\ref{th:differentiation}.

\subsection{Test functions}
We will write $K_{\a,a}(z)=K_a(z)=e^{\a \overline{a}z}$ for the
reproducing kernels of the classical Fock space $\mathcal{F}^2_\a$.
\begin{proposition}\label{pr:kernelestimate}
	Let $\a,p\in (0,\infty)$ and $\om\in\Ainfty$. Then,
	\begin{equation*}%\label{eq:kernelestimate}
	\| K_a\|^p_{\Fpaom}\asymp e^{\a
		\frac{p|a|^2}{2}}\omega\left(D(a,1)\right),
\quad a\in\C.
	\end{equation*}
\end{proposition}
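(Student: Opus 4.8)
The plan is to evaluate the norm directly and reduce it to a size estimate for $\om$ in a fixed neighbourhood of $a$. Since $|K_a(z)| = |e^{\alpha\overline{a}z}| = e^{\alpha\mathcal{R}(\overline{a}z)}$, completing the square yields the pointwise identity
$$
|K_a(z)|^p\,e^{-\frac{p\alpha}{2}|z|^2}
= e^{\,p\alpha\mathcal{R}(\overline{a}z)-\frac{p\alpha}{2}|z|^2}
= e^{\frac{p\alpha}{2}|a|^2}\,e^{-\frac{p\alpha}{2}|z-a|^2}.
$$
Integrating against $\om\,dA$ and factoring out $e^{\frac{p\alpha}{2}|a|^2}$ shows that the assertion is equivalent to
$$
\int_{\C} e^{-\frac{p\alpha}{2}|z-a|^2}\,\om(z)\,dA(z)\asymp \om\left(D(a,1)\right),
$$
with comparability constants independent of $a$. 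This is the estimate I would prove, and essentially all of the difficulty lies in its \emph{uniformity} in $a$.

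For the lower bound I would simply discard the mass outside $D(a,1)$: on that disc $e^{-\frac{p\alpha}{2}|z-a|^2}\ge e^{-\frac{p\alpha}{2}}$, so the left-hand side is at least $e^{-\frac{p\alpha}{2}}\om(D(a,1))$, which gives $\gtrsim$ with an absolute constant.

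For the upper bound I would translate the variable by $a$. Setting $\om_{[a]}(u):=\om(a+u)$ and substituting $u=z-a$ turns the integral into $\int_{\C}e^{-\frac{p\alpha}{2}|u|^2}\om_{[a]}(u)\,dA(u)$. By Lemma~\ref{le:translation} we have $\om_{[a]}\in\Ainfty$ with $\mathcal{C}_{p,r}(\om_{[a]})=\mathcal{C}_{p,r}(\om)$, so for a fixed small $r$ (as in Lemma~\ref{le:2}) I may apply Lemma~\ref{le:2} with $\beta=\frac{p\alpha}{2}$ to bound this by $C\,\om_{[a]}(Q_r(0))=C\,\om(Q_r(a))$. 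The crucial point is that, because $\mathcal{C}_{p,r}$ and hence the constant $M$ of Lemma~\ref{le:isra34} are invariant under translation, the constant $C$ furnished by Lemma~\ref{le:2} is \emph{the same for every} $a$. Finally, Remark~\ref{rem:discoscomparables} (in particular \eqref{eq:discoscomparables}) allows me to replace the square $Q_r(a)$ of fixed side by the disc $D(a,1)$ of fixed radius, again with constants depending only on $\om,p,r$, yielding $\om(Q_r(a))\asymp\om(D(a,1))$ uniformly in $a$ and completing the $\lesssim$ direction.

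The main obstacle is therefore not any individual inequality but the bookkeeping required to keep all implied constants free of $a$. I would emphasize that this uniformity rests entirely on the translation invariance of the $\Ap$-constant (Lemma~\ref{le:translation}), which feeds into the doubling and comparability statements of Lemma~\ref{le:isra34} and Remark~\ref{rem:discoscomparables}; once that observation is in place, both bounds follow at once.
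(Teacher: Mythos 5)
Your proposal is correct and follows essentially the same route as the paper: complete the square to reduce to $\int_{\C}e^{-\frac{p\a}{2}|z-a|^2}\om(z)\,dA(z)$, translate by $a$, and invoke Lemma~\ref{le:translation} together with Lemma~\ref{le:2} (plus the trivial lower bound and Remark~\ref{rem:discoscomparables} to pass from squares to discs). Your explicit remark that the uniformity in $a$ rests on the translation invariance of the $\Ap$-constant is exactly the point the paper leaves implicit.
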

\begin{proof}
	Bearing in mind that
	
	\begin{equation*}\begin{split}
	\| K_a\|^p_{\Fpaom} &= e^{\a \frac{p|a|^2}{2}}\int_{\C}e^{\a
		\frac{-p|z-a|^2}{2}}\om(z)\,dA(z)= e^{\a
		\frac{p|a|^2}{2}}\int_{\C}e^{\a
		\frac{-p|u|^2}{2}}\om_{[a]}(u)\,dA(u),
	\end{split}\end{equation*}
	the proof follows from Lemma~\ref{le:translation} and
	Lemma~\ref{le:2}.
\end{proof}

\begin{proposition}\label{pr:kerneldeco}
	Let  $\a,p\in (0,\infty)$ and $\om\in\Ainfty$. Then, for any
	sequence $\{a_\nu\}_{\nu\in r\Z^2}\in l^p$,
	$$f(z)=\sum_{\nu\in r\Z^2} a_\nu \frac{K_{\nu}(z)}{\|
		K_\nu\|_{\Fpaom}}\in \Fpaom,$$ with $\|  f\|_{\Fpaom}\lesssim \|
	\{a_\nu\}\| _{l^p}$.
\end{proposition}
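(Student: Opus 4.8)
\medskip

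The plan is to estimate $\|f\|_{\Fpaom}^p$ directly by partitioning $\C$ into the squares $\{Q_r(\mu)\}_{\mu\in r\Z^2}$ and summing the contributions on each square. First I would split into the two regimes $p\ge 1$ and $0<p<1$, since the former allows a duality/Minkowski-type argument while the latter requires the elementary inequality $\left(\sum_\nu |b_\nu|\right)^p\le \sum_\nu |b_\nu|^p$.

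\medskip

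I would begin with the easier case $0<p\le 1$. On a fixed square $Q_r(\mu)$ one writes, using $\left(\sum |b_\nu|\right)^p\le\sum|b_\nu|^p$,
\begin{equation*}
\int_{Q_r(\mu)}|f(z)|^p e^{-p\frac{\a}{2}|z|^2}\om(z)\,dA(z)
\lesssim \sum_{\nu\in r\Z^2}\frac{|a_\nu|^p}{\|K_\nu\|_{\Fpaom}^p}
\int_{Q_r(\mu)}|K_\nu(z)|^p e^{-p\frac{\a}{2}|z|^2}\om(z)\,dA(z).
\end{equation*}
Since $|K_\nu(z)|^p e^{-p\frac{\a}{2}|z|^2}=e^{-p\frac{\a}{2}|z-\nu|^2}e^{p\frac{\a}{2}|\nu|^2}$, and $\|K_\nu\|_{\Fpaom}^p\asymp e^{p\frac{\a}{2}|\nu|^2}\om(D(\nu,1))$ by Proposition~\ref{pr:kernelestimate}, the weight factors $e^{p\frac{\a}{2}|\nu|^2}$ cancel, leaving a Gaussian $e^{-p\frac{\a}{2}|z-\nu|^2}$ which decays rapidly in $|\mu-\nu|$ for $z\in Q_r(\mu)$. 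The remaining integral of $\om$ over $Q_r(\mu)$ is comparable, via Lemma~\ref{le:isra34} and Remark~\ref{rem:discoscomparables}, to $\om(D(\nu,1))M^{|\mu-\nu|}$; dividing by $\om(D(\nu,1))$ and summing over $\mu$ gives $\sum_\mu e^{-c|\mu-\nu|^2}M^{|\mu-\nu|}\lesssim 1$. Summing first in $\mu$ and then in $\nu$ by Fubini yields $\|f\|_{\Fpaom}^p\lesssim\sum_\nu|a_\nu|^p$.

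\medskip

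For $p>1$ the pointwise-power trick is unavailable, so I would instead estimate the sum $\sum_\nu|a_\nu|\,|K_\nu(z)|/\|K_\nu\|_{\Fpaom}$ by splitting into dyadic (or integer) annuli according to $|\mu-\nu|$, on each of which the Gaussian supplies a factor $e^{-c|\mu-\nu|^2}$, and then applying Hölder's inequality in $\nu$ against the summable weight furnished by the Gaussian. Concretely, for $z\in Q_r(\mu)$ one gets
\begin{equation*}
|f(z)|e^{-\frac{\a}{2}|z|^2}\om(z)^{1/p}
\lesssim \sum_{\nu}|a_\nu|\,\frac{e^{-\frac{\a}{2}|z-\nu|^2}\om(z)^{1/p}}{\om(D(\nu,1))^{1/p}},
\end{equation*}
and after raising to the $p$-th power, integrating over $Q_r(\mu)$, and using Remark~\ref{rem:discoscomparables} to replace $\om(Q_r(\mu))^{1/p}$ by $\om(D(\nu,1))^{1/p}M^{|\mu-\nu|/p}$, one bounds the $\mu$-integral by a convolution of $\{|a_\nu|^p\}$ with the rapidly decaying kernel $e^{-c|\mu-\nu|^2}$; Young's inequality then gives $\|f\|_{\Fpaom}^p\lesssim\|\{a_\nu\}\|_{l^p}^p$. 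The main obstacle, in both cases, is controlling the interaction between the exponential decay of the shifted kernels and the possible exponential \emph{growth} of the ratio $\om(Q_r(\mu))/\om(D(\nu,1))$ allowed by Lemma~\ref{le:isra34}; the point is that the Gaussian decay $e^{-c|\mu-\nu|^2}$ dominates the at-most-geometric factor $M^{|\mu-\nu|}$, so every off-diagonal sum converges and contributes only a bounded constant. That the series defining $f$ converges locally uniformly to an entire function follows a posteriori from Corollary~\ref{co:1} together with the finiteness of $\|f\|_{\Fpaom}$.
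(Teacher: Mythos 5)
Your argument is correct, and in the crucial case $p>1$ it rests on the same mechanism as the paper's proof: H\"older's inequality in $\nu$ with the Gaussian serving as the Schur weight, together with the observation that the decay $e^{-c|\mu-\nu|^2}$ beats the at-most-geometric growth $M^{|\mu-\nu|}$ permitted by Lemma~\ref{le:isra34}. The difference is in the bookkeeping. You tile $\C$ by the squares $Q_r(\mu)$ and reduce everything to a discrete convolution inequality on the lattice, whereas the paper never tiles: it applies H\"older pointwise in the form
\begin{equation*}
\Bigl(\sum_\nu \tfrac{|a_\nu|\,|K_\nu(z)|}{e^{\a|\nu|^2/2}\om(D(\nu,1))^{1/p}}\Bigr)^p
\lesssim \Bigl(\sum_\nu \tfrac{|a_\nu|^p|K_\nu(z)|}{e^{\a|\nu|^2/2}\om(D(\nu,1))}\Bigr)
\Bigl(\sum_\nu \tfrac{|K_\nu(z)|}{e^{\a|\nu|^2/2}}\Bigr)^{p-1},
\end{equation*}
bounds the second factor by $e^{\a(p-1)|z|^2/2}$ via Lemma~\ref{le:1}, and then integrates over all of $\C$, recognizing $\int_\C |K_\nu(z)|e^{-\a|z|^2/2}\om(z)\,dA(z)\asymp e^{\a|\nu|^2/2}\om(D(\nu,1))$ from Proposition~\ref{pr:kernelestimate}. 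This last recognition is what makes the off-diagonal accounting disappear, and it is especially telling for $0<p\le 1$: there the paper's proof is a one-line interchange of sum and integral, because $\int_\C |K_\nu(z)|^p e^{-p\a|z|^2/2}\om(z)\,dA(z)$ is \emph{exactly} $\|K_\nu\|^p_{\Fpaom}$ and cancels the denominator, so your square-by-square estimate with the $M^{|\mu-\nu|}$ correction, while valid, is doing avoidable work. Your closing remark on local uniform convergence via Corollary~\ref{co:1} is a welcome detail that the paper leaves implicit.
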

\begin{proof}
	If $0<p\le 1$, then
	\begin{equation*}\begin{split}
	\|  f\|^p_{\Fpaom}\le \int_{\C} \sum_{\nu\in r\Z^2}
	\frac{|a_\nu|^p|K_{\nu}(z)|^p}{\| K_\nu\|^p_{\Fpaom}} e^{\a
		\frac{-p|z|^2}{2}}\om(z)\,dA(z)= \sum_{\nu\in r\Z^2} |a_\nu|^p.
	\end{split}\end{equation*}
	If $1<p<\infty$, then bearing in mind
	Proposition~\ref{pr:kernelestimate},
	\begin{equation*}\begin{split}
	&\|  f\|^p_{\Fpaom} \\ &  \lesssim \int_{\C} \left(\sum_{\nu\in
		r\Z^2} \frac{|a_\nu|	|K_{\nu}(z)|}{e^{\a
			\frac{|\nu|^2}{2}}\om\left(D(\nu,1)\right)^{1/p}}
\right)^p e^{\a \frac{-p|z|^2}{2}}\om(z)\,dA(z)
	\\ & \lesssim \int_{\C} \left(\sum_{\nu\in r\Z^2}
	\frac{|a_\nu|^p	|K_{\nu}(z)|}{e^{\a \frac{|\nu|^2}{2}}\om\left(D(\nu,1)\right)}
\right) \left(\sum_{\nu\in r\Z^2}
	\frac{|K_{\nu}(z)|}{e^{\a \frac{|\nu|^2}{2}}} \right)^{p-1}
	e^{\a \frac{-p|z|^2}{2}}\om(z)\,dA(z).
	\end{split}\end{equation*}
	\par Next, by Lemma~\ref{le:1}
	\begin{equation*}\begin{split}
	\sum_{\nu\in r\Z^2} |K_{\nu}(z)|e^{-\a \frac{|\nu|^2}{2}} & \lesssim
	\sum_{\nu\in r\Z^2} \int_{D\left(\nu, \frac{r}{2} \right)}
	|K_{z}(u)|e^{-\a \frac{|u|^2}{2}}\,dA(u)
	\\ & \lesssim \int_{\C}|K_{z}(u)|e^{-\a \frac{|u|^2}{2}}\,dA(u)
	\asymp e^{\a \frac{|z|^2}{2}},\quad z\in\C.
	\end{split}\end{equation*}
	So, by Proposition~\ref{pr:kernelestimate}
	\begin{equation*}\begin{split}
	\|  f\|^p_{\Fpaom}  & \lesssim \sum_{\nu\in r\Z^2}
	\frac{|a_\nu|^p}{e^{\a \frac{|\nu|^2}{2}}\om\left(D(\nu,1)\right)}
	\int_{\C} |K_{\nu}(z)|
	e^{-\a \frac{|z|^2}{2}}\om(z)\,dA(z)
	\\ & \asymp \sum_{\nu\in r\Z^2}
	|a_\nu|^p.
	\end{split}\end{equation*}
	This finishes the proof.
\end{proof}

\subsection{Proof of Theorem \ref{th:differentiation}}
First, we will deal with the case $n=0$. Later on, we will use Theorem~\ref{th:LPformula} to deduce the rest of cases from this particular one.
\subsection{Case $p\leq q$}

\begin{theorem}\label{carlesonqmayorp}
	Let  $\a\in (0,\infty)$, $\om\in\Ainfty$ and let $\mu$ be a positive
	Borel measure on $\C$. For $0<p\le q<\infty$, the following
	conditions are equivalent;
	\begin{enumerate}
		\item \label{carlesonqmayorp1} $\mu$ is a $q$-Carleson measure for $\Fpaom$;
		\item \label{carlesonqmayorp2} $\| K_a\|_{L^q(\mu)}\lesssim \| K_a\|_{\Fpaom},
		\quad a\in\C$;
		\item \label{carlesonqmayorp3} $\int_{D(a,1)} e^{\a\frac{q}{2}|z|^2}\,d\mu(z)\lesssim \left(\om(D(a,1))\right)^{\frac{q}{p}},\quad
		a\in\C$.
	\end{enumerate}
	Moreover,
	\begin{equation*}
	\|I_d\|_{\Fpaom\to L^q(\mu)}^q\asymp \sup_{a\in \C}\frac{\|
		K_a\|^q_{L^q(\mu)}}{\| K_a\|^q_{L^p(\mu)}} \asymp \sup_{a\in
		\C}\frac{\int_{D(a,1)}
		e^{\a\frac{q}{2}|z|^2}\,d\mu(z)}{\left(\om(D(a,1))\right)^{\frac{q}{p}}}.
	\end{equation*}
\end{theorem}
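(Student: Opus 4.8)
The plan is to prove the chain of implications
\eqref{carlesonqmayorp1} $\Rightarrow$ \eqref{carlesonqmayorp2} $\Rightarrow$ \eqref{carlesonqmayorp3} $\Rightarrow$ \eqref{carlesonqmayorp1}, tracking the relevant constants so that the quantitative equivalence for $\|I_d\|^q_{\Fpaom\to L^q(\mu)}$ drops out at the end. The implication \eqref{carlesonqmayorp1} $\Rightarrow$ \eqref{carlesonqmayorp2} is immediate: testing the Carleson embedding on the normalized reproducing kernels $K_a$ gives $\|K_a\|_{L^q(\mu)}\le \|I_d\|\,\|K_a\|_{\Fpaom}$, which is exactly condition \eqref{carlesonqmayorp2} and simultaneously shows that the middle supremum is dominated by $\|I_d\|^q$. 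For \eqref{carlesonqmayorp2} $\Leftrightarrow$ \eqref{carlesonqmayorp3} the key is to translate both sides into the language of the weight $\om$ using Proposition~\ref{pr:kernelestimate}. On one hand $\|K_a\|^q_{\Fpaom}\asymp e^{\a\frac{q|a|^2}{2}}\om(D(a,1))^{q/p}$; on the other hand, restricting the integral defining $\|K_a\|_{L^q(\mu)}$ to the disc $D(a,1)$ and using that $|K_a(z)|=e^{\a\R(\overline{a}z)}\asymp e^{\a|a|^2/2}e^{\a|z|^2/2}$ on $D(a,1)$ (since $|z-a|\le 1$ there), one sees that $\|K_a\|^q_{L^q(\mu)}\gtrsim e^{\a\frac{q|a|^2}{2}}\int_{D(a,1)}e^{\a\frac{q}{2}|z|^2}\,d\mu(z)$. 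Cancelling the common factor $e^{\a\frac{q|a|^2}{2}}$ converts \eqref{carlesonqmayorp2} into exactly the pointwise bound in \eqref{carlesonqmayorp3}. This elementary localization argument is the routine direction.

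The genuinely delicate step is \eqref{carlesonqmayorp3} $\Rightarrow$ \eqref{carlesonqmayorp1}, where one must pass from the local testing condition back to the global embedding inequality $\int_\C|f|^q\,d\mu\lesssim \|f\|^q_{\Fpaom}$ with the correct control by $\|G\|_{L^\infty}$. I would discretize $\C$ by the lattice $r\Z^2$ and its associated squares $Q_r(\nu)$ (equivalently discs $D(\nu,1)$, using Remark~\ref{rem:discoscomparables}), writing $\int_\C|f|^q\,d\mu=\sum_\nu\int_{D(\nu,1)}|f|^q\,d\mu$. On each disc I would invoke the subharmonic-type pointwise estimate of Lemma~\ref{le:1}, which bounds $|f(z)|^pe^{-\frac{p\a}{2}|z|^2}$ by the $\om$-average of $|f|^pe^{-\frac{p\a}{2}|\cdot|^2}$ over $D(z,t)$; raising to the power $q/p$ and integrating against $d\mu$ over $D(\nu,1)$, the hypothesis \eqref{carlesonqmayorp3} (i.e. $\int_{D(\nu,1)}e^{\a\frac{q}{2}|z|^2}\,d\mu\lesssim \om(D(\nu,1))^{q/p}$) cancels the weight factor and leaves a single $\om$-averaged term on each disc. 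The main obstacle is assembling these local pieces into the global norm: because $q/p\ge 1$, summing the $q/p$-th powers requires care, and one exploits the elementary inequality $\big(\sum_\nu b_\nu\big)^{q/p}\ge \sum_\nu b_\nu^{q/p}$ for $q/p\ge 1$ to pass from $\ell^{q/p}$ of the local mass terms back to $\big(\sum_\nu b_\nu\big)^{q/p}=\|f\|^q_{\Fpaom}$, after accounting for the bounded overlap of the enlarged discs $D(\nu,t)$.

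Throughout, keeping the constants explicit yields the quantitative statement: the first implication gives $\sup_a \|K_a\|^q_{L^q(\mu)}/\|K_a\|^q_{L^p(\mu)}\lesssim\|I_d\|^q$ (note $\|K_a\|_{L^p(\mu)}$ should read $\|K_a\|_{\Fpaom}$ in the displayed equivalence), the localization shows this supremum is comparable to $\sup_a\int_{D(a,1)}e^{\a\frac{q}{2}|z|^2}\,d\mu/\om(D(a,1))^{q/p}$, and the covering argument closes the loop by bounding $\|I_d\|^q$ by this same supremum. I expect the finite-overlap bookkeeping in the last implication, together with the correct handling of the $q/p\ge 1$ summation, to be where essentially all the real work lies; everything else is a direct translation through Proposition~\ref{pr:kernelestimate} and Lemma~\ref{le:1}.
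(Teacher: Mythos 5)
Your proposal is correct and follows essentially the same route as the paper: testing on the kernels $K_a$ with Proposition~\ref{pr:kernelestimate}, localizing to $D(a,1)$ where $e^{-\a\frac{q}{2}|z-a|^2}\asymp 1$ to get \eqref{carlesonqmayorp2}$\Rightarrow$\eqref{carlesonqmayorp3}, and then the lattice decomposition combined with Lemma~\ref{le:1}, the comparability $\om(D(z,1))\asymp\om(D(\nu,1))$ from Lemma~\ref{le:isra34}, the inequality $\sum_\nu b_\nu^{q/p}\le\bigl(\sum_\nu b_\nu\bigr)^{q/p}$ and finite overlap for \eqref{carlesonqmayorp3}$\Rightarrow$\eqref{carlesonqmayorp1}. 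You are also right that the denominator $\|K_a\|^q_{L^p(\mu)}$ in the displayed norm equivalence is a typo for $\|K_a\|^q_{\Fpaom}$.
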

\begin{proof}
	Let us denote $G:=\sup_{a\in
		\C}\frac{\int_{D(a,1)}
		e^{\a\frac{q}{2}|z|^2}\,d\mu(z)}{\left(\int_{D(a,1)}\om(z)\,dA(z)\right)^{\frac{q}{p}}}.$
	
	\par \eqref{carlesonqmayorp1}$\Rightarrow$\eqref{carlesonqmayorp2} and the inequality $\|I_d\|_{\Fpaom\to L^q(\mu)}^q\ge\sup_{a\in \C}\frac{\|
		K_a\|^q_{L^q(\mu)}}{\| K_a\|^q_{L^p(\mu)}}$ are clear.
	\par Assume \eqref{carlesonqmayorp2}. Then, by Proposition~\ref{pr:kernelestimate}, for any $a\in\C$,
	$$
	\int_{\C}e^{q\a\mathcal{R}(\overline{a}z)} e^{-\a\frac{q}{2}|a|^2}\,d\mu(z)=
	\int_{\C}e^{\a\frac{q}{2}|z|^2} e^{-\a\frac{q}{2}|z-a|^2}\,d\mu(z)
	\lesssim
	\left(\om(D(a,1))\right)^{\frac{q}{p}}.
	$$
So, for any $a\in\C$,
	$$
	\left(\om(D(a,1))\right)^{\frac{q}{p}}\gtrsim
	\int_{D(a,1)}e^{\a\frac{q}{2}|z|^2}
	e^{-\a\frac{q}{2}|z-a|^2}\,d\mu(z)\asymp \int_{D(a,1)}
	e^{\a\frac{q}{2}|z|^2}\,d\mu(z),
	$$
	 which gives \eqref{carlesonqmayorp3}
	and the inequality
	$$ \sup_{a\in \C}\frac{\|
		K_a\|^q_{L^q(\mu)}}{\| K_a\|^q_{L^p(\mu)}} \gtrsim \sup_{a\in
		\C}\frac{\int_{D(a,1)}
		e^{\a\frac{q}{2}|z|^2}\,d\mu(z)}{\left(\om(D(a,1))\right)^{\frac{q}{p}}}.$$
	\par Now, let
	us prove \eqref{carlesonqmayorp3}$\Rightarrow$\eqref{carlesonqmayorp1}.
	It follows from \eqref{carlesonqmayorp3} and Lemma \ref{le:1} that
	\begin{equation*}\begin{split}
	\int_{\C} |f(z)|^q&\,d\mu(z)\le
	\sum_{\nu\in\mathbb{Z}^2}\int_{D(\nu,1)} |f(z)|^q\,d\mu(z)
	\\ & = \sum_{\nu\in\mathbb{Z}^2}\int_{D(\nu,1)} \left|f(z)e^{-\a\frac{|z|^2}{2}}\right|^q\, e^{q\a\frac{|z|^2}{2}}\,d\mu(z)
	\\ & \lesssim \sum_{\nu\in\mathbb{Z}^2}\int_{D(\nu,1)}  \left(\frac{\int_{D(z,1)}|f(u)|^p e^{-p\a\frac{|u|^2}{2}}\om(u)\,dA(u)}
	{\om(D(z,1))}\right)^{\frac{q}{p}}
	e^{q\a\frac{|z|^2}{2}}\,d\mu(z).
	\end{split}\end{equation*}
	
	By Lemma \ref{le:isra34} (see also Remark \ref{rem:discoscomparables}),  Lemma \ref{le:2} and the fact that $D(z,1)\subset D(\nu,2)$ for $z\in D(\nu,1)$, we have
		\begin{equation*}\begin{split}
	&\int_{\C} |f(z)|^q\,d\mu(z)\\
 & \lesssim \sum_{\nu\in\mathbb{Z}^2} \left(\frac{\int_{D(\nu,2)}|f(u)|^p e^{-p\a\frac{|u|^2}{2}}\om(u)\,dA(u)}
	{\om(D(\nu,1))}\right)^{\frac{q}{p}} \int_{D(\nu,1)}
	e^{q\a\frac{|z|^2}{2}}\,d\mu(z)
	\\ & \lesssim G \sum_{\nu\in\mathbb{Z}^2}
	\left(\int_{D(\nu,2)}|f(u)|^p
	e^{-p\a\frac{|u|^2}{2}}\om(u)\,dA(u)\right)^{\frac{q}{p}}.
	\end{split}\end{equation*}
	Finally, using Minkowski's inequality and the fact that
	$\{D(\nu,2)\}_{\nu\in\mathbb{Z}^2}$ is a covering of $\C$ which
	overlaps finitely many times, it follows that
	$$ \|f\|^q_{L^q(\mu)}\lesssim  G  \|f\|^q_{\Fpaom}, \qquad f\in H(\C),$$
	which implies that $\|I_d\|_{\Fpaom\to L^q(\mu)}^q\lesssim G$ and
	finishes the proof.
\end{proof}

\subsection{Case $q< p$}

\begin{theorem}\label{carlesonqmenorp}
	Let be $\a\in (0,\infty)$, $\om\in\Ainfty$ and $\mu$ a positive
	Borel measure on $\C$. For $0<q<p<\infty$, the following conditions
	are equivalent;
	\begin{enumerate}
		\item \label{carlesonqmenorp1}$\mu$ is a $q$-Carleson measure for $\Fpaom$;
		\item \label{carlesonqmenorp2}The function
		$$H(u)= \frac{\int_{D(u,1)}
			e^{\frac{q\a |z|^2}{2}}\,d\mu(z)}{\om\left(D(u,1)\right)} \in
		L^{\frac{p}{p-q}}(\mathbb{C},\om).$$
	\end{enumerate}
	Moreover,
	\begin{equation*}
	\|I_d\|_{\Fpaom\to L^q(\mu)}^q\asymp
	\|H\|_{L^{p/(p-q)}(\mathbb{C},\om)}.
	\end{equation*}
\end{theorem}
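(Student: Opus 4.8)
The plan is to reduce the problem to the classical duality between Carleson measures and Toeplitz-type operators, exploiting the $A_\infty$-machinery already developed. First I would establish the \emph{necessity} of the condition, namely that $\mu$ being a $q$-Carleson measure for $\Fpaom$ forces $H\in L^{p/(p-q)}(\C,\om)$. The natural device is to test the embedding against the random/atomic families built from reproducing kernels. Concretely, given a sequence $\{a_\nu\}_{\nu\in r\Z^2}\in\ell^p$, Proposition~\ref{pr:kerneldeco} produces
$f=\sum_\nu a_\nu K_\nu/\|K_\nu\|_{\Fpaom}\in\Fpaom$ with $\|f\|_{\Fpaom}\lesssim\|\{a_\nu\}\|_{\ell^p}$. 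Applying the Carleson embedding to $f$, using the pointwise lower bound $|K_\nu(z)|\gtrsim e^{\a|z|^2/2}$ on $D(\nu,1)$ together with Proposition~\ref{pr:kernelestimate}, and then summing over a lattice, I expect the localized masses $\int_{D(\nu,1)}e^{q\a|z|^2/2}\,d\mu(z)$ to be controlled. The standard trick here is to introduce a positive sign/ $\pm1$ (or a Khinchine-type) argument to linearize the $\ell^p$ norm, so that duality against $\ell^{p/(p-q)}$ converts the embedding bound into exactly
$\sum_\nu\left(H_\nu\right)^{p/(p-q)}\om(D(\nu,1))\lesssim\|I_d\|^{qp/(p-q)}$, where $H_\nu$ is the discretized version of $H$. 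Replacing sums over the lattice by integrals via \eqref{eq:discoscomparables} and Lemma~\ref{le:isra34} then gives $\|H\|_{L^{p/(p-q)}(\C,\om)}\lesssim\|I_d\|^q_{\Fpaom\to L^q(\mu)}$.

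For the \emph{sufficiency} direction I would follow the by-now standard overlapping-disc discretization used in the proof of Theorem~\ref{carlesonqmayorp}. Covering $\C$ by $\{D(\nu,1)\}_{\nu\in\Z^2}$ and applying the subharmonicity estimate Lemma~\ref{le:1}, I bound
\[
\int_\C|f|^q\,d\mu\lesssim\sum_{\nu}\left(\frac{\int_{D(\nu,2)}|f(u)|^pe^{-p\a|u|^2/2}\om(u)\,dA(u)}{\om(D(\nu,1))}\right)^{q/p}\int_{D(\nu,1)}e^{q\a|z|^2/2}\,d\mu(z).
\]
Writing $b_\nu=\int_{D(\nu,2)}|f|^pe^{-p\a|u|^2/2}\om\,dA$ and recognizing the second factor as $H_\nu\,\om(D(\nu,1))$, the sum becomes $\sum_\nu b_\nu^{q/p}H_\nu\,\om(D(\nu,1))$. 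Since $q/p<1$, Hölder's inequality with exponents $p/q$ and $p/(p-q)$ gives
\[
\sum_\nu b_\nu^{q/p}H_\nu\,\om(D(\nu,1))\le\left(\sum_\nu b_\nu\right)^{q/p}\left(\sum_\nu H_\nu^{p/(p-q)}\om(D(\nu,1))\right)^{(p-q)/p}.
\]
Finite overlap of $\{D(\nu,2)\}$ bounds $\sum_\nu b_\nu\lesssim\|f\|^p_{\Fpaom}$, and the second factor is comparable to $\|H\|^q_{L^{p/(p-q)}(\C,\om)}$ by the discretization lemmas. This yields $\|f\|^q_{L^q(\mu)}\lesssim\|H\|^q_{L^{p/(p-q)}(\C,\om)}\|f\|^q_{\Fpaom}$, i.e.\ the upper bound for the embedding norm.

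The interchangeability of lattice sums and integrals, $\sum_\nu H_\nu^{s}\om(D(\nu,1))\asymp\|H\|_{L^s(\C,\om)}^s$, rests on Lemma~\ref{le:isra34} and Remark~\ref{rem:discoscomparables}, which guarantee $\om(D(\nu,1))\asymp\om(D(z,1))$ for $z\in D(\nu,1)$ and the finite-overlap comparison of the discretized and continuous integrals; this step is routine given the $A_\infty$ geometry but must be handled carefully for both $H$ and its discretization. The main obstacle I anticipate is the \emph{necessity} half: producing a single test function (or family) whose $\Fpaom$-norm is controlled by an $\ell^p$ sequence while its $L^q(\mu)$-mass simultaneously detects the full $L^{p/(p-q)}(\C,\om)$-size of $H$. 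The clean way is to choose the coefficients $a_\nu$ dual-optimally, i.e.\ proportional to a power of $H_\nu$, so that applying the embedding to the corresponding kernel superposition and optimizing reproduces the $L^{p/(p-q)}$ norm exactly; verifying that the kernel cross-terms do not spoil the lower bound (again via the off-diagonal decay of $|K_\nu(z)|e^{-\a|z|^2/2}$ used in Proposition~\ref{pr:kerneldeco}) is where the real work lies.
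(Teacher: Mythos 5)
Your plan follows essentially the same route as the paper, which is Luecking's method: for necessity, Rademacher/Khinchine averaging of kernel superpositions built from Proposition~\ref{pr:kerneldeco} followed by the duality $(\ell^{p/q})^{\star}=\ell^{p/(p-q)}$, exactly as in the paper; for sufficiency, the subharmonicity estimate of Lemma~\ref{le:1} plus H\"older with exponents $p/q$ and $p/(p-q)$. The only real difference is that the paper runs the sufficiency half continuously (apply Lemma~\ref{le:1} with exponent $q$, use Fubini to move the $d\mu$-mass onto the disc $D(u,1)$, then H\"older on the resulting integral against $\om\,dA$), whereas you discretize over the lattice first; both work. One bookkeeping slip to fix: from your first display the summand is $\bigl(b_\nu/\om(D(\nu,1))\bigr)^{q/p}H_\nu\,\om(D(\nu,1))=b_\nu^{q/p}H_\nu\,\om(D(\nu,1))^{(p-q)/p}$, not $b_\nu^{q/p}H_\nu\,\om(D(\nu,1))$ as you wrote; the H\"older inequality you then state is false for the latter sum (test a single term with $\om(D(\nu,1))>1$) but is exactly correct for the former, so the argument self-corrects once the weight exponent is tracked properly. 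Also, the ``real work'' you anticipate in the necessity half (controlling kernel cross-terms) is precisely what the Khinchine average disposes of, since $\int_0^1|G_t(z)|^q\,dt\asymp\bigl(\sum_\nu|a_\nu|^2|K_\nu(z)|^2/\|K_\nu\|^2_{\Fpaom}\bigr)^{q/2}$ is bounded below by its diagonal restriction to $D(\nu,2)$.
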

\begin{proof} This proof uses ideas from \cite[Theorem~1]{L1}.
	Assume that \eqref{carlesonqmenorp2} holds. Then, by  Lemmas~\ref{le:1} and  \ref{le:isra34} (see also Remark \ref{rem:discoscomparables}), the equivalence
	\eqref{eq:discoscomparables} and H\"older's inequality, we obtain
	\begin{equation*}\begin{split}
	\int_{\C}|f(z)|^q\,d\mu(z) & \lesssim
	\int_{\C}\left[\frac{\int_{D(z,1)} |f(u)|^qe^{\frac{-q\a
				|u|^2}{2}}\om(u)\,dA(u)}{\om\left(D(z,2)\right)}\right] e^{\frac{q\a
			|z|^2}{2}}\,d\mu(z)
	\\ & \lesssim \int_{\C} |f(u)|^qe^{\frac{-q\a
			|u|^2}{2}}\left[ \frac{\int_{D(u,1)} e^{\frac{q\a
				|z|^2}{2}}\,d\mu(z)}{\om\left(D(u,1)\right)} \right] \om(u)\,dA(u)
	\\ & \le \|f\|^q_{\Fpaom} \int_{\C}\left[ \frac{\int_{D(u,1)} e^{\frac{q\a
				|z|^2}{2}}\,d\mu(z)}{\om\left(D(u,1)\right)} \right]^\frac{p}{p-q}
	\om(u)\,dA(u),
	\end{split}\end{equation*}
	which gives (i) and the inequality $$\|I_d\|_{\Fpaom\to
		L^q(\mu)}^q\lesssim \int_{\C}\left[ \frac{\int_{D(u,1)} e^{\frac{q\a
				|z|^2}{2}}\,d\mu(z)}{\om\left(D(u,1)\right)} \right]^\frac{p}{p-q}
	\om(u)\,dA(u).$$
	\par Reciprocally,
	assume that (i) holds and
let us consider the
	functions $$G_t(z)=\sum_{\nu\in \Z^2} a_\nu R_{\nu}(t)
	\frac{K_{\nu}(z)}{\| K_\nu\|_{\Fpaom}}$$ where  $R_ \nu(t)$ is a
	sequence of Rademacher functions (see page $336$ of \cite{L1}, or
	Appendix A of \cite{Duren}). So, using $G_t$ as test functions,
	applying Khinchine's inequality and using
	Proposition~\ref{pr:kerneldeco}, we deduce that
	\begin{equation}\begin{split}\label{eq:15}
	\int _{\C} \left( \sum_{\nu\in \Z^2}
	|a_\nu|^2\frac{|K_{\nu}(z)|^2}{\|K_\nu\|^2_{\Fpaom}}
	\right)^{\frac{q}{2}}\,d\mu(z)  & \asymp \int_{\C}\int_0^1 \left|
	G_t(z)\right|^q\,dt \,d\mu(z)
	\\ & \lesssim  \|I_d\|_{\Fpaom\to
		L^q(\mu)}^q \| a_\nu\|^q_{l^p}.
	\end{split}\end{equation}
	Next, bearing in mind that
	$\left\{D\left(\nu,2\right)\right\}_{\nu\in \Z^2}$ is a  covering of
	$\C$ which overlaps finitely many times,  and using
	Proposition~\ref{pr:kernelestimate} it follows that
	\begin{equation*}\begin{split}
	\int _{\C} &\left( \sum_{\nu\in r\Z^2}
	|a_\nu|^2\frac{|K_{\nu}(z)|^2}{\|K_\nu\|^2_{\Fpaom}}
	\right)^{\frac{q}{2}}\,d\mu(z)\\
	& \ge \int _{\C} \left( \sum_{\nu\in r\Z^2}
	|a_\nu|^2\frac{|K_{\nu}(z)|^2}{\|K_\nu\|^2_{\Fpaom}}\chi_{D\left(\nu,2\right)}(z)
	\right)^{\frac{q}{2}}\,d\mu(z)
	\\ & \gtrsim \int _{\C} \sum_{\nu\in \Z^2}
	|a_\nu|^q\frac{|K_{\nu}(z)|^q}{\|K_\nu\|^q_{\Fpaom}}\chi_{D\left(\nu,2\right)}(z)
	\,d\mu(z)
	\\ & =  \sum_{\nu\in \Z^2}
	\frac{|a_\nu|^q}{\|K_\nu\|^q_{\Fpaom}}\int _{D\left(\nu,2\right)}
	|K_{\nu}(z)|^q\,d\mu(z)
	\\ & \asymp \sum_{\nu\in \Z^2}
	\frac{|a_\nu|^q e^{-\frac{q\a
				|\nu|^2}{2}}}{\left(\om\left(D(\nu,1)\right)
		\right)^{\frac{q}{p}}}\int _{D\left(\nu,2\right)}
	|K_{\nu}(z)|^q\,d\mu(z)
	\\ &
	\asymp \sum_{\nu\in \Z^2}
	\frac{|a_\nu|^q }{\left(\om\left(D(\nu,1)\right)
		\right)^{\frac{q}{p}}}\int _{D\left(\nu,2\right)} e^{\frac{q\a
			|z|^2}{2}}\,d\mu(z).
	\end{split}\end{equation*}
	
	These estimates together with \eqref{eq:15} and the classical duality relation
	$(l^{\frac{p}{q}})^\star\simeq
	l^{\left(\frac{p}{q}\right)'}=l^{\frac{p}{p-q}}$, $p>q$, gives that
	\begin{equation*}\begin{split}
	\sum_{\nu\in \Z^2} \left(\frac{\int _{D\left(\nu,2\right)}
		e^{\frac{q\a |z|^2}{2}}\,d\mu(z) }{\om\left(D(\nu,1)\right)}
	\right)^{\frac{p}{p-q}} \om\left(D(\nu,1)\right)
	 & =\sum_{\nu\in \Z^2}
	\left(\frac{\int _{D\left(\nu,2\right)} e^{\frac{q\a
				|z|^2}{2}}\,d\mu(z) }{\left(\om\left(D(\nu,1)\right)
		\right)^{\frac{q}{p}}}\right)^{\frac{p}{p-q}}\\ &\lesssim
	\|I_d\|_{\Fpaom\to L^q(\mu)}^q.
	\end{split}\end{equation*}
	So, bearing in mind Remark~\ref{rem:discoscomparables}
	\begin{equation*}\begin{split}
	& \int_{\C}\left[ \frac{\int_{D(u,1)} e^{\frac{q\a
				|z|^2}{2}}\,d\mu(z)}{\om\left(D(u,1)\right)}
	\right]^\frac{p}{p-q}\,\om(u)\,dA(u)
	\\ & \lesssim \sum_{\nu\in \Z^2} \left[ \frac{\int_{D(\nu,2)} e^{\frac{q\a
				|z|^2}{2}}\,d\mu(z)}{\om\left(D(\nu,1)\right)} \right]^\frac{p}{p-q}
	\om\left(D(\nu,1)\right)\lesssim \|I_d\|_{\Fpaom\to L^q(\mu)}^q.
	\end{split}\end{equation*}
	This finishes the proof.
\end{proof}

\begin{Pf}{\em{Theorem~\ref{th:differentiation}.}}
	\par The case $n=0$ follows from Theorem~\ref{carlesonqmayorp} and
	Theorem~\ref{carlesonqmenorp}. Next, by Lemma~\ref{le:weightdistor}
	$$\om\in\Ainfty \Leftrightarrow
	\om_{\gamma}(z)=\frac{\om(z)}{(1+|z|)^{\gamma}}\in\Ainfty.$$
	So  the equivalece (ii)$\Leftrightarrow$(iii)  for   $p\le q$ follows from Theorem~\ref{carlesonqmayorp}
	and the same equivalence for $q<p$ follows from
	Theorem~\ref{carlesonqmenorp}.
	\par Now let us prove (i)$\Leftrightarrow$(ii). First, assume that $n\in\N$ and $I_d:
	\mathcal{F}^p_{\a,\om_{np}}\to L^q(\mu)$ is bounded, that is
	$$\| f\|_{L^q(\mu)}\le \|I_d\|_{\mathcal{F}^p_{\a,\om_{np}}\to L^q(\mu)}\|
	f\|_{\mathcal{F}^p_{\a,\om_{np}}},\quad f\in H(\C).$$ Then,
	applying the above inequality to $f^{(n)}$, for each $f\in H(\C)$, and
	using Theorem~\ref{th:LPformula} we deduce that
	\begin{equation*}\begin{split}
	\| f^{(n)}\|_{L^q(\mu)} & \le
	\|I_d\|_{\mathcal{F}^p_{\a,\om_{np}}\to L^q(\mu)}
	\|f^{(n)}\|_{\mathcal{F}^p_{\a,\om_{np}}}
	\\ & \le \|I_d\|_{\mathcal{F}^p_{\a,\om_{np}}\to L^q(\mu)} \left(
	\sum_{k=0}^{n-1}|f^{(k)}(0)|^p +
	\|f^{(n)}\|^p_{\mathcal{F}^p_{\a,\om_{np}}}\right)^{1/p}
	\\ & \asymp \|I_d\|_{\mathcal{F}^p_{\a,\om_{np}}\to
		L^q(\mu)}\|f\|_{\Fpaom},
	\end{split}\end{equation*}
	that is $D^{(n)}:\,\Fpaom\to L^q(\mu)$ is bounded and $\|
	D^{(n)}\|_{\Fpaom\to L^q(\mu)}\lesssim
	\|I_d\|_{\mathcal{F}^p_{\a,\om_{np}}\to L^q(\mu)}.$
	\par Reciprocally, assume that $D^{(n)}:\,\Fpaom\to L^q(\mu)$ is
	bounded, that is,
	$$\| f^{(n)}\|_{L^q(\mu)}\le \|
	D^{(n)}\|_{\Fpaom\to L^q(\mu)} \|f\|_{\Fpaom},\quad f\in H(\C).$$
	Then, by Theorem~\ref{th:LPformula},
	$$\| f^{(n)}\|_{L^q(\mu)}\le \|
	D^{(n)}\|_{\Fpaom\to L^q(\mu)} \left( \sum_{k=0}^{n-1}|f^{(k)}(0)|^p
	+ \|f^{(n)}\|^p_{\mathcal{F}^p_{\a,\om_{np}}}\right)^{1/p},\quad
	f\in H(\C).$$ Now, replacing in the above inequality $f$ by
	$D^{(-n)}(f)$, it follows that
	$$\| f\|_{L^q(\mu)}\le \|
	D^{(n)}\|_{\Fpaom\to L^q(\mu)}
	\|f\|_{\mathcal{F}^p_{\a,\om_{np}}},\quad f\in H(\C).$$ Therefore,
	$I_d: \mathcal{F}^p_{\a,\om_{np}}\to L^q(\mu)$ is bounded and $$
	\|I_d\|_{\mathcal{F}^p_{\a,\om_{np}}\to L^q(\mu)} \|\lesssim
	\|D^{(n)}\|_{\Fpaom\to L^q(\mu)}.$$
Consequently, we have already proved (i)$\Leftrightarrow$(ii)$\Leftrightarrow$(iii),
as well as \eqref{normidentityfs} and \eqref{normidentityfs2}, for any $n\in\N\cup\{0\}$.
	\medskip
	\par Second, assume that $n$ is a negative integer and
	$D^{(n)}:\,\Fpaom\to L^q(\mu)$ is bounded, that is,
	$$\| D^{(n)}(f)\|_{L^q(\mu)}\le \|
	D^{(n)}\|_{\Fpaom\to L^q(\mu)} \|f\|_{\Fpaom},\quad f\in H(\C).$$
	Then, applying the above inequality to $f^{(-n)}$  and using
	Theorem~\ref{th:LPformula} (with $\om_{np}$) it follows that
	\begin{equation*}\begin{split}
	\| f\|_{L^q(\mu)} &
	\lesssim \left(\| D^{(n)}\|_{\Fpaom\to L^q(\mu)}+C_{\mu,n} \right)
	\left( \sum_{k=0}^{-n-1}|f^{(k)}(0)|^p +
	\|f^{(-n)}\|^p_{\mathcal{F}^p_{\a,\om}}\right)^{1/p}
	\\ & \asymp  \left(\| D^{(n)}\|_{\Fpaom\to L^q(\mu)}+C_{\mu,n} \right)\|f\|_{\mathcal{F}^p_{\a,\om_{np}}}.
	\end{split}\end{equation*}
	So, $I_d: \mathcal{F}^p_{\a,\om_{np}}\to L^q(\mu)$ is bounded and $$
	\|I_d\|_{\mathcal{F}^p_{\a,\om_{np}}\to L^q(\mu)} \lesssim
	\|D^{(n)}\|_{\Fpaom\to L^q(\mu)}+C_{\mu,n}.$$
	\par Finally, assume that $I_d: \mathcal{F}^p_{\a,\om_{np}}\to L^q(\mu)$ is
	bounded, that is,
	$$\| f\|_{L^q(\mu)}\le \|I_d\|_{\mathcal{F}^p_{\a,\om_{np}}\to L^q(\mu)}\|
	f\|_{\mathcal{F}^p_{\a,\om_{np}}},\quad f\in H(\C).$$ Then,
	replacing in the above inequality $f$ by $D^{(n)}(f)$ and applying
	Theorem~\ref{th:LPformula} (with $\om_{np}$), it follows that
	\begin{equation*}\begin{split}
	\| D^{(n)}(f)\|_{L^q(\mu)}\le
	\|I_d\|_{\mathcal{F}^p_{\a,\om_{np}}\to L^q(\mu)}\|
	D^{(n)}(f)\|_{\mathcal{F}^p_{\a,\om_{np}}}\asymp
	\|I_d\|_{\mathcal{F}^p_{\a,\om_{np}}\to L^q(\mu)}
	\|f\|_{\mathcal{F}^p_{\a,\om}}.
	\end{split}\end{equation*}
	So, $D^{(n)}:\,\Fpaom\to L^q(\mu)$ is bounded and $\|
	D^{(n)}\|_{\Fpaom\to L^q(\mu)}\lesssim
	\|I_d\|_{\mathcal{F}^p_{\a,\om_{np}}\to L^q(\mu)}.$
Consequently, (i)$\Leftrightarrow$(ii)$\Leftrightarrow$(iii),
as well as \eqref{normidentityfsnegative} and \eqref{normidentityfs2negative} hold for any negative integer $n$. This finishes the proof.
\end{Pf}

\section{Pointwise multipliers and embeddings}
\label{sec:PM-E}
\par In this section, by using Theorem \ref{th:differentiation} (with $n=0$)   we provide descriptions of the space of pointwise multipliers
 $\mathcal{F}^{p}_{\alpha,\om}$ to
$\mathcal{F}^{q}_{\beta,\eta}$, where $0<q,p<\infty$ and $\om$ is a $\Ainfty$-weight.
Since the constant functions are in $\mathcal{F}^{p}_{\alpha,\omega}$, then
$$
Mult(\mathcal{F}^{p}_{\alpha,\om},
\mathcal{F}^{q}_{\beta,\eta})\subset \mathcal{F}^{q}_{\beta,\eta}.
$$
In fact, it follows from the closed graph theorem that

	$g\in Mult(\mathcal{F}^{p}_{\alpha,\om},
	\mathcal{F}^{q}_{\beta,\eta})$ if and only if
	$$d\mu_{g}(z):=|g(z)|^qe^{-q\frac{\beta}{2}|z|^2}\eta(z)dA(z)$$ is a $q$-Carleson measure for $\mathcal{F}^{p}_{\alpha,\om}$.
Thus,  Theorem~\ref{th:differentiation} yields the following result.

\begin{corollary}\label{cor:PM}
	Let $\alpha,\beta>0$, $\om\in\Ainfty$ and $\eta$ be a weight. If $g\in \mathcal{F}^{q}_{\beta,\eta}$, then
	\begin{enumerate}
		\item If $0<p\le q<\infty$,
		$g\in Mult(\mathcal{F}^{p}_{\alpha,\om}\to
		\mathcal{F}^{q}_{\beta,\eta})$ if and only if there exists $C>0$ such that
		$$
		G(u):=\frac{1}{\omega(D(u,1))}\int_{D(u,1)} |g(z)|^qe^{-q\frac{\beta-\alpha}2|z|^2}\eta(z)dA(z)\le C \om(D(u,1))^{(q-p)/p},
		$$
		for any $u\in\C$.
		\item If $0<q< p<\infty$,
		$g\in Mult(\mathcal{F}^{p}_{\alpha,\om},
		\mathcal{F}^{q}_{\beta,\eta})$
		if and only if
		$G\in L^{p/(p-q)}(\C,\om)$.
	\end{enumerate}
\end{corollary}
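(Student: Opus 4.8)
The plan is to reduce everything to the Carleson-measure characterizations already obtained in Theorem~\ref{th:differentiation} with $n=0$, namely Theorems~\ref{carlesonqmayorp} and~\ref{carlesonqmenorp}. As recalled immediately before the statement, the closed graph theorem gives that $g\in Mult(\mathcal{F}^{p}_{\alpha,\om},\mathcal{F}^{q}_{\beta,\eta})$ if and only if the positive Borel measure
$$d\mu_{g}(z)=|g(z)|^qe^{-q\frac{\beta}{2}|z|^2}\eta(z)\,dA(z)$$
is a $q$-Carleson measure for $\mathcal{F}^{p}_{\alpha,\om}$. Since Theorems~\ref{carlesonqmayorp} and~\ref{carlesonqmenorp} apply to an arbitrary positive Borel measure, it only remains to translate the geometric conditions appearing there into conditions on $g$ and $\eta$.

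The single computation on which both cases rest is the identity
$$\int_{D(u,1)}e^{\alpha\frac{q}{2}|z|^2}\,d\mu_g(z)=\int_{D(u,1)}|g(z)|^qe^{-q\frac{\beta-\alpha}{2}|z|^2}\eta(z)\,dA(z)=G(u)\,\om(D(u,1)),$$
obtained by absorbing the kernel factor $e^{\alpha\frac{q}{2}|z|^2}$ that multiplies $d\mu$ in both theorems into the weight $e^{-q\frac{\beta}{2}|z|^2}$ defining $\mu_g$, and then using the very definition of $G$. First I would record this identity; the only point demanding care is the correct bookkeeping of the exponential factors, in particular that the terms $\alpha q|z|^2/2$ and $-\beta q|z|^2/2$ combine to the factor $e^{-q(\beta-\alpha)|z|^2/2}$ occurring in $G$.

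For $0<p\le q<\infty$, the characterizing condition of Theorem~\ref{carlesonqmayorp} reads $\int_{D(u,1)}e^{\alpha\frac{q}{2}|z|^2}\,d\mu_g(z)\lesssim \om(D(u,1))^{q/p}$; substituting the identity above, this becomes $G(u)\,\om(D(u,1))\lesssim\om(D(u,1))^{q/p}$, that is $G(u)\lesssim\om(D(u,1))^{(q-p)/p}$, which is~(i). For $0<q<p<\infty$, the function $H$ of Theorem~\ref{carlesonqmenorp} satisfies $H(u)=\om(D(u,1))^{-1}\int_{D(u,1)}e^{\alpha\frac{q}{2}|z|^2}\,d\mu_g(z)=G(u)$; hence $\mu_g$ is a $q$-Carleson measure for $\mathcal{F}^{p}_{\alpha,\om}$ precisely when $G\in L^{p/(p-q)}(\mathbb{C},\om)$, which is~(ii).

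There is no genuinely hard step here: all the analytic content is already contained in Theorem~\ref{th:differentiation}, and the present corollary is merely its $n=0$ case transcribed into the language of pointwise multipliers. The only things to be attentive to are the arithmetic of the exponents in the displayed identity and the observation that $\mu_g$ is automatically a finite positive Borel measure, since $\mu_g(\mathbb{C})=\|g\|^q_{\mathcal{F}^{q}_{\beta,\eta}}<\infty$ by the standing hypothesis $g\in\mathcal{F}^{q}_{\beta,\eta}$, so that the cited theorems may be invoked without further restriction.
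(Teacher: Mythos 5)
Your proposal is correct and follows exactly the paper's route: the paper also reduces the statement, via the closed graph theorem, to the $q$-Carleson measure characterizations of Theorem~\ref{th:differentiation} with $n=0$ (i.e.\ Theorems~\ref{carlesonqmayorp} and~\ref{carlesonqmenorp}) applied to $d\mu_g$, and the identity $\int_{D(u,1)}e^{\a\frac{q}{2}|z|^2}\,d\mu_g(z)=G(u)\,\om(D(u,1))$ is precisely the translation the paper leaves implicit. Your additional remark that $\mu_g$ is finite because $g\in \mathcal{F}^{q}_{\beta,\eta}$ is a harmless and correct observation.
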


In particular, taking $g=1$ in the above corollary we obtain the next result.

\begin{corollary}\label{cor:E}
	Let $\alpha,\beta>0$, $\om\in\Ainfty$ and $\eta$ be a weight. Then,
	\begin{enumerate}
		\item If $0<p\le q<\infty$,
		$\mathcal{F}^{p}_{\alpha,\om}\subset
		\mathcal{F}^{q}_{\beta,\eta}$ if and only if there exists $C>0$ such that
		$$
		G_1(u):=\frac{1}{\omega(D(u,1))}\int_{D(u,1)} e^{-q\frac{\beta-\alpha}2|z|^2}\eta(z)dA(z)\le C\om(D(u,1))^{(q-p)/p},
		$$
		for any $u\in\C$.
		\item If $0<q< p<\infty$,
		$\mathcal{F}^{p}_{\alpha,\om}\subset
		\mathcal{F}^{q}_{\beta,\eta}$ if and only if
		$G_1\in L^{p/(p-q)}(\C,\om)$.
	\end{enumerate}
\end{corollary}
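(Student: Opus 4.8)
The plan is to specialize Corollary~\ref{cor:PM} to the constant function $g\equiv 1$, for which $|g(z)|^q\equiv 1$ and the function $G$ appearing there reduces precisely to $G_1$. First I would record that constants belong to $\Fpaom$: since $\om\in\Ainfty$, Lemma~\ref{le:2} gives $\|1\|^p_{\Fpaom}=\int_{\C}e^{-p\frac{\a}{2}|z|^2}\om(z)\,dA(z)<\infty$, so $1\in\Fpaom$. Consequently the inclusion $\Fpaom\subset\mathcal{F}^q_{\beta,\eta}$ is meaningful and amounts to asking that multiplication by $1$, i.e.\ the identity embedding, map $\Fpaom$ into $\mathcal{F}^q_{\beta,\eta}$.

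The key observation is that the continuous inclusion $\Fpaom\hookrightarrow\mathcal{F}^q_{\beta,\eta}$ is exactly the statement that $g\equiv1$ is a pointwise multiplier from $\Fpaom$ to $\mathcal{F}^q_{\beta,\eta}$. Indeed, as recorded in the paragraph preceding Corollary~\ref{cor:PM}, $g$ is such a multiplier if and only if $d\mu_g(z)=|g(z)|^qe^{-q\frac{\beta}{2}|z|^2}\eta(z)\,dA(z)$ is a $q$-Carleson measure for $\Fpaom$; taking $g\equiv1$ this measure is $d\mu_1(z)=e^{-q\frac{\beta}{2}|z|^2}\eta(z)\,dA(z)$, and $\int_{\C}|f|^q\,d\mu_1=\|f\|^q_{\mathcal{F}^q_{\beta,\eta}}$. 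Hence $d\mu_1$ is $q$-Carleson for $\Fpaom$ precisely when $\|f\|_{\mathcal{F}^q_{\beta,\eta}}\lesssim\|f\|_{\Fpaom}$ for every $f\in\Fpaom$, which by the closed graph theorem is equivalent to the set inclusion $\Fpaom\subset\mathcal{F}^q_{\beta,\eta}$. Moreover, substituting $d\mu_1$ into the weight factor $e^{\a\frac{q}{2}|z|^2}$ of the Carleson characterization produces exactly the exponent $e^{-q\frac{\beta-\a}{2}|z|^2}$ that defines $G_1$, which is the bookkeeping that makes $G$ collapse to $G_1$.

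With this identification I would simply invoke Corollary~\ref{cor:PM} with $g\equiv1$: in the range $0<p\le q<\infty$ the multiplier condition reads $G_1(u)\le C\,\om(D(u,1))^{(q-p)/p}$ for all $u\in\C$, while in the range $0<q<p<\infty$ it reads $G_1\in L^{p/(p-q)}(\C,\om)$, which are exactly the two asserted characterizations. The only subtlety---that the hypothesis $g\in\mathcal{F}^q_{\beta,\eta}$ of Corollary~\ref{cor:PM} presupposes $1\in\mathcal{F}^q_{\beta,\eta}$---is handled by going through the Carleson-measure route of the previous paragraph directly: for the sufficiency direction the $G_1$-condition forces every $f\in\Fpaom$, and in particular $f\equiv1$, to lie in $\mathcal{F}^q_{\beta,\eta}$, while for the necessity direction the assumed inclusion already yields $1\in\Fpaom\subset\mathcal{F}^q_{\beta,\eta}$. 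There is essentially no analytic obstacle here: once Theorem~\ref{th:differentiation} (with $n=0$) and Corollary~\ref{cor:PM} are in hand, the result is a clean specialization, the only point requiring care being the reduction of $G$ to $G_1$ and the harmless membership $1\in\Fpaom$.
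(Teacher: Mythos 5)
Your proposal is correct and follows exactly the paper's route: the paper obtains Corollary~\ref{cor:E} precisely by taking $g\equiv 1$ in Corollary~\ref{cor:PM}, identifying the inclusion with the statement that $1$ is a pointwise multiplier and observing that $G$ collapses to $G_1$. Your extra remark handling the hypothesis $1\in\mathcal{F}^q_{\beta,\eta}$ via the Carleson-measure formulation is a sensible small refinement but does not change the argument.
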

\par Despite the conditions in Corollary~\ref{cor:PM} are useful in praxis,
 let us see they are particularly neat when both weights coincide.

\begin{proposition}\label{pr:mult}
	If $\beta<\alpha$, then
	$ Mult(\mathcal{F}^{p}_{\alpha,\om},
	\mathcal{F}^{q}_{\beta,\om})={0}$.
\end{proposition}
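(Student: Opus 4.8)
The plan is to exploit the mismatch between the Gaussian decay rate $\b$ of the target space and the faster rate $\a$ dictated by the reproducing kernels of the source space: I would test the multiplication operator $M_g f:=gf$ on the normalized kernels of $\Fpaom$ and extract from this a pointwise upper bound for $g$ that forces $g\to 0$ at infinity, after which Liouville's theorem finishes the job. Since every constant lies in $\Fpaom$, a multiplier $g$ belongs to $\mathcal{F}^{q}_{\beta,\om}$, and by the closed graph theorem (invoked at the start of this section, just before Corollary~\ref{cor:PM}) the operator $M_g:\Fpaom\to\mathcal{F}^{q}_{\beta,\om}$ is bounded; write $\|M_g\|$ for its norm. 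As the kernels $K_a(z)=e^{\a\overline{a}z}$ satisfy $\|K_a\|_{\Fpaom}\asymp e^{\a|a|^2/2}\om(D(a,1))^{1/p}$ by Proposition~\ref{pr:kernelestimate}, the normalized kernels have unit norm, so $\|gK_a\|_{\mathcal{F}^{q}_{\beta,\om}}\le \|M_g\|\,\|K_a\|_{\Fpaom}$ for every $a\in\C$.

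First I would bound $\|gK_a\|_{\mathcal{F}^{q}_{\beta,\om}}^q$ from below by integrating only over $D(a,1)$. Using $|K_a(z)|^q=e^{\frac{q\a}2(|a|^2+|z|^2-|z-a|^2)}$ and $e^{-\frac{q\a}2|z-a|^2}\asymp 1$ on $D(a,1)$, the integrand carries the factor $e^{\frac{q\a}2|a|^2}e^{\frac{q(\a-\b)}2|z|^2}$; since $\a>\b$ and $|z|\ge|a|-1$ on $D(a,1)$, this factor is at least a constant times $e^{\frac{q\a}2|a|^2}e^{\frac{q(\a-\b)}2(|a|-1)^2}$. Applying Lemma~\ref{le:1} with exponent $q$ and $\a=0$ (subharmonicity) gives $\int_{D(a,1)}|g|^q\om\,dA\gtrsim |g(a)|^q\om(D(a,1))$, whence
\begin{equation*}
\|gK_a\|_{\mathcal{F}^{q}_{\beta,\om}}^q\gtrsim e^{\frac{q\a}2|a|^2}\,e^{\frac{q(\a-\b)}2(|a|-1)^2}\,|g(a)|^q\,\om(D(a,1)).
\end{equation*}
Dividing by $\|K_a\|_{\Fpaom}^q\asymp e^{\frac{q\a}2|a|^2}\om(D(a,1))^{q/p}$ and using $\|gK_a\|_{\mathcal{F}^{q}_{\beta,\om}}^q\le \|M_g\|^q\|K_a\|_{\Fpaom}^q$ yields the key pointwise estimate
\begin{equation*}
|g(a)|^q\lesssim \|M_g\|^q\,e^{-\frac{q(\a-\b)}2(|a|-1)^2}\,\om(D(a,1))^{\frac qp-1},\qquad a\in\C.
\end{equation*}

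To finish I would control the weight factor $\om(D(a,1))^{q/p-1}$ by Lemma~\ref{le:isra34} together with Remark~\ref{rem:discoscomparables}: comparing $D(a,1)$ with the square centred at the nearest lattice point furnishes a constant $c>0$ with $e^{-c|a|}\lesssim \om(D(a,1))/\om(Q_r(0))\lesssim e^{c|a|}$, so $\om(D(a,1))^{q/p-1}\lesssim e^{c|\frac qp-1|\,|a|}$ irrespective of the sign of $\frac qp-1$. Because $\a>\b$, the quadratic Gaussian factor $e^{-\frac{q(\a-\b)}2(|a|-1)^2}$ dominates this linear exponential, so $g(a)\to 0$ as $|a|\to\infty$; an entire function vanishing at infinity is bounded, hence constant by Liouville, and the constant is $0$. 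Thus $g\equiv 0$ and $Mult(\Fpaom,\mathcal{F}^{q}_{\beta,\om})=\{0\}$. I expect the main obstacle to be exactly this uniform control of $\om(D(a,1))^{q/p-1}$ when the exponent is negative (the regime $q<p$), which the two-sided exponential comparison of the weight resolves. I note that the argument above is uniform and settles both regimes at once; one could instead read off Corollary~\ref{cor:PM}(i) directly when $p\le q$, but the case $q<p$ there only produces an $L^{p/(p-q)}(\om)$ condition and would still have to be converted into a pointwise estimate, so the kernel-testing route is the more economical.
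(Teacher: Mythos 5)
Your argument is correct, but it takes a genuinely different route from the paper's. The paper avoids all weight estimates by first sandwiching the weighted spaces between unweighted sup-norm Fock spaces: for $\delta\in(0,\min\{\alpha,\frac{\alpha-\beta}{2}\})$, Proposition~\ref{pr:embedding} gives $\mathcal{F}^\infty_{\a-\delta}\subset\mathcal{F}^p_{\a,\om}$ and $\mathcal{F}^q_{\b,\om}\subset\mathcal{F}^\infty_{\b+\delta}$, so the multiplier space embeds into $Mult(\mathcal{F}^\infty_{\a-\delta},\mathcal{F}^\infty_{\b+\delta})$; testing there with $f_a(z)=e^{(\a-\delta)z\overline{a}}$ immediately yields $|g(a)|\lesssim e^{(\b-\a+2\delta)|a|^2/2}$ and the conclusion follows from the maximum modulus principle. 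The weight $\om$ disappears entirely after the embedding, at the harmless cost of a $\delta$-loss in the Gaussian parameters. You instead test directly in the weighted norms with the kernels $K_a$, use Proposition~\ref{pr:kernelestimate} and the subharmonicity estimate of Lemma~\ref{le:1}, and are then forced to absorb the residual factor $\om(D(a,1))^{q/p-1}$; your appeal to Lemma~\ref{le:isra34} for the two-sided bound $e^{-c|a|}\lesssim\om(D(a,1))\lesssim e^{c|a|}$ does exactly that, since the Gaussian $e^{-\frac{q(\a-\b)}{2}(|a|-1)^2}$ dominates any $e^{c'|a|}$. Every step checks out (the restriction to $|a|\ge 1$ implicit in using $(|a|-1)^2$ is immaterial since only the behaviour at infinity matters). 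Your approach is more hands-on and yields an explicit quantitative decay rate for $g$ with the weight visible; the paper's is shorter and cleaner because the $\delta$-perturbation trick eliminates the weight before any testing is done.
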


\begin{proof}
	Let $\delta\in (0,\min\left\{\alpha, \frac{\alpha-\beta}{2}\right\})$.
	By Proposition~\ref{pr:embedding},
	$\mathcal{F}^\infty_{\a-\delta}\subset
	\mathcal{F}^p_{\a,\om}$ and
	$	\mathcal{F}^{q}_{\beta,\om}\subset \mathcal{F}^\infty_{\b+\delta}$,
	so
	$$
	Mult(\mathcal{F}^{p}_{\alpha,\om},
	\mathcal{F}^{q}_{\beta,\om})\subset
	Mult(\mathcal{F}^{\infty}_{\alpha-\delta},
	\mathcal{F}^{\infty}_{\beta+\delta}).
	$$
	
	For each $a\in\C$ consider the function $f_a(z)=e^{(\a-\delta)z\overline{a}}$.
Since
	$$
	|f_a(z)|e^{-(\a-\delta)|z|^2/2}\le e^{(\a-\delta)|a|^2/2}=\|f_a\|_{\mathcal{F}^{\infty}_{\alpha-\delta}},
	$$
	for any $g\in Mult(\mathcal{F}^{\infty}_{\alpha-\delta},
	\mathcal{F}^{\infty}_{\beta+\delta})$,
	\begin{align*}
	|g(a)|e^{(2\a-\b-3\delta)|a|^2/2}
	&=|g(a)f_a(a)|e^{-(\b+\delta)|a|^2/2}
	\le \|gf_a\|_{\mathcal{F}^{\infty}_{\beta+\delta}}\\
	&\lesssim
	\|f_a\|_{\mathcal{F}^{\infty}_{\a-\delta}} =e^{(\a-\delta)|a|^2/2},
	\end{align*}
	which gives $|g(a)|\lesssim e^{(\b-\alpha+2\delta)|a|^2/2}$.
	
	Since $\b-\alpha+2\delta<0$, the maximum modulus principle gives $g=0$.
\end{proof}

\begin{theorem}\label{thm:PMom}
	
	Let $\alpha,\beta>0$ and $\om\in\Ainfty$. Then,
	\begin{enumerate}
		\item \label{item:PMom1} If $\alpha<\beta$, then  $Mult(\mathcal{F}^{p}_{\alpha,\om},
		\mathcal{F}^{p}_{\beta,\om})=\mathcal{F}^\infty_{\beta-\alpha}$;
		\item \label{item:PMom2} 
 $Mult(\mathcal{F}^{p}_{\alpha,\om},
		\mathcal{F}^{p}_{\alpha,\om})=\C$;
\item \label{item:PMom5} If $q>p$ and $\alpha\le\beta$, then $g\in Mult(\mathcal{F}^{p}_{\alpha,\om},\mathcal{F}^{q}_{\beta,\om})$ if and only if
		$$
		|g(u)|e^{\frac{\a-\b}2|u|^2}\lesssim \om(D(u,1))^{(q-p)/{pq}};$$
		\item \label{item:PMom3} If $p>q>0$ and $\alpha<\beta$, then  $Mult(\mathcal{F}^{p}_{\alpha,\om},\mathcal{F}^{q}_{\beta,\om})=\mathcal{F}^{pq/(p-q)}_{\beta-\alpha,\om}$;
		\item \label{item:PMom4} If $p>q>0$,
 then  $Mult(\mathcal{F}^{p}_{\alpha,\om}, \mathcal{F}^{q}_{\alpha,\om})=H(\C)\cap L^{pq/(p-q)}(\C, \om)$.
		
	\end{enumerate}
\end{theorem}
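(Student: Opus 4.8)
The plan is to derive all five statements from Corollary~\ref{cor:PM} with the choice $\eta=\om$, reducing every case to an analysis of the single auxiliary function
\[
G(u)=\frac{1}{\om(D(u,1))}\int_{D(u,1)}|g(z)|^q e^{-q\frac{\beta-\alpha}{2}|z|^2}\om(z)\,dA(z),\qquad u\in\C .
\]
Writing $\phi(z):=|g(z)|^q e^{-q\frac{\beta-\alpha}{2}|z|^2}$, the quantity $G(u)$ is exactly the $\om\,dA$-average of $\phi$ over $D(u,1)$. Two ingredients drive everything. First, since $\alpha\le\beta$ in every case of the statement, the parameter $\beta-\alpha$ is $\ge 0$, so Lemma~\ref{le:1} (applied with exponent $q$, radius $t=1$ and the weight $\om\in\Ainfty$) gives the pointwise reverse bound $\phi(z)\lesssim G(z)$ for every entire $g$. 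Second, Remark~\ref{rem:discoscomparables}, i.e.\ \eqref{eq:discoscomparables}, yields $\om(D(z,1))\asymp\om(D(u,1))$ whenever $|z-u|<1$. These two facts, together with the trivial bound $\phi\le\sup\phi$, are all that is needed.

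For the sup-type cases \eqref{item:PMom1}, \eqref{item:PMom2} and \eqref{item:PMom5} we are in the regime $p\le q$, where Corollary~\ref{cor:PM}(i) says $g$ is a multiplier iff $G(u)\lesssim \om(D(u,1))^{(q-p)/p}$, that is $\int_{D(u,1)}\phi\,\om\,dA\lesssim\om(D(u,1))^{q/p}$. By the pointwise estimate $\phi(u)\lesssim G(u)$ this forces $\phi(u)\lesssim\om(D(u,1))^{(q-p)/p}$, i.e.\ $|g(u)|e^{\frac{\alpha-\beta}{2}|u|^2}\lesssim\om(D(u,1))^{(q-p)/(pq)}$, which is precisely \eqref{item:PMom5}; conversely one bounds $\int_{D(u,1)}\phi\,\om\,dA$ using $\phi(z)\lesssim\om(D(z,1))^{(q-p)/p}\asymp\om(D(u,1))^{(q-p)/p}$ on $D(u,1)$ and integrating. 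When $p=q$ the right-hand exponent vanishes: in \eqref{item:PMom1} ($\alpha<\beta$) the condition becomes $|g(u)|e^{-\frac{\beta-\alpha}{2}|u|^2}\lesssim 1$, i.e.\ $g\in\mathcal{F}^\infty_{\beta-\alpha}$ (membership in the target $\mathcal{F}^p_{\beta,\om}$ being guaranteed by Proposition~\ref{pr:embedding}\eqref{item:embedding2}); in \eqref{item:PMom2} ($\alpha=\beta$) it becomes $|g|\lesssim 1$, so $g$ is a bounded entire function and Liouville's theorem gives $g\in\C$.

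For the remaining cases \eqref{item:PMom3} and \eqref{item:PMom4} we are in the regime $q<p$, where Corollary~\ref{cor:PM}(ii) characterizes multipliers by $G\in L^{t}(\C,\om)$ with $t:=\frac{p}{p-q}>1$. The core computation is the norm equivalence
\[
\|G\|_{L^{t}(\C,\om)}^{t}\asymp\int_{\C}\phi(z)^t\,\om(z)\,dA(z),\qquad t=\tfrac{p}{p-q},\quad s:=\tfrac{pq}{p-q}=qt,
\]
where the right-hand integral equals $\int_{\C}|g(z)|^{s}e^{-s\frac{\beta-\alpha}{2}|z|^2}\om(z)\,dA(z)=\|g\|^{s}_{\mathcal{F}^{s}_{\beta-\alpha,\om}}$. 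The bound ``$\lesssim$'' comes from Jensen's inequality (since $t>1$, $G(u)^t$ is at most the $\om\,dA$-average of $\phi^t$ over $D(u,1)$), followed by Fubini and the comparability $\om(D(z,1))\asymp\om(D(u,1))$ for $u\in D(z,1)$, which collapses the resulting inner integral $\int_{D(z,1)}\frac{\om(u)}{\om(D(u,1))}\,dA(u)$ to a constant. The reverse bound ``$\gtrsim$'' is immediate from the pointwise estimate $\phi\lesssim G$ raised to the power $t$ and integrated against $\om$. This equivalence identifies the multiplier space with $\mathcal{F}^{s}_{\beta-\alpha,\om}$ when $\alpha<\beta$, giving \eqref{item:PMom3}; when $\alpha=\beta$ we have $\beta-\alpha=0$, so $\phi=|g|^q$ and the equivalence reads $\|G\|_{L^{t}(\C,\om)}^{t}\asymp\int_{\C}|g|^{s}\om\,dA$, identifying the multiplier space with $H(\C)\cap L^{s}(\C,\om)$ and giving \eqref{item:PMom4}.

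I expect the main obstacle to be the two-sided norm equivalence in \eqref{item:PMom3}--\eqref{item:PMom4}: one must justify the Jensen step (which needs $t>1$, guaranteed by $q<p$) and apply Fubini together with the disc-comparability \eqref{eq:discoscomparables} carefully, so that the weighted averaging map $\phi\mapsto G$ is seen to be bounded on $L^{t}(\C,\om)$ with a two-sided estimate. The sup-type cases are comparatively direct, the only care being the appeal to Proposition~\ref{pr:embedding}\eqref{item:embedding2} to secure membership in the target Fock space and, for \eqref{item:PMom2}, the use of Liouville's theorem.
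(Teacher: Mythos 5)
Your proposal is correct and follows essentially the same route as the paper: both directions are reduced to Corollary~\ref{cor:PM} with $\eta=\om$, with necessity coming from the pointwise bound $|g(u)|e^{\frac{\alpha-\beta}{2}|u|^2}\lesssim G(u)^{1/q}$ supplied by Lemma~\ref{le:1} (plus Liouville's theorem in case (ii)), and sufficiency from disc-comparability in the sup-type cases and from H\"older with the conjugate pair $p/(p-q)$, $p/q$ together with Fubini in the cases $q<p$. Your ``Jensen step'' is exactly the paper's ``H\"older's inequality with exponent $p/q$'', so the arguments coincide; you merely spell out the Fubini computation and the appeal to Proposition~\ref{pr:embedding} that the paper leaves implicit.
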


\begin{proof}
	Bearing in mind Corollary \ref{cor:PM}, in each case (i)-(v), it is easy to see that the condition describing $Mult(\mathcal{F}^{p}_{\alpha,\om}, \mathcal{F}^{q}_{\alpha,\om})$  
is   sufficient.
	Indeed, in order to prove this implication in  \eqref{item:PMom1}, \eqref{item:PMom2} and \eqref{item:PMom5},
	it is enough to use these conditions
 to prove that the corresponding function $G$
	satisfies the inequality in Corollary \ref{cor:PM}(i).  In  \eqref{item:PMom5},  we also use that
	$\omega(D(z,1))\asymp \omega(D(u,1))$
	for any $z\in D(u,1)$ (see Lemma \ref{le:isra34}). The analogue implications in
\eqref{item:PMom3}, \eqref{item:PMom4} follow from Corollary~\ref{cor:PM}(ii) and H\"older's inequality with exponent $p/q$.
	
	In each case (i)-(v), the
reverse direction
follows  from Corollary \ref{cor:PM} and Lemma \ref{le:1}, which provides the pointwise estimate
	$$
	|g(u)|e^{\frac{\a-\b}2|u|^2}\lesssim G(u)^{1/q},\qquad \beta\ge\alpha.
	$$
In particular, this estimate in  \eqref{item:PMom2} gives $Mult(\mathcal{F}^{p}_{\alpha,\om},
	\mathcal{F}^{p}_{\alpha,\om})\subset H(\C)\cap L^\infty$,
	which, by Liouville's theorem, coincides with $\C$.
\end{proof}

It is worth mentioning that Proposition~\ref{pr:mult} and Theorem~\ref{thm:PMom}, together with Theorem~\ref{th:LPformula}, provide descriptions
of pointwise multipliers  between Fock-Sobolev spaces induced by $\Ainfty$-weights.

\end{document}